\numberwithin{equation}{section} 
\theoremstyle{plain}
\newtheorem{thm}{Theorem}
\numberwithin{thm}{section}
\newtheorem{lemma}[thm]{Lemma}
\newtheorem{prop}[thm]{Proposition}
\newtheorem{cor}[thm]{Corollary}
\theoremstyle{definition}
\newtheorem{de}[thm]{Definition}
\newtheorem{rem}[thm]{Remark}
\theoremstyle{remark}
\newcounter{exc}
\numberwithin{exc}{subsection}
\renewenvironment{proof}[1][\textit{Proof}]{\par  
  \pushQED{\qed}%
  \normalfont \topsep6\p@\@plus6\p@\relax
  \trivlist
  \item[\hskip\labelsep
        \upshape
    #1\@addpunct{:}]\ignorespaces
}{%
  \popQED\endtrivlist\@endpefalse
}
\author{Ingmar Metzler}
\address{
	Department of Mathematics, ETH Zurich, 8092 Zurich, Switzerland
}
\email{ingmar.metzler@math.ethz.ch}
\title{Symmetric square type L--series}
\begin{document}
	\begin{abstract}
		We construct symmetric square type \(L\)-series 
		for vector-valued modular forms transforming under 
		the Weil representation associated to a discriminant form. 
		We study Hecke operators and integral representations to investigate their properties, 
		deriving functional equations and infinite product expansions. 
	\end{abstract}
	\maketitle
	\nocite{Metzler2024}
	\tableofcontents
	\section{Introduction}

Symmetric square $L$-series have already been constructed by Shimura \cite{ShimuraMF_HalfIntegralWeight}. These
were instrumental in establishing the \emph{Shimura lift} -- a pivotal correspondence
between modular forms of half-integral and integral weights. This groundbreaking
work catalysed further investigations, including Shimura's own extensions \cite{Shimura1975}
and generalisations to higher rank groups in \cite{Bump1992}. The present work addresses the
natural extension of these $L$-series to the setting of vector-valued modular
forms associated to the Weil representation. More explicitly, let $(L,\qq)$
denote a non-degenerate even lattice, $L'$ denote its dual, \(\efr_{\mu}\) a basis
of the group algebra \(\C[L'/L]\), and $k \in \tfrac{1}{2} \Z$ a half-integral
number. Further, if $\MF_{L,k}$ represents the space of $\C[L'/L]$ valued
holomorphic modular forms of weight $k$ for the associated Weil representation,
then any form~$f \in \MF_{L,k}$ has a Fourier expansion
\begin{equation}\label{eq:Intro_VVMF_Fourierexp}
  f(\tau) = \sum_{\mu \in L'/L} \sum_{m \in \Z + \qq(\mu)} a(\mu,m) e^{2 \pi i \tau m} \efr_{\mu}.
\end{equation}
In extending symmetric square $L$-series from the scalar to the vector-valued
setting, one might initially consider associating an $L$-series with each
component function of~$f$ individually, meaning for fixed $\mu \in L'/L$ in \eqref{eq:Intro_VVMF_Fourierexp}.
However, closer examination reveals that the natural analogues exhibit
intertwining of the components. For clarity of exposition, we present a special
instance of the series discussed in the main body of the paper. For an
anisotropic vector $\posvec \in L'$ and \(\overline{\posvec} \in L'/L\) its
projection, we define
\begin{equation}\label{eq:Intro_L_series}
  L_{(\overline{\posvec},\qq(\posvec))}(f,s) = \sum_{n \in \N} \frac{a(n \overline{\posvec}, n^2 \qq(\posvec))}{n^s}
\end{equation}
for $s \in \C$. These series will serve as cornerstone elements in the forthcoming
work on automorphic lifts where they appear as cycle integrals of theta
liftings. The goal of the present work is to establish several analytic
properties of symmetric square $L$-series associated to vector-valued modular
forms.

To start with existing literature, we establish bounds of Fourier coefficients
(cf.~Cor.~\ref{cor:VVMF_bound_FC_for_Lseries}), which imply absolute convergence of~$L_{\posvec}(f,s)$ with
respect to the parameter~$s$ in case $\Re(s) > k + 1/2$, thereby ensuring
holomorphicity. We achieve sharper bounds in Corollary~\ref{cor:LellNfsabsoluteconvergence}. In Subsection~\ref{ssec:Lseries_RankinSelberg} we
realise generalisations of the $L$-series appearing in~\eqref{eq:Intro_L_series} as a Rankin--Selberg
convolution and establish the convergence and analyticity of its unfolded
counterpart within specified domains (cf.\ Proposition~\ref{prop:RankinSelbergVVMFThetaposdefsplitcrossEisenstein}). More precisely, we
find the following.
\begin{thm}
  Assume $k \geq 2$, $L = \Z\posvec \oplus L_2$ for some lattice $L_{2}$ and
  $\posvec \in L$ with $\qq(\posvec) > 0$. Let $E_{L_2,k_2}(\argdot,s)$ be the
  Eisenstein series from Definition~\ref{de:VVEisensteinseriesnonholomorphic}, assume $k_2 = k - 1/2$ and write
  $\posvecprim = \tfrac{\posvec}{2\qq(\posvec)}$. Then we have the identity
  \begin{equation}
    \int_{\Fd_{\Gamma(1)}} \left\langle f , \Theta_{\Z \posvec} \otimes E_{L_2,k_2}(\argdot,s) \right\rangle \cdot \Im^k \di \mu	
    =	\frac{\Gamma(\overline{s}+k-1)}{(4 \pi)^{\overline{s}+k-1}} \cdot L_{(\overline{\posvecprim},\qq(\posvecprim))}(f,\overline{s} + k - 1)
  \end{equation}
  of holomorphic functions in $\overline{s}$ for $\Re(s) > \tfrac{5 - 2k }{4}$.
  Consequently, \(L_{(\overline{\posvecprim},\qq(\posvecprim))}(f,s)\) admits
  meromorphic continuation and a functional equation with meromorphic
  coefficients \(c(\mu,s)\):
  \begin{equation}\label{eq:fun_equ_intro}
    L_{(\overline{\posvecprim},\qq(\posvecprim))}(f,s) = \frac{1}{2} \sum_{\mu \in \Iso(L_{2}'/L_{2})} c_{\eta}(\mu,s) L_{(\overline{\posvecprim} + \mu , \qq(\posvecprim))}(f,1-k-s). 
  \end{equation}
\end{thm}

A key feature of Shimura's $L$-series is their admission of infinite product
expansions when they are attached to Hecke eigenforms. We replicate this
property in the vector-valued setting through careful analysis of Hecke
operators, building upon the work of Bruinier and Stein (cf.~\cite{BruinierStein, SteinHecke}). In the
process, a bound on eigenvalues is derived (cf.~Lemma~\ref{lem:VVMF_Kohnenbound}). We state a
simplified version of Theorem~\ref{cor:Lell_complete_product}.

\begin{thm}\label{thm:Intro_Lseries_product}
  Assume $L$ to be maximal with $2 \nmid \level(L)$ and select a simultaneous
  eigenform $f \in \CF_{L,k}$ of all $\Hophk(p^2)$ with eigenvalue $\Hev_{p}$ that
  is invariant under $\OG(L'/L)$. For an element $\posvec \in L'$ such that
  $\level(L) \cdot \qq(\posvec)$ is square free, if we denote
  $\lambda \coloneq \overline{\posvec} \in L'/L$, then the following product expansion
  holds:
  \begin{align*}
    & \sum_{n \in \N} \frac{a(n\lambda,n^2\qq(\posvec))}{n^s} \\
    =\;	&a(\lambda,\qq(\posvec)) \cdot \prod_{p \nmid \level(L)}
          \frac{1 + \delta_{p \nmid \qq(\posvec)} \frac{G_{\Df}(1)}{G_{\Df}(p)} \cdot p^{k-1-s} }{ 1 - \left( \frac{\Hev_{p}}{p^{k-1}} - (1-p^{-1}) \frac{G_{\Df}(1)}{G_{\Df}(p)} \right)  \cdot  p^{k-1-s} + p^{2(k-1-s)} } 
    \\
    &\cdot \prod_{p \mid \level(L)}
      \frac{1 + K_{L,p} 
      \begin{cases}
        \delta_{\lambda_{p} \neq 0} (1 - p^{-1}) + \delta_{\lambda_{p} = 0} \cdot \delta_{p \nmid \qq(\posvec)}  , & 2 \mid {R_{p}} \\ 
        - \delta_{\lambda_{p} = 0} \cdot p^{-1/2} \left(\frac{-\qq(\posvec)}{p}\right) \epsilon_{p}, & 2 \nmid {R_{p}}
      \end{cases} 
      \cdot p^{k-1-s} + C(\lambda_{p}) \cdot p^{2(k-1-s)}}
      {1 - \left( \frac{\Hev_{p}}{p^{k-1}} - \delta_{2 \mid {R_{p}}} (1-p^{-1}) K_{L,p} \right) p^{k-1-s} + p^{2(k-1-s)}}.
  \end{align*}
  The infinite product over primes $p \nmid \level(L)$ converges absolutely and does
  not vanish for $\Re(s) > k + 1$. Here, $G_{\Df}$ and $K_{L,p}$ are certain Gauss
  sums, $\lambda_p$ denotes the projection of $\lambda \in L'/L$ to its $p$-component,
  $C(\lambda_{p})$ is a specified integer vanishing if $\lambda_{p} = 0$, $\chi_{\Df}$ is a
  quartic character and $\epsilon_{p}$ equals $1$ or $i$. Further, the quantity $R_{p}$
  is presented in Definition~\ref{de:nk_and_R}.\end{thm}

Furthermore, we establish the non-vanishing of the remaining rational factors in
$p^{-s}$ for primes $p$ dividing $\level(L)$ in a range dependent on the
$p$-rank of $L'/L$ (cf.\ Proposition~\ref{prop:VVMF_Lseries_eigenform_extract_factor_alloddprimes}).
		
\subsection*{Acknowledgments}
I am grateful to my supervisor Jan Bruinier for his guidance and numerous
fruitful discussions exceeding the scope of this project. Also, I would like to
thank Markus Schwagenscheidt and Paul Kiefer for substantial improvements to the
manuscript. Further, I would like to extend my gratitude to Özlem Imamoglu, Jens
Funke, and Oliver Stein for helpful remarks. The author was supported by the CRC
TRR~$326$ `GAUS', project number~$444845124$, the LOEWE project `USAG', and ETH
Zurich.

	\section{Setting}

We briefly present the setting for this paper, including lattices, 
discriminant forms, the Weil representation, and elliptic modular forms. 

We let $(L,\qq)$ denote a non-degenerate even quadratic lattice of signature $(\possig,\negsig)$ and denote its bilinear form by $\bilf$ such that $\qq(x) \coloneq \bilf(x,x)/2$. 
The dual lattice $L' \subset L \otimes_{\Z} \Q$ represents $\Hom_{\Z}(L,\Z)$ via $\bilf$ and we write $\level(L)$ 
for the level of the lattice, being the smallest positive integer such $(L',\level(L) \cdot \qq)$ 
is even. Recall the inclusion $L \leq L'$ and note that $\Df \coloneq L'/L$ is a finite abelian group 
whose order is the absolute value of the determinant of a representing matrix of $\bilf$. 
Further, the quadratic form $\qq$ descends to a quadratic form $\qd : \Df \to \Q/\Z$ that is non-degenerate 
and such a pair $(\Df,\qd)$ is called a \emph{discriminant form}. 
In fact, any discriminant form arises from a non-degenerate even lattice in the above fashion. 
If we set $\sig(\Df) = \possig - \negsig \mod 8$ for some lattice $L$ inducing $\Df$, 
this quantity is well defined by Milgram's formula. 
For a natural number $n \in \N$ denote by $L(n)$ the pair $(L, n \cdot \qq)$ and set~$\Df(n) \coloneq L(n)'/L(n)$. 

Additionally, write $\Dftors{n}$ for the $n$-torsion in $\Df$ and $\Dfmult{n}$ for the $n$-multiples. We necessarily have the exact sequence 
\begin{equation}\label{eq:Df_torsion_mults}	
	\begin{tikzcd}
		0 \arrow[]{r}{} 		& \Dftors{n} \arrow[]{r}{\iota}		&	 \Df \arrow[]{r}{\cdot n}		& 	\Dfmult{n}  \arrow[]{r}{}		&	0 
	\end{tikzcd}
\end{equation}	
and find that $\Dfmult{n}$ is exactly the orthogonal complement of $\Dftors{n}$. 
We denote by $\C[\Df]$ the group algebra of $\Df$ with standard basis $\efr_{\lambda}$ for $\lambda \in \Df$.
There is a representation of $\SL_{2}(\Z)$ on the group algebra $\C[\Df]$ first described by Schoeneberg. 
In fact, to obtain a well-defined representation when $\sig(\Df)$ is odd, 
it must be considered on the metaplectic extension, rather than on $\SL_{2}$ directly. 
Let $\MGLtwop(\R)$ denote the metaplectic double cover of $\GL_{2}(\R)^{+}$. 
A model is given by pairs $(\gamma,\phi)$, where $\gamma \in \GL_{2}(\R)^{+}$ and $\phi : \Ha \to \C$ 
is a holomorphic square root of the factor of automorphy $j(\gamma,\tau)$, meaning $\phi^2(\tau) = j(\gamma,\tau)$. 
Multiplication is then declared by 
\[
	(\gamma, \phi(\tau)) (\gamma',\phi'(\tau)) = (\gamma \gamma', \phi(\gamma' \tau) \phi'(\tau)). 
\]
Projection to the first component \( \MGLtwop \ni (\gamma, \phi) \mapsto \gamma \in \GL_{2}^{+}(\R)\) defines a covering. 
The action of an element $\abcds = \gamma \in \GL_{2}(\R)$ on the upper half plane $\Ha$ is given by Möbius transform 
\(\tau \mapsto (a \tau + b)/(c \tau + d)\) with diagonal invariance group $\R^{\times} \into \GL_{2}^{+}(\R)$ and induces an action on $\MGLtwop(\R)$ via pullback. 
For the preimage of $\Gamma(1) \coloneq \SL_{2}(\Z)$ in $\MGLtwop(\R)$, we write $\Mp_{2}(\Z)$. 
This group is generated by the two elements $\Telmp = \left( \sm{1}{1}{0}{1}, 1\right)$ 
and $\Selmp = \left( \sm{0}{-1}{1}{0}, \sqrt{\tau} \right)$, where $\sqrt{\tau}$ denotes the standard holomorphic square root. 
These elements project to the standard generators of $\SL_{2}(\Z)$ and $\Selmp^2 = \Zelmp = \left(- \sm{1}{0}{0}{1} , i \right)$ 
generates the centre of $\Mp_{2}(\Z)$. 
\\
We now turn towards describing the Weil representation. It suffices to describe the action of the Weil representation $\rho_L$ associated to $L$ on the generators $\overline{T}$ and $\overline{S}$ of $\Mp_{2}(\Z)$. 
	We abbreviate $e(\argdot) \coloneq \exp(2 \pi i \argdot)$ and note that these operations are given by 
\begin{align}
	\dwrep_L(\Telmp) \mathfrak{e}_\lambda 	&=	e(\qd(\lambda)) \cdot \mathfrak{e}_\lambda,	\label{eq:op_T_via_rho}\\
	\dwrep_L(\Selmp) \mathfrak{e}_\lambda 	&=	\frac{e(-\sig(\Df)/8)}{\sqrt{|\Df|}} \cdot \sum_{\mu \in \Df} e(-\beta(\lambda,\mu)) \mathfrak{e}_\mu \label{eq:op_S_via_rho}.
\end{align}
It is apparent that the representation $\rho_{L}$ only depends on the discriminant form $\Df$.
\begin{de}
	Let $(\Df,\qd)$ be a discriminant form. 
	Then $\dwrep_{\Df} \coloneq \dwrep_L$ is a unitary representation of $\Mp_{2}(\Z)$ on $\C[\Df]$ that is fully determined by 
	\eqref{eq:op_T_via_rho} and \eqref{eq:op_S_via_rho}. 
\end{de}
Note that the generator of the centre $\overline{Z}$ operates for $\lambda \in \Df$ as 
		\begin{align}
			\dwrep_\Df(\overline{Z}) \mathfrak{e}_\lambda = e(-\sig(\Df)/4) \cdot \mathfrak{e}_{-\lambda}. \label{eq:op_Z_via_rho}
		\end{align}
		As a consequence, the Weil representation $\rho_\Df$ factors through $\Mp_{2}(\Z) / \langle\overline{Z}^2\rangle \simeq \SL_2(\Z)$, 
		if, and only if, $\sig(\Df) \equiv 0 \mod 2$.
		Further, writing $N \coloneq \level(\Df)$ we find that $\dwrep_{\Df}$ is trivial on the congruence subgroup $\Gamma(N)$ for even rank, 
	so that it factors through the finite group
	\[
		\SL_{2}(\Z /N\Z) \simeq \Gamma(1) / \Gamma(N).
	\]
	In case of odd rank, the oddity formula \cite[Chap.~15 p.~383 (30)]{ConwaySloane1998} implies $4 \mid N$, in particular, $\Df$ contains $2$-adic Jordan components. 
	In this case, there is a well known section 	\begin{align}\label{eq:section_Gamma4_metaplectic}
		s: \Gamma(4) \to \Mp_2(\Z), \qquad \gamma = \abcd \mapsto \left( \abcd , \epsilon_{d}^{-1} \left( \frac{c}{d} \right) \sqrt{j(\gamma, \tau)} \right)
	\end{align}
	with $\epsilon_{d} = 1$ or $i$, depending on whether $d \equiv 1$ or $3 \mod 4$. 
		The argument at the end of \cite[Thm.~5.4 p.~330]{Borcherds2000} yields that $\dwrep_{\Df}$ is trivial on $s(\Gamma(N))$ and factors through the central extension of $\SL_{2}(\Z/N\Z)$ by $\{\pm 1\}$ given by 
	\[
		\Mp_{2}(\Z) / s(\Gamma(N)). 
	\]
In fact, the section in \eqref{eq:section_Gamma4_metaplectic} may be considered on $\Gamma_0(4)$, 
provided a larger central extension $\mathcal{G}$ of $\SL_{2}(\R)$ is considered which is given in \cite[p.~444]{ShimuraMF_HalfIntegralWeight}.

The natural projection $\Mp_{2}(\Z) \to \SL_{2}(\Z)$ has the section $\gamma \mapsto \tilde{\gamma} \coloneq (\gamma, \sqrt{j(\gamma,\tau)})$, 
where we selected the standard branch of the holomorphic square root. Do note that this mapping is not a group homomorphism. 
Further, we require the following piece of notation for the computation of Fourier expansions of non-holomorphic Eisenstein series. 

\begin{de}\label{de:WeilRepCoeffs}
	For $\lambda,\mu \in \Df$ and $(\gamma,\phi) \in \Mp_{2}(\Z)$, we introduce the following notation for the $\lambda,\mu$-\emph{coefficient}\index{Weil!representation!coefficient} of the Weil representation $\dwrep$:
	\begin{equation}
		\dwrep_{\lambda,\mu}(\gamma,\phi) \coloneq \langle \rho_{L}(\gamma,\phi) \efr_{\lambda} , \efr_{\mu} \rangle.
	\end{equation}
\end{de}

Before continuing we briefly recall the notion of a classical modular form. 
\begin{de}\label{de:Peterssonslash}
	Let $\gamma \in \GL_{2}^{+}(\R)$ and $f : \Ha \to \C$ be a function. Define 
	\[
		f\vert_{k} \gamma (\tau) \coloneq \det(\gamma)^{k/2} j(\gamma, \tau)^{-k}(\tau) f(\gamma \tau).
	\]
\end{de}

Finally, a holomorphic modular form $f : \Ha \to \C$ of weight $k$ for some congruence subgroup $\Gamma \leq \Gamma(1)$ with character $\chi : \Gamma \to \mathbb{T}$ 
is a holomorphic function such that for all $\gamma \in \Gamma$ we have $f \vert_{k} \gamma = \chi(\gamma) f$ and $f$ is holomorphic at the cusps of $\Gamma$. 
The space of such functions is denoted by $\MF_{k}(\Gamma,\chi)$ and the subspace of elements vanishing at the cusps is denoted by $\CF_{k}(\Gamma,\chi)$ 
and called the space of cusp forms. 
For the technical modifications required for forms of half-integral weight, we refer to \cite[p. 444]{ShimuraMF_HalfIntegralWeight}. 
Note that Definition~\ref{de:Peterssonslash} may also be generalised to the extension $\mathcal{G}$ of $\GL_{2}^{+}(\R)$ that Shimura uses.

\subsection{Gauss sums}

This is a complementary section containing different Gauss sums appearing in
section~\ref{sec:Hecketheory}. The presented sums and some relations are already found in~\cite[Sec. 4]{SteinHecke}
including further computations which we will not repeat.

\begin{de}\label{de:Gauss_sums}
  Let $(L,\qq)$ be a quadratic $\Z$-lattice of rank $\dimV \in \N$. Let $n \in \N$,
  $p$ be a prime, and consider $h \in \Z$ plus a Dirichlet Character $\chi$ modulo
  $n$. Define
  \begin{align}
    G_{L,p}(n,h) 							&:= \sum_{v \in L/p^nL} e\left( \frac{h}{p^n} \qq(v) \right),  \\
    g_p\left[ n , \chi , h\right] 	&:= \sum_{k \in \Z/p^n \Z} \chi(k) e \left( \frac{hk}{p^n} \right). 
  \end{align}
\end{de}

For odd $d \in N$ we write
\begin{align}\label{eq:de_epsilond}
  \epsilon_{d} = 
  \begin{cases}
    1, & \textnormal{if } d \equiv 1 \mod 4 , \\
    i, &	\textnormal{if } d \equiv 3 \mod 4 . \\
  \end{cases}
\end{align}

\begin{rem}\label{rem:gpnchih_computed}
  For $\chi = \chi_1$ the trivial character on $(\Z/p^n\Z)^\times$ or
  $\chi = \chi_p = \left( \frac{\cdot}{p} \right)$ the Legendre symbol and $p \neq 2$, we
  find
  \begin{align*}
    g_p\left[ n , \chi_1 , h\right] 
    &= 
      \begin{cases}
        p^{n-1} (p-1), &\textnormal{ if } \nu_p(h) \geq n, \\
        -p^{n-1}, &\textnormal{ if } \nu_p(h) = n-1, \\
        0, &\textnormal{ if } \nu_p(h) < n-1.
      \end{cases}
    \\
    g_p\left[ n , \chi_p , h\right] 
    &= 
      \begin{cases}
        p^{n-1/2} \left( \frac{h/p^{n-1}}{p} \right) \epsilon_p, &\textnormal{ if } \nu_p(h) = n-1, \\
        0, &\textnormal{ if } \nu_p(h) \neq n-1.
      \end{cases}
  \end{align*}
\end{rem}

In order to express the next Gauss sum, we require the following definition.

\begin{de}\label{de:nk_and_R}
  Let $(L,\qq)$ be an integral $\Z$-lattice. By \cite[Lemma 4.3]{SteinHecke} for any $n \in \N$
  and odd prime $p$, there is an orthogonal decomposition of $L/p^nL$ into
  $\Z/p^n\Z$ submodules
  \[
    \left( \bigoplus_i L_i \right) \oplus \left( \bigoplus_j M_j \right) \oplus N,
  \]
  where $L_i = (\Z/p^n\Z) v_i$ is one dimensional with
  $\bilf(v_i,v_i) \in (\Z/p^n\Z)^\times$, $M_j = (\Z/p^n\Z) v_j$ with
  $\bilf(v_j,v_j) \in p^k (\Z/p^n\Z)^\times$ where $1 \leq k \leq n -1$, and
  $\bilf(N,N) \subseteq p^n \Z$. We may assume the $M_j$ being sorted with respect to
  increasing valuation $k$ of $\qq(v_j)$. Define
  \begin{equation}
    n_k = \# \{ 1 \leq i \leq m : \nu_p(\bilf(v_i,v_i)) = k \} \textnormal{ and } R_{p}^{n} \coloneq \sum_{k=0}^{n-1} (n-k) n_k.
  \end{equation}
\end{de}

\begin{rem}\label{rem:GLpnh_as_GLpn1}
  For $p>2$ we find that the following reduction is true (cf. Definition~\ref{de:nk_and_R}):
  \[
    G_{L,p}(n,h) = \left( \frac{h}{p} \right)^{R_{p}^{n}} \underbrace{p^{n R_{p}^{1}} \cdot \prod_{ k=0 }^{n-1} \left( \epsilon_{p^{n - k}} \sqrt{p^{n + k}} \right)^{n_k} \cdot \prod_{i=1}^{n_k} \left( \frac{\qq(v_i)\|\qq(v_i)\|_p}{p^{n} \|\qq(v_i)\|_p} \right)}_{=G_{L,p}(n,1)}.
  \]
\end{rem}

The following Gauss sum is required in Proposition~\ref{prop:VVMF_Heckeops_Fouroercoefficients_even} and in the subsequent
results building upon it.

\begin{de}\label{de:GausssumDfd}
  Let $\Df$ be a discriminant form. For $d \in \N$ define
  \begin{align}
    G_{\Df}(d) = \sum_{\lambda \in \Df} e \left( d \qd(\lambda)\right).
  \end{align}
\end{de}

In \cite{BruinierStein}, the authors compare \cite[Lem.~4.6 p.~115]{McGraw2003} to \cite[Thm.~5.4 p.~329]{Borcherds2000} in
order to conclude that for $\gcd(d,\level(\Df)) = 1$ the following association
defines a character:
\begin{equation}\label{eq:QuotofGauss_char}
  (\Z/\level(\Df)\Z)^\times \ni d \mapsto \frac{G_{\Df}(1)}{G_{\Df}(d)}. 
\end{equation}

The following remark represents groundwork for the proof of Corollary~\ref{cor:Recover_Bruinier_Stein_even}.

\begin{rem}\label{rem:GDdGLp}
  Let $\Df$ be the discriminant form of a lattice $L$ of rank $\dimV$ and
  $d \in \N$. Then, by \cite[Thm.~5.2.2]{Barnard2003} we find
  \[
    G_{\Df}(d) = \sqrt{|\Df|} \sqrt{i}^{\sgn(\Df)} \frac{1}{d^{m/2}} \sum_{\nu \in L/dL} e \left( -\frac{1}{d} \qq(\nu) \right).
  \]
  In particular, the case $d=p^n$ yields
  \[
    G_{\Df}(p^n) = \sqrt{|\Df|} \sqrt{i}^{\sgn(\Df)} \frac{1}{p^{nm/2}} \cdot G_{L,p}(n,1).
  \]
  As a consequence, with $\Dftors{p}$ denoting the $p$-torsion we find for
  $p \nmid \level(L)$ that
  \begin{align}\label{eq:GDdGLp}
    \frac{G_{\Df}(1)}{G_{\Df}(p)} = p^{m/2} \frac{1}{\overline{G_{L,p}(1,1)}} = \frac{p^{-m/2}}{|\Dftors{p}|} G_{L,p}(1,1).
  \end{align}
\end{rem}

\begin{proof}
  Only the last equation has to be proven. By~\cite[Prop.~3.8]{ScheithauerWeilRep} we find the
  identity $|G_{\Df}(d)| = \sqrt{|\Dftors{d}||\Df|}$. As a consequence,
  \[
    |G_{L,p}(n,1)| = \sqrt{|p^{mn}| |\Dftors{p}|}.
  \]
  This immediately implies the result. In case of $p \mid \level(L)$, the
  sum~$G_{\Df}(p^n)$ might vanish.
\end{proof}

Do note that explicit formulae for a more general version of $G_{\Df}$ are also
contained in \cite[Thm.\ 3.9 p.\ 11]{ScheithauerWeilRep}.

	\section{Modular forms}

\begin{de}
	A function $f : \Ha \to \C[L'/L]$ is called \emph{(vector-valued) modular function} of weight $k \in \frac{1}{2} \Z$ if for all $\gamma = (M, \phi) \in \Mp_{2}(\Z)$ the following transformation law holds 
	\[
	f(\gamma \tau) = \phi(\tau)^{2k} \cdot \left[\rho_L(\gamma) f\right](\tau).
	\]
					\end{de}

The operation of $\Telmp$ implies that such a function $f$ possesses a Fourier expansion if it is holomorphic, meaning there are coefficients $a(\lambda,n) \in \C$ such that 
\begin{equation}\label{eq:VVMF_FourierExp}
	f( \tau ) = \sum_{\lambda \in L'/L} \sum_{n \in \qd(\lambda) + \Z} a(\lambda,n, \Im(\tau)) e(n\tau) \efr_{\lambda}.
\end{equation}
In case the discriminant group $L'/L$ is trivial, the index $\lambda \in L'/L$ is eliminated from the notation. 
We say that a holomorphic modular function $f$ is a \emph{modular form}, 
if all coefficients $a(\lambda,n)$ in \eqref{eq:VVMF_FourierExp} vanish for negative $n \in \Q$. 
Further, it is called a \emph{cusp form}, if all coefficients $a(\lambda,n)$ vanish for non-positive $n \in \Q$. 
The spaces of such modular functions are denoted by $\MF_{L,k}$ and $\CF_{L,k}$. 
Given a modular form $f \in \MF_{L,k}$, there is a general procedure to induce a modular form to a sublattice $M \leq L$. 
In that case, $M \leq L \leq L' \leq M'$ implying the inclusion $L'/M \leq M'/M$ and 
there is a lift $\uplift{L}{M} : \MF_{L,k} \to \MF_{M,k}$ declared by 
\begin{align}\label{eq:upop_concrete}
    \left(\uplift{L}{M}f\right)_\mu = 
    \begin{cases}
        f_{\overline{\mu}}, 	& \textnormal{if } \mu \in L'/M, \\ 
        0,						&	\textnormal{otherwise}, 
    \end{cases}
\end{align} 
where $\overline{\mu}$ refers to the image of $\mu \in L'/M$ under the projection to $L'/L$ (cf. \cite[Sec.~4]{Scheithauer2015}).
This construction plays a role in Subsection~\ref{ssec:Lseries:DefandConv} and there are further means to construct induced vector-valued modular forms from 
existing ones described in \cite[Sec.~4]{Scheithauer2015}. 
However, a concrete class which is vital for our purposes is the following.

\begin{de}\label{de:Theta}
	Let $L$ be even positive definite, $\lambda \in L'/L$, and $\tau = u + iv \in \Ha$. 
	Define 
		\begin{align}
			\theta_{L,\lambda}(\tau)
			\coloneq\;
			&	v^{\tfrac{\negsig}{2}} \cdot \sum_{l \in L + \lambda} e\left(\tau \qq(l) \right),  
			\qquad 
			\Theta_{L} \coloneq \sum_{\lambda \in L'/L} \theta_{L,\lambda}(\tau) \efr_{\lambda}.
	\end{align}
\end{de}

These theta series $\Theta_{L}$ are modular forms of weight $\possig/2$ (cf.\ \cite[Thm.~4.1 p.~505]{Borcherds1998}) and their Fourier coefficients equal representation numbers of the lattice $L$. 
Comparing these theta forms to Eisenstein series will yield formulae for the representation numbers in terms of divisor functions up to the coefficients of a cusp form. 
Thus, deriving asymptotic bounds on the Fourier coefficients of cusp forms is vital for quantifying these formulae. 
Additionally, these bounds serve, in our context, as a basis for proving convergence results for $L$-series associated to cusp forms. 
We will reduce this question to the scalar-valued case and begin with stating the following observation.

\begin{prop}[{\cite[Thm.~5.4]{Borcherds2000}}]
	Suppose that $\Df$ is a discriminant form of level dividing $N$. 
	If $b$ and $c$ are divisible by $N$, then $\gamma = \left( \sm{a}{b}{c}{d} , \sqrt{c \tau + d} \right) \in \Mp_{2}(\Z)$ acts on the Weil representation by 
	\[
	\dwrep_{L}(g) \efr_{\lambda} = \chi_{\Df}(g) \efr_{d \lambda}
	\]
	where $\chi_{\Df}$ denotes the character of $\overline{\Gamma}_0(N)$ defined in \cite[Thm.~5.4]{Borcherds2000}.
\end{prop}

In fact, in case of $4 \nmid N$, the character $\chi_{\Df}$ is trivial on the preimage of $\Gamma(N)$ 
in $\Mp_{2}(\Z)$. In the other case, meaning $4 \mid N$, the section $s(\Gamma(N))$ in $\Mp_{2}(\Z)$ is contained in the kernel of the character (cf.\ \cite[Lem.~5.3 p.~329]{Borcherds2000}). 
Hence, the component functions $f_{\lambda}$ of a vector-valued modular form $f \in \MF_{L,k}$ are actually contained in 
$\MF_{k}(\Gamma(\level(L)))$.

As a consequence, for the purpose of asymptotically bounding the coefficients of the vector-valued modular form $f$, 
it suffices to bound 
the asymptotic behaviour of coefficients of scalar-valued modular forms for $\Gamma(N)$. 
There is the following standard bound available in the literature (cf.\ \cite[Lem.~3.62 p.~90]{Shimura1971}, \cite[p.~447]{ShimuraMF_HalfIntegralWeight}).

\begin{lemma}\label{lem:CF_FC_trivialbound}
	Let $k \in \tfrac{1}{2}\Z$ and $N \in \N$ with $4 \mid N$ if $k$ is not integral. 
	Suppose that $f \in \CF_{k}(\Gamma(N))$ has Fourier expansion 
	\[
	f(\tau) = \sum_{n \in \Z} a(n) e(n/N \tau).
	\]
	Then 
	\[
	a(n) \in \LandO(n^{k/2}) \textnormal{ for } n \to \infty.
	\]
\end{lemma}

Also compare \cite{Jin2018} deriving that if the component functions of a vector-valued modular form satisfy 
this bound, then it is a cusp form. 
Combining this bound with the bounds on coefficients of Eisenstein series yields the following result. 

\begin{cor}\label{cor:Hecke_bound}
	Let $f \in \MF_{k}(\Gamma(N))$ with Fourier expansion as above. 
	Then 
	\[
	a(n) \in \LandO(n^{k-1}).
	\]
\end{cor}

In many cases there are sharper bounds on the asymptotic behaviour known, 
for instance, the bound $\LandO(n^{k/2-1/5})$ by Rankin in the even case (cf.~\cite[Thm.~2, p.~358]{Rankin1939II}). 
However, even sharper bounds are to be found in the literature for forms to larger congruence subgroups. 
Consequently, a reduction to the setting of these bigger congruence subgroups is desirable. 
Recall that $\Gamma(\level(L)) \trianglelefteq \Gamma_{1}(\level(L))$ with abelian quotient, 
which remains true under the section $s$ from~\eqref{eq:section_Gamma4_metaplectic} so that we may, by a standard representation theoretic argument, reduce to the case of a modular form for $\Gamma_1(\level(L))$ with finite character.

In a subsequent step, 
this information may be utilised to reduce to the even simpler case of~$\Gamma_{0}(N)$ modular forms. 
However, first note that a scalar-valued modular form~$f$ that has been transferred by the Petersson slash operator 
still possesses a Fourier expansion. 
Hence, it is meaningful to speak of a Fourier expansion of forms that 
have been altered by such a procedure. 
The proof idea of the following lemma is found in \cite[1.2.11 p.~24]{Diamond2005}.

\begin{lemma}\label{lem:fgamma_has_FC}
	Let $\Gamma$ be a congruence subgroup, $k \in \tfrac{1}{2}\Z$, $f \in \MF_{k}(\Gamma)$, and 
	select $\gamma \in \MGLtwop(\Q)$. 
	Then $f \vert_{k} \gamma$ has a Fourier expansion 
	and its constant term vanishes if $f$ is a cusp form. 
\end{lemma}

With this tool at hand, we may reduce to the case of a modular form for $\Gamma_1(N)$ without character 
which, in turn, reduce to modular forms for $\Gamma_{0}(N)$ with character factoring through $\Gamma_{1}(N)$. 
We denote by $\nu_{p}(N)$ for a prime $p$ the $p$-valuation of $N$.

\begin{lemma}
	Let $k \in \tfrac{1}{2}\Z$, $N \in \N$ which is assumed to be a square with $\nu_{2}(N) \in 3 \N$ if $k$ is not integral. 
	Suppose $\psi : \Gamma_{1}(N) \to \Tc$ is a character that is trivial on $\Gamma(N)$, 
	and 
	\[
	f = \sum_{n \in \N/N} a(n) e(n \tau) \in \CF_{k}(\Gamma_{1}(N),\psi). 
	\] 
		Assume there is some $l \in \N$ as well as some $\alpha \in \R$ such that for all characters 
	$\chi : \Gamma_0(N^2) \to \Tc$ that are trivial on $\Gamma_{1}(N^2)$
	and any choice $g = \sum_{n \in \N} b(n) q^n \in \CF_{k}(\Gamma_0(N^2),\chi)$ we have $b(n^l) \in \LandO(n^{\alpha})$.
	Then also 
	\[
		a(n^l) \in \LandO(n^{\alpha}).
	\]
\end{lemma}

\begin{proof}
	Assume $f$ to be as above and $k \notin \Z$ implying $4 \mid N$. 
	The case of $k \in \Z$ is similar but easier to prove.  
	We demonstrate that $f(N \argdot)$ is a modular form with trivial character for $\Gamma_{1}(N^2)$.
	Recall that there is a section $s: \Gamma_{1}(N) \to \mathcal{G}$ given by \eqref{eq:section_Gamma4_metaplectic}. 			Let $\gamma = \left(\abcds, \phi(\tau) \right) \in s\left(\Gamma_{1}(N^2)\right)$, define $M_N \coloneq \left( \sm{N}{0}{0}{1} , 1 \right) \in \MGLtwop(\Q)$, and compute 
	\begin{align*}
		M_{N} \cdot \gamma \cdot M_{N}^{-1} = \left( \m{a}{Nb}{c/N}{d} , \phi(\tau/N) \right) \in s(\Gamma(N)). 	\end{align*}
	The inclusion follows from the condition on $N$ in conjunction with \eqref{eq:section_Gamma4_metaplectic}. 
												Hence, for any choice of $\gamma \in s(\Gamma_{1}(N^2))$ we infer
	\begin{align*}
		f(M_N \gamma \tau) 	
		=	f(M_N \gamma M_N^{-1} M_N \tau) 
		=	\psi(M_N\gamma M_N^{-1}) f(M_N \tau) \phi(\tau)^{2k}. 
	\end{align*}
	Recall that by assumption $\psi$ was trivial on $s(\Gamma(N))$. 
	As a result, we have that 
	\[
		f(N \tau) = f(M_{N} \tau) = \sum_{n \in \N/N} a(n) e(Nn \tau) \in \CF_{k}(\Gamma_{1}(N^2)). 
	\]
	Finally, the space $\CF_{k}(\Gamma_{1}(N^2))$ may be represented as a direct sum of spaces 
	of modular forms for $\Gamma_0(N^2)$ with character 
		so that the coefficients of $f$ must fulfil $a(n^{l}) \in \LandO(n^{\alpha})$ by assumption. 
\end{proof}

As a consequence, it suffices to cite asymptotic bounds of Fourier coefficients for modular forms 
for $\Gamma_{0}(N)$ with nebentypus. 
There are some sharper bounds available in this setting. 
For modular forms of integral weight, there is the Deligne bound, 
providing the best possible bound.

\begin{thm}[Ramanujan-Petersson-Deligne]\label{thm:Deligne_bound}
	For $f \in \CF_k(\Gamma_0(N),\chi)$ with Fourier expansion $\sum_{0 < n \in \Z/N} a(n) e(n\tau)$, 
	we have for $\gcd(n,N) = 1$ and any $\varepsilon > 0$ that 
	\[
	a(n) = \mathcal{O}(n^{k/2-1/2+\varepsilon}), \qquad n \to \infty.
	\] 
\end{thm}

Due to the structure of the Hecke algebra it suffices to find a similar bound for coefficients at primes that divide the level. 
In fact, by Atkin--Lehner theory, it suffices to do so for newforms.
The following is due to Ogg and Li and part of \cite[Thm.~3 p.~295]{Li1974}. 

\begin{prop}
	Let $k \in \Z_{> 0}$ and $f \in \CF_{k}^{\textnormal{new}}(\Gamma_0(N),\chi)$ with Fourier expansion 
	\[
	f(\tau) = \sum_{n = 1}^{\infty} a(n) e(n\tau)
	\]
	be a normalised form that is an eigenform for the Hecke algebra $\Heckealg_{N}$. 
	Then for $p \mid N$ we find the following
	\begin{enumerate}[label=\roman*)]
		\item 
		If $\chi$ is not induced by a character modulo $N/p$, then $\lvert a(p) \rvert = p^{k/2 - 1/2}$. 
		
		\item 
		If $\chi$ is a character modulo $N/p$ and $p^2 \nmid N$, then $a(p)^2 = \chi(p) p^{k-1}$. 
		
		\item 
		If $\chi$ is a character modulo $N/p$ and $p^2 \mid N$, then $a(p) = 0$.  
	\end{enumerate}
\end{prop} 

As a Corollary we obtain the following bound on Fourier coefficients. 

\begin{thm}\label{thm:Deligne_bound_general_Gamma0Nchi}
	Let $k \in \Z_{> 0}$ and $f \in \CF_{k}(\Gamma_0(N),\chi)$ with Fourier expansion
	\[
		f(\tau) = \sum_{n = 1}^{\infty} a(n) e(n\tau).
	\]
	Then 
	\[
		a(n) \in \LandO_{\varepsilon}(n^{k/2-1/2 + \varepsilon}). 
	\]
\end{thm}

However, for forms of half-integral weight, the situation is more delicate. 
Nevertheless, there are the following two bounds available in the literature. 
The first has been proven by Blomer and Harcos and is derived from \cite[Cor.~2 p.~55]{Blomer2008}.

\begin{thm}\label{thm:BlomerBound}
	Let $k \in (\Z/2) \setminus \Z$ with $k \geq 5/2$, $N \in \N$, and $\chi$ be a Dirichlet character modulo $4N$. 
	Let
	\[
	f(\tau) = \sum_{n = 1}^{\infty} a(n) e(n\tau)
	\] 
	be in $\CF_{k}(\Gamma_{0}(4N),\chi)$. Then for $t \in \N$ and $\gcd(n,2N)=1$ we find
	\[
	a(tn) \in \LandO_{\varepsilon}(n^{k/2 - 5/16 + \varepsilon}).
	\]
\end{thm}

Note that the above theorem is also true in case of $k = 3/2$, 
if the choice of cusp form is reduced to the orthogonal complement of theta functions. 
\\
The second bound goes back to Iwaniec \cite{Iwaniew1987} and Duke \cite{Duke1988} in a special case 
and has been generalised 
by Waibel \cite[Thm.~1 p.~186]{Waibel2018}.

\begin{thm}\label{thm:WaibelBound}
	Let $k \in \tfrac{1}{2}\Z \setminus \Z$ with $k \geq 5/2$, $4 \mid N \in \N$, and $\chi$ be a character modulo $4N$. 
	Let
	\[
	f(\tau) = \sum_{n = 1}^{\infty} a(n) e(n\tau)
	\] 
	be in $\CF_{k}(\Gamma_{0}(4N),\chi)$. 
	Then it holds for indices $n = t v^2 w^2$ with $t$ squarefree, $v \mid N^{\infty}$, and $\gcd(w,N) = 1$  
	that	
	\[
	a(n) \in \LandO_{\varepsilon}(n^{k/2 - 1/2 + \varepsilon} v^{1/2}).
	\]
\end{thm}

Note that the above implies $a(n) \in \LandO_{\varepsilon}(n^{k/2-1/4 + \varepsilon})$, 
which is the Weil bound.

In conclusion we have the following bounds on Fourier coefficients of vector-valued modular forms.

\begin{cor}\label{cor:VVMF_bound_FC_for_Lseries}
	Let $2 \leq k \in \tfrac{1}{2}\Z$ and $f \in \CF_{L,k}$ with Fourier expansion
	\[
	f(\tau) = \sum_{\lambda \in \Df} \sum_{n \in \qd(\lambda) + \Z} a(\lambda,n) \cdot e(n \tau) \efr_{\lambda}.
	\]
	Select $t \in \Q$, then the following bound is true for all $\lambda \in \Df$
	\[
	a(\lambda,tn^2) \in 
	\begin{cases}
		\LandO_{\varepsilon} (n^{k - 1 + \varepsilon}), 	& 2 \mid \rk(L), \\
		\LandO_{\varepsilon} (n^{k - 1/2 + \varepsilon}), 	& 2 \nmid \rk(L).
	\end{cases}
	\]
	If in addition $n \in \Q$ is assumed to be coprime to $\level(L)$ we obtain 
	\[
	a(\lambda,tn) \in 
	\begin{cases}
		\LandO_{\varepsilon} (n^{k - 1/2 + \varepsilon}), & 2 \mid \rk(L), \\
		\LandO_{\varepsilon} (n^{k - 5/16 + \varepsilon}), & 2 \nmid \rk(L).
	\end{cases}
	\]
\end{cor}

\subsection{Eisenstein series}

Beyond the theta series presented above, Eisenstein series represent another
important class of modular forms. Our analysis requires the use of
non-holomorphic Eisenstein series, which we introduce briefly based upon \cite[2.5
pp.~30-36]{Kiefer2021}. The statements are essential for our investigations, more
specifically for deriving meromorphic continuation of the symmetric square type
$L$-functions arising from the Rankin--Selberg type integrals in Section~\ref{sec:Lseries}. In
the following, let $(L,\qq)$ be a non-degenerate quadratic lattice and
$k \in \tfrac{1}{2}\Z$ be a half integer and $\overline{\Gamma_{\infty}} \leq \Mp_{2}(\Z)$ be
the group generated by $\Telmp$.

\begin{de}\label{de:VVEisensteinseriesnonholomorphic}
  Let $\lambda \in \Df$ be isotropic and $k \in \tfrac{1}{2}\Z$. Define the \emph{Eisenstein
    series}\index{Eisenstein series!non-holomorphic}
  \begin{equation}
    E_{L,\lambda,k}(\tau,s) \coloneq \frac{1}{2} \sum_{\gamma \in \overline{\Gamma_{\infty}} \setminus \Mp_{2}(\Z)} \left( \Im(\tau)^s \efr_\lambda \right) \vert_{L,k} \gamma .
  \end{equation}
\end{de}

By construction this function transforms as a modular form of weight $k$.

\begin{lemma}\label{lem:ELk_convergence_hypLaplacian_Eigenfunction}
  The Eisenstein series $E_{L,\lambda,k}$ converges normally on $\Ha$ for
  $\Re(s) > 1 - \frac{k}{2}$, is real analytic in $\tau$ and an eigenfunction of the
  hyperbolic Laplace operator of weight $k$ with Eigenvalue $s(s + k -1)$.
\end{lemma}

Recall that the Weil representation $\dwrep_{L}$ acts trivially on
$\Gamma(\level(L))$, so that the components of $E_{L,\lambda,k}(\tau,s)$ are scalar-valued
Eisenstein series for $\Gamma(\level(L))$. These, however, have meromorphic
continuation in $s$. The Fourier expansion of these Eisenstein series is
computed by Bruinier and Kühn but with respect to the dual Weil representation,
resulting in the following slight reformulation.

For $v \in \R^{\times}$ and $k \in \tfrac{1}{2}\Z$ representing the weight in the
respective section, set
\[
  \WhittakerW_s(v) \coloneq \lvert v \rvert^{-k/2} \WhittakerW_{\sgn(v)k/2 ,(1-k)/2 - s} (\lvert v \rvert),
\]
where $\WhittakerW_{\kappa,\mu}(z)$ is the usual Whittaker function as in \cite[(13.14.3)
p.~334]{OlverClark2010}.

\begin{prop}[{\cite[Prop.~3.1 p.~1695]{BruinierKuehn2003}}]\label{prop:VVEisensteinseriesnonholomorphicFourierExpansion}
  For $\sig(\Df) \equiv 2k \mod 2$, the Eisenstein series $E_{L,\lambda,k}$ has the Fourier
  expansion
  \begin{equation}\label{eq:prop:VVEisensteinseriesnonholomorphicFourierExpansion}
    E_{L,\lambda,k}(\tau,s) = \sum_{\mu \in L'/L} \sum_{n \in \Z + \qd(\mu)} c_{\lambda}(\mu,n,s,v) \e(n u) \efr_{\gamma}
  \end{equation}
  where the coefficients $c_{\lambda}(\mu,n,s,v)$ are given by
  \begin{align*}
    \begin{cases}
      (\delta_{\lambda,\mu} + i^{\sig(\Df) + 2k} \delta_{-\lambda,\mu}) v^s + 2 \pi v^{1-k-s} \frac{\Gamma(k+2s-1)}{\Gamma(k+s)\Gamma(s)} \cdot \sum_{x \in \Z \setminus \{0\}} \lvert 2c \rvert^{1-k-2s} H_c(\beta,0,\mu,0), & \textnormal{ if } n = 0, \\
      \frac{2^k \pi^{s+k-1}}{\Gamma(s+k)} \WhittakerW_s(4 \pi n v) \cdot \sum_{c \in \Z \setminus \{0\}} \lvert c \rvert^{1-k-2s} H_{c}(\lambda,0,\mu,n), & \textnormal{ if } n > 0, \\
      \frac{2^k \pi^{s+k-1}}{\Gamma(s)} \WhittakerW_s(4 \pi n v) \cdot \sum_{c \in \Z \setminus \{0\}} \lvert c \rvert^{1-k-2s} H_c(\lambda,0,\mu,n), & \textnormal{ if } n < 0. \\
    \end{cases}
  \end{align*}
  Here, $H_{c}$ denotes the generalised Kloostermann sum\index{Kloostermann!sum}
  \begin{equation}\label{eq:prop:VVEisensteinseriesnonholomorphicFourierExpansion:Kloostermannsum}
    H_{c}(\lambda,m,\mu,n) = \frac{e^{- \pi i \sgn(c)k /2}}{\lvert c \rvert} \sum_{\substack{0 \not\equiv d \mod c \\ \abcds \in \Gamma(1)_{\infty} \backslash \Gamma(1) / \Gamma(1)_\infty}} \overline{\rho}_{\mu,\lambda}\widetilde{\abcd} \e\left( \frac{am+nd}{c} \right),
  \end{equation}
  the matrix coefficients $\rho_{\mu,\lambda}$ are as in Definition~\ref{de:WeilRepCoeffs}.\end{prop}

According to \cite[p. 372]{Hejhal1983} for an isotropic element $\lambda \in \Df$
\begin{align}
  E_{L,\lambda,k}(\tau,s) = \frac{1}{2} \sum_{\mu \in \Iso(L'/L)} c_{\lambda}(\mu,0,s) E_{L,\mu,k}(\tau,1-k-s).
\end{align}

From the Fourier expansion of $E_{L,\lambda,k}$ and the properties of the Whittaker
function $\WhittakerW_s$ (cf.\ \cite[p.~335 (13.14.21)]{OlverClark2010}) follows an asymptotic
bound for the Eisenstein series which is relevant for the application of the
Rankin-Selberg method.

\begin{rem}\label{rem:Eis_locunifbound}
  Let $s \in \C$ such that $E_{L,\lambda,k}(\tau,s)$ is holomorphic. We find for
  $\tau = u + iv \in \Ha$ and $\sigma = \max\{\Re(s), \Re(1-k-s)\}$ that
  \begin{equation}
    E_{L,\lambda,k}(\tau,s) = \LandO(v^{\sigma}), \qquad v \to \infty.
  \end{equation}
  In fact, the constant required to bound may be chosen locally uniformly in
  $s$.
\end{rem}

We also introduce the following notation in order to state a functional
equation:
\begin{equation}\label{eq:de:clambdamu0s}
  c_{\lambda}(\mu,0,s,v) = (\delta_{\lambda,\mu} + i^{\sig(\Df)+2k} \delta_{- \lambda,\mu}) v^{s} + c_{\lambda}(\mu,0,s) v^{1-k-s}. 
\end{equation}

\nocite{Hejhal1976} According to \cite[p. 372]{Hejhal1983} we have the following
transformation property.

\begin{prop}\label{prop:VVEis_funequ}
  Let $\Iso(L'/L)$ denote the isotropic elements of $L'/L$. For $\lambda \in \Iso(L'/L)$
  we find the following functional equation
  \begin{align}
    E_{L,\lambda,k}(\tau,s) = \frac{1}{2} \sum_{\mu \in \Iso(L'/L)} c_{\lambda}(\mu,0,s) E_{L,\mu,k}(\tau,1-k-s).
  \end{align}
\end{prop}

	\section{Hecke theory}\label{sec:Hecketheory}

We write $\Gabar = \Mp_{2}(\Z)$ and recall that for $\alpha \in \MGLtwop(\Q)$, the associated Hecke element is 
\[
	T_{\alpha} = \Gabar \, \alpha \, \Gabar.
\]
Further, the collection of these elements carries a natural structure as a $\Q$-algebra, the so called \emph{Hecke algebra}. 
We are interested in double cosets associated to particular elements. 

For $m \in \N$ define   
\begin{align}\label{eq:def_gm2_metaplecticcase}
	g(m) \coloneq \left( \m{m}{0}{0}{1} , 1 \right) \in \MGLtwop(\Q), \qquad \Hoph(m) \coloneq T_{g(m)}.
\end{align}

In the classical theory, considerations of general Hecke operators are reduced to $\Hoph(p)$ for prime numbers $p$. 
In this spirit, we state the following assertion which is proven by direct computation.

\begin{lemma}\label{lem:Hophrelation_pnmidN}
	We find for $r \in \N$ and a prime $p$ not dividing $\level(L)$ that 
	\[
	\Hoph(p^{r+1}) = \Hoph(p)\Hoph(p^{r}) - p \cdot T_{p \mathcal{I}} \cdot \Hoph(p^{r-1}) - \delta_{r=1} \cdot T_{p \mathcal{I}}. 
	\]
\end{lemma}

Note that this result is consistent with \cite[Lem.~4.5.7 (1) p.~140]{Miyake2006}, where $T(p,p) = T_{p \mathcal{I}}$ and $T(1,p^{e}) = \Hoph(p^{e})$ in our notation. 
Note also that the number $N$ in Miyake may be assumed to equal~$1$ in the current setting. 
By utilising the statement above, we may derive a relation for square indices which is the relevant case in the vector-valued setting.

\begin{lemma}\label{lem:Tpr2_recursion_relation}
	For $2 \leq r \in \N$ and a prime $p$, the following relation is true: 
	\[
	\Hoph(p^{r+2}) 	=  \left[\Hoph(p^{2}) + (1-p) \cdot T_{p \mathcal{I}} \right] \Hoph(p^r) - (1 + \delta_{r=2} p^{-1}) \left( p \cdot T_{p \mathcal{I}} \right)^2 \Hoph(p^{r-2}).
	\]
\end{lemma}

\begin{proof}
	We apply Lemma~\ref{lem:Hophrelation_pnmidN} repeatedly to obtain the following:
	\begin{align*}
		\Hoph(p^{r+2}) 	=\;&	{\Hoph(p)} \Hoph(p^{r+1}) - p \cdot T_{p \mathcal{I}}  \Hoph(p^r)  \\
		=\;&	{\Hoph(p)} \left[ \Hoph(p)\Hoph(p^{r}) - p \cdot T_{p \mathcal{I}}  \Hoph(p^{r-1}) \right] - p \cdot T_{p \mathcal{I}}  \Hoph(p^r)  \\
		=\;&	\left[\Hoph(p^{2}) + (p+1) \cdot T_{p \mathcal{I}} \right] \Hoph(p^r) \\
		&- p \cdot T_{p \mathcal{I}} \left[ \Hoph(p^r) + (p + \delta_{r=2}) \cdot T_{p \mathcal{I}}  \Hoph(p^{r-2}) + \Hoph(p^r) \right] \\
		=\;&	\left[\Hoph(p^{2}) + (1-p) \cdot T_{p \mathcal{I}} \right] \Hoph(p^r) - (1 + \delta_{r=2} p^{-1}) \left( p \cdot T_{p \mathcal{I}} \right)^2 \Hoph(p^{r-2}). 
	\end{align*}
\end{proof}

The Hecke operators introduced above not only constitute an abstract algebra 
but also operate on spaces of modular forms. 
Their action on Fourier coefficients of modular forms facilitates the derivation of product expansions for associated $L$-series, 
which motivates their examination in this work.
The theory for scalar-valued modular forms is well known, 
however, naively extending the explicit construction of Hecke operators for the scalar-valued case to vector-valued modular forms fails, 
due to the Weil representation $\dwrep_{L}$ not being extendable to $\GL_{2}(\Z/\level(L)\Z)$ as a linear action. 
An alternative construction circumventing this issue is presented in \cite{BruinierStein} and is consistent with classical Hecke operators in the scalar-valued case. 
We will briefly sketch the content necessary for our proceedings. 

\begin{de}\label{de:action_deltam2}
	For $m \in \N$ and $\delta(m^2) = \gamma g(m^2) \gamma' \in \Gabar g(m^2) \Gabar$ and $\lambda \in \Df$ set 
	\[
		\efr_{\lambda} \vert_{\Df} g(m^2) \coloneq e_{m \lambda}, \qquad \mathfrak{e}_\lambda \vert_{\Df} \delta(m^2) \coloneq \mathfrak{e}_\lambda \vert_{\Df} \gamma \vert_{\Df} g(m^2) \vert_{\Df} \gamma'. 
	\]
\end{de}

That the notation above is well-defined is a non-trivial fact (cf.~ \cite[Sec.~5]{BruinierStein}). 
The same source established that the following operators define endomorphisms on $\MF_{L,k}$.

\begin{de}\label{de:VV_Heckeops_general}
For representatives $\delta_{i} \in \MGLtwop(\Q)$ such that 
\[
	\Gabar g(m^2) \Gabar = \bigsqcup_{i} \Gabar \delta_{i}
\]
declare the action of the Hecke operator $\Hoph(m^2)$ on $f = \sum_{\lambda \in \Df} f_{\lambda} \efr_{\lambda} \in \MF_{L,k}$ as 
\begin{align}
	f \mapsto f\vert_{L,k} \Hoph(m^2) \coloneq m^{k-2} \sum_{i} \sum_{\lambda \in \Df} (f_{\lambda} \vert_{k} \delta_{i}) \otimes (\efr_{\lambda} \vert_{\Df} \delta_{i}).
\end{align}
\end{de}

Concerning the structure of the generated Hecke algebra we find that 
for \(m,n \in \N\) with $\gcd(m,n) = 1$ it is straightforward from Definition~\ref{de:VV_Heckeops_general} 
and its well definedness to verify the relation~$\Hoph((mn)^2) = \Hoph(m^2) \Hop(n^2)$. 
However, self-adjointness has to be computed. 

Recall the element $g(m^2) = \left(\sm{m^2}{0}{0}{1} , 1 \right) \in \MGLtwop(\Q)$ from~\eqref{eq:def_gm2_metaplecticcase} 
and define the following associated element
\begin{align}\label{eq:de:gd2_ast}
	g^{\ast}(m^2) \coloneq \left( \m{1}{0}{0}{m^2}, m \right) = \Selmp g(m^2) \Selmp^{-1} \in \Gabar g(m^2) \Gabar. 
\end{align}

\begin{lemma}\label{lem:gd2_adjoint_on_discgroup}
	For $\lambda \in \Df$ and $m \in \N$ we find 
	\begin{align}
		\efr_{\lambda} \vert_{\Df} g^{\ast}(m^2) = \sum_{\substack{\sigma \in \Df \\ m \sigma = \lambda}} \efr_{\sigma}
	\end{align}
	and conclude for the standard scalar product on $\C[\Df]$ and $v_1$, $v_2 \in \C[\Df]$ that 
	\begin{align}
		\langle v_1 \vert_{\Df} g(m^2) , v_2\rangle = \langle v_1 , v_2 \vert_{\Df} g^{\ast}(m^2) \rangle, 
	\end{align}
	meaning these operators are adjoint to each other: $\left(\vert_{\Df} g(m^2)\right)^\ast = \vert_{\Df} g^{\ast}(m^2)$. 
\end{lemma}

As a consequence, we obtain the following adjointness on the space of modular forms.

\begin{cor}\label{cor:VVMF_slash_gd2_adjoint}
	Let $f,h \in \CF_{L,k}(\Gabar)$, $m \in \N$ and $g(m^2)$ as above. 
	Then we find for every $\alpha \in \Gabar g(m^2) \Gabar$ that 
	\begin{align}
		\langle f \vert_{L,k} \alpha, h \rangle = \langle f , h \vert_{L,k} \det(\alpha) \alpha^{-1} \rangle. 
	\end{align}
	The expression does not depend on the particular choice of $\alpha$ in the double coset. 
\end{cor}

\begin{proof}
	Recall that 
	\begin{align*}
		\langle f \vert_{L,k} \alpha, h \rangle 
		=	\sum_{\lambda, \mu \in \Df} \langle f_{\lambda} \vert_{k} \alpha , h_{\mu} \rangle \langle \efr_{\lambda} \vert \alpha , \efr_{\mu} \rangle. 
			\end{align*}
	Assume first that $\alpha \in \Gabar$. 	Note that the scalar-valued case of Corollary~\ref{cor:VVMF_slash_gd2_adjoint} is well known for $\Gamma(N)$ modular forms. 
	Recall that the component functions satisfy $f_{\lambda}, h_{\mu} \in \CF_{k}(\Gamma(\level(L)))$ and  
	combine this with the unitaricity of the Weil representation for the desired result. 
	\\
	In the general case, by \mbox{Definition~\ref{de:action_deltam2}} 
	and the invariance of $f$ and $h$ under $\Gabar$, we may reduce to the choice $\alpha = g(m^2)$. 
	This, however is solved by the same argument as above combined with Lemma~\ref{lem:gd2_adjoint_on_discgroup} in the components of the group algebra.
\end{proof}

Combining the results above leads to the following observation.

\begin{thm}[{\cite[Thm.~5.6 p.~269]{BruinierStein}}]\label{thm:Tm2starproperties}
	The operation of $\Hoph(m^2)$ for arbitrary $m \in \N$ determines a linear, 
	cusp form preserving action on the space of modular forms. 
	Further, it is identified to be self-adjoint with respect to the Petersson scalar product 
	and the operators satisfy 
		\[
			\Hoph(m_1^2) \Hoph(m_2^2) = \Hoph(m_1^2m_2^2) 
		\]
	for coprime elements $m_1,m_2 \in \N$. 
\end{thm}

We install the following piece of notation in order to state two corollaries. 

\begin{de}
	For $M \in \N$ 
	let $\Heckealgminimal_{M}^{2}$ denote the Hecke algebra generated by all $\Hoph(p^2)$ 
	and $T_{p \mathcal{I}}$ for primes $p$ with $p \nmid M\level(L)$. 
\end{de}

\begin{cor}\label{cor:Heckealgminimal_properties}
	For any $M \in \N$, the algebra $\Heckealgminimal_{M}^{2}$ acts on $\CF_{L,k}$ as a commutative algebra of 
	cusp form preserving, self-adjoint linear operators. 
\end{cor}

The Corollary follows from Theorem~\ref{thm:Tm2starproperties}. 
In conjunction with Lemma~\ref{lem:Tpr2_recursion_relation} we conclude that for any prime $p$ 
with $p \nmid M\level(L)$, the algebra $\Heckealgminimal_{M}^{2}$ contains $\Hoph(p^{2r})$ for all~$r \in \N$.

Corollary~\ref{cor:Heckealgminimal_properties} immediately implies the existence of simultaneous eigenforms. 

\begin{cor}\label{cor:VVMF_Hoph_simEF}
	Let $M \in \N$ and assume $W \leq \CF_{L,k}$ is a nonzero subspace that is invariant under the 
	operation of~$\Heckealgminimal_{M}^{2}$. 
	Then $W$ has an orthonormal basis consisting of simultaneous eigenforms of~$\Heckealgminimal_{M}^{2}$. 
\end{cor}

Now that the existence of eigenforms is certain, a bound on the respective eigenvalues was advantageous.
An effective method for obtaining such bounds was discovered by Kohnen~\cite{Kohnen1987} 
even though the bound itself, at least for scalar-valued Siegel Modular forms, had been known before (cf.~\cite[Sec.~6]{Weissauer1984}).

\begin{lemma}\label{lem:VVMF_Kohnenbound}
	Let $m \in \N$ and $f \in \CF_{L,k}(\Gabar)$ be a nonzero eigenform of $\Hophk(m^2)$ with eigenvalue $\Hev_{m}$. 
	If $\Dftors{m}$ denotes the $m$ torsion of $\Df$, then 
	\begin{align}
		\lvert \Hev_{m} \rvert < m^{k-2} \lvert \Gabar \backslash \Gabar g(m^2) \Gabar \rvert \cdot \lvert {_{m}\Df} \rvert. 
	\end{align}
	In the particular case where $m = p$ is a prime, we find 
	\begin{align}
		\lvert \Hev_{p} \rvert < p^{k-1} \cdot (p+1) \cdot \lvert {_{p}\Df} \rvert. 
	\end{align}
\end{lemma}

\begin{proof}
	Recall the definition of $g^{\ast}(m^2)$ from~\eqref{eq:de:gd2_ast} 
	and note that for $\lambda \in \Df = L'/L$ we have by Lemma~\ref{lem:gd2_adjoint_on_discgroup} that 
		\[
			\efr_{\lambda} \vert g(m^2) \vert g^\ast(m^2) = \efr_{m \lambda} \vert g^{\ast}(m^2) = \sum_{\substack{\sigma \in \Df \\ m \sigma = m \lambda}} \efr_{\sigma} = \sum_{\sigma \in {_{m}\Df}} \efr_{\lambda + \sigma}. 
		\]
				First, note that Cauchy--Bunyakowsky--Schwarz implies in conjunction with 
	Corollary~\ref{cor:VVMF_slash_gd2_adjoint} that 
	\begin{align*}
		\| f \vert_{L,k} g(m^2)\|_2^2 &\leq \|f \vert_{L,k} g(m^2) g^{\ast}(m^2)\|_2 \cdot \| f\|_2.
	\end{align*} 
	Further, 
	\begin{align*}
		&\|f \vert_{L,k} g(m^2) g^{\ast}(m^2)\|_2^2  \\
		=\;	& \sum_{\lambda, \mu \in \Df} \langle f_{\lambda} \vert_{k} g(m^2) g^{\ast}(m^2), f_{\mu} \vert_{k} g(m^2) g^{\ast}(m^2) \rangle \langle \efr_{\lambda} \vert_{k} g(m^2) g^{\ast}(m^2) , \efr_{\mu} \vert_{k} g(m^2) g^{\ast}(m^2)\rangle \\
		=\; & \sum_{\lambda, \mu \in \Df} \langle f_{\lambda}, f_{\mu} \rangle \lvert {_{m}\Df}\rvert^{2} \langle \efr_{\lambda} , \efr_{\mu} \rangle \\
		=\; & \lvert {_{m}\Df} \rvert^2 \|f\|_2^2.
	\end{align*}
	Here,~${_{m}\Df}$ and~${^{m}\Df}$ denote the $m$-torsion and $m$-multiples in $\Df$, respectively.
	Also compare~\eqref{eq:Df_torsion_mults}. 
						Second, 
	\begin{align*}
		\Hev_{m} \cdot \langle f , f \rangle 	&=	\left\langle \Hoph(m^2) f , f \right\rangle 	\\
		&= m^{k-2} \!\! \sum_{h \in \Gabar \backslash\Gabar g(m^2) \Gabar} \left\langle f\vert_{L,k} h  , f \right\rangle \\
		&= m^{k-2} \cdot \lvert \Gabar \backslash\Gabar g(m^2) \Gabar \rvert \cdot \left\langle f\vert_k g(m^2)  , f \right\rangle,
	\end{align*}
	where we have used, in the last line that by Corollary~\ref{cor:VVMF_slash_gd2_adjoint}, 
	the value of the inner product does not depend on the representative in the double coset. 
	Again, by Cauchy--Bunyakowsky--Schwarz and the above calculations, we infer 	\begin{align}\label{eq:CSHev:lem:VVMF_Kohnenbound}
		\frac{|\Hev_{m}|}{\left|\Gabar \backslash\Gabar g(m^2) \Gabar \right|} 	&\leq	m^{k-2}  \frac{ \|f \vert_{L,k} g(m^2) \|_2 \cdot \|f \|_2}{\|f\|_2^2}  =		m^{k-2} \cdot  \lvert {_{m}\Df} \rvert. 
	\end{align}
	This settles the bound in question with possible equality. 
	In fact, in \eqref{eq:CSHev:lem:VVMF_Kohnenbound}, we have equality if, and only if, $f \vert_{L,k} g(m^2)$ and $f$ are proportional to each other. 
	Assume this was the case and recall the Fourier expansion \eqref{eq:VVMF_FourierExp} of $f$, meaning there was some constant $C \in \C^{\times}$ with  
	\begin{align}\label{eq:multiple:lem:VVMF_Kohnenbound}
		C \cdot \sum_{\lambda \in \Df} \sum_{n \in \qd(\lambda) + \Z} a(\lambda,n) \; e\left(\tau n \right) \otimes \mathfrak{e}_{\lambda} = 
		\sum_{\lambda \in \Df} \sum_{n \in \qd(\lambda) + \Z} a(\lambda,n) \; e\left(\tau nm^2 \right) \otimes \mathfrak{e}_{m\lambda}. 
	\end{align}
		Select an arbitrary index $(\lambda,n)$. 
	Note that \eqref{eq:multiple:lem:VVMF_Kohnenbound} remains, by iteratively applying $\vert_{L,k}g(m^2)$, true, 
	if we replace the pair~\((C,m)\) by \((C^l,m^l)\) for arbitrary $l \in \N$. 
	Evidently, there exists some~$l \in \N$, such that $m^{2l} > n\level(L)$, which implied through coefficient comparison that $a(\lambda,n) = 0$, 
	thus establishing the required contradiction. 
	\\
	Finally, for the choice $m = p$, there are $p(p+1)$ cosets in the class $ \Gabar \backslash\Gabar g(m^2) \Gabar$ (cf.\ \cite[p.~451]{ShimuraMF_HalfIntegralWeight}), finishing the proof. 
\end{proof}

\subsection{Hecke action on Fourier coefficients}
On the level of Fourier coefficients of modular forms Bruinier and Stein provide
the following formulae for the operation of~\(\Hoph(p^2)\).

\begin{prop}[{\cite[Prop.~4.3 p.~258]{BruinierStein}}]\label{prop:VVMF_Heckeops_Fouroercoefficients_even}
  Let $L$ have even signature and $p$ be a prime not dividing $\level(L)$. Let
  $f \in \MF_{L,k}$ and denote the Fourier expansion as in \eqref{eq:VVMF_FourierExp}. Then
  \[
	f\vert_{L,k} \Hophk(p^2) = \sum_{\lambda \in \Df} \sum_{n \in \Z + \qd(\lambda)} b(\lambda,n) q^n \otimes \efr_{\lambda},
  \]
  where
  \begin{align}
    b(\lambda,n) = p^{2k-2} a(\lambda/p,n/p^2) 
    + \frac{G_{\Df}(1)}{G_{\Df}(p)} p^{k-2} (p \delta_{p \mid n} - 1) a(\lambda,n) 
    + a(p \lambda, p^2 n)
  \end{align}
  and
  \[
	\delta_{p \mid n} =
	\begin{cases}
      1, & p \mid n, \\
      0, & p \nmid n.
	\end{cases}
  \]
  Moreover, we understand that $a(\lambda/p,n/p^2) = 0$ if $p^2 \nmid n$.
\end{prop}

Here, for $d \in \N$, the Gauss sum $G_{\Df}(d) = \sum{\lambda \in \Df} e(d \qd(\lambda))$ is
further described in Definition~\ref{de:GausssumDfd} and below. Bruinier and Stein have also
considered the case of odd signature.

\begin{thm}[{\cite[Thm 4.10 p~263]{BruinierStein}}]\label{thm:VVMF_Heckeops_Fouroercoefficients_odd}
  Let $L$ have odd signature and $p$ be a prime not dividing $\level(L)$. Let
  $f \in \MF_{L,k}$ and denote the Fourier expansion as in \eqref{eq:VVMF_FourierExp}. Then
  \[
    f\vert_{L,k} \Hophk(p^2) = \sum_{\lambda \in \Df} \sum_{n \in \Z + \qd(\lambda)} b(\lambda,n) q^n \otimes \efr_{\lambda},
  \]
  where
  \begin{align}
    b(\lambda,n) = p^{2k-2} a(\lambda/p,n/p^2) 
    + \epsilon_{p}^{\sig(\Df) + (\tfrac{-1}{\lvert \Df \rvert})} \left(\frac{p}{\lvert \Df \rvert 2^{\sig(\Df)} }\right) \left(\frac{-n}{p}\right) p^{k-3/2} a(\lambda,n) 
    + a(p \lambda, p^2 n)
  \end{align}
  and for an odd integer $d$ we set
  \[
	\epsilon_{d} =
	\begin{cases}
      1, & d \equiv 1 \mod 4, \\
      i, & d \equiv -1 \mod 4.
	\end{cases}
  \]
  Moreover, we understand that $a(\lambda/p,n/p^2) = 0$ if $p^2 \nmid n$.
\end{thm}

Note that for our purposes a formula that is valid for all primes is required.
An advance in that direction has been made in \cite[Thm.~5.4 p.~246]{SteinHecke} for all odd
primes. However, it has been pointed out by \cite{Bouchard2018} that the cited Theorem is
flawed. The original author has submitted a correction \cite{Stein2021}, representing the
previously arising, but flawed, coefficients in a more abstract fashion in terms
of certain representation numbers of the lattice that also appear as
coefficients of Eisenstein series in~\cite[p.~447]{BruinierKuss2001}.

These representation numbers, however, are not explicit enough to derive product
expansions of $L$-series. Hence, the coefficients have to be calculated more
explicitly based on the exposition \cite{SteinHecke}. To this end we have to let $\Hoph(p^2)$
explicitly act on $f \in \MF_{L,k}$ where the operation is given by letting right
cosets of
\[
  \Gabar g(p^2) \Gabar, \textnormal{ where } g(p^2) = \left( \m{p^2}{0}{0}{1} , 1 \right)
\]
operate as in Definition~\ref{de:VV_Heckeops_general}. Recall that by \cite[p.~263]{BruinierStein} or \cite[p.~451]{ShimuraMF_HalfIntegralWeight} we find
\begin{align}
  \Gabar g(p^2) \Gabar = \Gabar g(p^2) \cup \bigcup_{h (p)^{\ast}} \Gabar \beta_{h} \cup \bigcup_{b (p^2)} \Gabar \gamma_{b}, 
\end{align}
where
\begin{align}\label{eq:betah_gammab}
  \beta_h 
  = 
  \left(
  \left(
  \begin{array}{cc}
    p & h \\
    0 & p \\
  \end{array}
  \right)
  , \sqrt{p} 
  \right),
  \qquad
  \gamma_b
  =
  \left(
  \left(
  \begin{array}{cc}
    1 & b  \\
    0 & p^2 \\
  \end{array}
  \right)
  , p
  \right).
\end{align}
																																	
The operation on the scalar component functions of the vector-valued modular
form via the Petersson slash operator is well known and understood. As a
consequence, it suffices to describe the action of the elements in \eqref{eq:betah_gammab} on the
discriminant group. The action of $\gamma_{b}$ is presented in \cite[Thm.~5.1 p.~243]{SteinHecke}
in greater generality than required.

\begin{prop}\label{prop:Stein_action_gammab}
  Let $p$ be an odd prime, $b \in \Z/p^2\Z$, and $\lambda \in \Df$. Then $\gamma_{b}$ acts in
  the following fashion:
  \begin{align}
    \efr_{\lambda} \vert_{\Df} \gamma_{b} = \sum_{\substack{\nu \in \Df \\ p \nu = \lambda}} e(-b \qd(\nu)) \efr_{\nu}. 
  \end{align}
\end{prop}

The action of $\beta_{h}$, however, will be extracted from \cite[Prop.~5.3 p.~26]{Bouchard2018}
where we select the special case of $l = a = 1$.

\begin{prop}\label{prop:Creutzig_action_betah}
  Let $p$ be an odd prime and $h \in (\Z/p\Z)^{\times}$, write $m = \rk(L)$, and select
  $\lambda \in \Df$. Then
  \begin{equation}\label{eq:prop:Creutzig_action_betah}
    \efr_{\lambda} \vert_{\Df} \beta_{h} = p^{-\dimV/2} \sum_{\substack{\delta \in \Df(p) \\ p \delta = \lambda}} e(-h\qd_{p}(\delta)) \efr_{\lambda},
  \end{equation}
  where $\qd_{p}$ denotes $\overline{p \qq} : \Df(p) = L(p)'/L(p) \to \Q/\Z$.
\end{prop}

This will have to be rewritten for our purposes.

\begin{lemma}\label{lem:betah_action_Df}
  With the notation as above, we find
  \[
    \efr_{\lambda} \vert_{\Df} \beta_{h} = p^{-\dimV/2} \delta_{\lambda \in \Dfmult{p}} \cdot e\left( - hp \qq(\posvec_{\lambda/p})\right) G_{L,p}(1,-h) \efr_{\lambda},
  \]
  where $G_{L,p}(n,h) = \sum_{v \in L/p^nL} e\left( \frac{h}{p^n} \qq(v) \right)$ is
  the Gauss sum defined in Definition~\ref{de:Gauss_sums}, $\posvec_{\lambda/p} \in L'$ is some element,
  such that $p \cdot \overline{\posvec_{\lambda/p}} = \lambda$, $\Dfmult{p}$ are the
  $p$-multiples in $\Df$ and $\delta$ denotes a Kronecker symbol.
\end{lemma}

\begin{proof}
  Beginning with \eqref{eq:prop:Creutzig_action_betah} we transform the sum by utilising the following description
  $\Df(p) = p^{-1}L'/L \simeq L'/pL$:
  \[
	p^{-\dimV/2} \sum_{\substack{\delta \in \Df(p) \\ p \delta = \lambda}} e(-h\qd_{p}(\delta)) \efr_{\lambda} = p^{-\dimV/2} \sum_{\substack{\mu \in L'/pL \\ \mu = \lambda \mod L}} e\left( -\frac{h}{p}\qq(\mu) \right) \efr_{\lambda},
  \]
  where $\qq(\mu)$ is to be understood as the quadratic form on $L'$ of some
  preimage in $L'$ of $\mu \in L'/pL$. In order for the sum above not to vanish, it
  is necessary that $\lambda \in \Dfmult{p}$ (cf.\ \cite[Lemma 5.2.1]{Barnard2003}). Note that by the
  isomorphism $(L'/pL)/(L/pL) \simeq \Df$, the solutions in $\mu$ of $\mu = \lambda \mod L$ are
  represented by $\lambda + L/pL$ so that the expression above equals
  \[
	p^{-\dimV/2} \delta_{\lambda \in \Dfmult{p}} \cdot \sum_{v \in L/pL} e\left( -\frac{h}{p}\qq(\posvec_{\lambda} + v) \right) \efr_{\lambda},
  \]
  where $\posvec_\lambda$ is some lift of $\lambda$ to $L'$. In fact, the sum does not
  depend on the chosen representative, as the shift may be absorbed into $v$. We
  extract the dependency on $\lambda$ from the sum. To this end, let $\nu \in \Df$ be a
  $p$-th root of $\lambda$, i.e.\ $p \cdot \nu = \lambda \in \Df$ and denote a lift of~$\nu$ to~$L'$
  by~$\posvec_{\nu}$. Then
  \begin{align*}
    \delta_{\lambda \in \Dfmult{p}} \sum_{ v \in L/pL } e \left( - \tfrac{h}{p} \qq(\ell_\lambda + v)\right) 
    = \delta_{\lambda \in \Dfmult{p}} \cdot e \left( - \tfrac{h}{p} \qq(p\ell_\nu) \right) \cdot \sum_{ v \in L/pL } e \left( - \tfrac{h}{p} \qq(v) \right)  \tag{$\ast$}.
  \end{align*}
  In fact, since $\lambda \in \Dfmult{p}$ there is $\nu \in \Df$ such that $p \nu = \lambda$. Hence,
  there must be $\ell_\nu \in L'$ and $\delta \in L$ such that $p \ell_\nu + \delta = \ell_\lambda$.
  Nevertheless, the term $\delta \in L$, may be absorbed in~$v$ and will henceforth be
  ignored resulting in replacing $\qq(\ell_\lambda + v)$ by $\qq(p \ell_\nu + v)$. Then,
  however,
  \[
	\qq(p\ell_\nu + v) = \qq(p\ell_\nu) + p \bilf(\ell_\nu, v) + \qq(v) \equiv \qq(p \ell_\nu) + \qq(v) \mod p.
  \]
  This, in fact, means that the reduction above is valid.
\end{proof}

Following the methodology of \cite{SteinHecke}, we derive explicit formulae for the action of
Hecke operators on the Fourier expansion of vector-valued modular forms. In
fact, the following is a modified version of \cite[Thm.~5.4 p.~246]{SteinHecke}.

\begin{prop}\label{prop:VVMF_Heckeops_Fouriercoefficients_bad_primes}
  Let $p$ be an odd prime and $f \in \MF_{L,k}$ with Fourier expansion
  \[
	f(\tau) = \sum_{\lambda \in \Df} \sum_{n \in \qd(\lambda) + \Z} a(\lambda,n) \cdot e(n\tau) \efr_{\lambda}.
  \]
  Then the Fourier coefficients of $f \vert_{L,k} \Hophk(p^2)$ are given by
  \begin{align*}
    b(\lambda,n) =\;	& p^{2(k-1)} \delta_{\lambda \in \Dfmult{p}} \sum_{\substack{\lambda' \in \Dftors{p} \\ n - p^{2} \qd(\lambda/p + \lambda') \in p^{2} \Z }} a\left( \lambda/p + \lambda',\frac{n - p^{2} \qd(\lambda/p + \lambda')}{p^{2}} + \qd(\lambda/p + \lambda') \right) \\
    +\;	&  p^{k-2} \delta_{\lambda \in \Dfmult{p}} K_{L,p}   g_p\left[1,{_-\chi_p}^{R}, n- \qq(p \cdot \posvec_{\lambda/p})\right] \; a\left(\lambda, n \right)  \\
    +\;	&	a(p \lambda,p^{2}n).
  \end{align*}
  Here, $K_{L,p} = p^{-m/2} G_{L,p}(1,1)$ and $g_p[r,\chi,n]$ are the Gauss sums
  from Definition~\ref{de:Gauss_sums}, while $\posvec_{\lambda/p} \in L'$ is any element such that
  $\overline{p \posvec_{p/\lambda}} = \lambda \in \Df$. Further, $\Dftors{p}$ denotes the
  $p$-torsion of $\Df$, $\Dfmult{p}$ the $p$-multiples, and
  $\delta_{\lambda \in \Dfmult{p}} = 1$ if $\lambda \in \Dfmult{p}$ and $0$, otherwise. In addition,
  $_{-}\chi_{p}(h) = \left( \tfrac{-h}{p} \right)$ is the Legendre symbol and the
  exponent $R = R_{p}^1$ has been presented in Definition~\ref{de:nk_and_R}.
\end{prop}

\begin{proof}
  Although the proof structure remains consistent for higher powers of $p^2$, we
  focus specifically on the operation of $\Hophk(p^2)$ to maintain notational
  clarity. We have
  \begin{align}
    f \vert_{k,L} \Hophk(p^{2}) 
    =& \; p^{k-2} \sum_{\lambda \in \Df} f_\lambda \vert_k g(p^2) \; \mathfrak{e}_\lambda \vert_L g(p^2) \label{eq:1:prop:VVMF_Heckeops_Fouriercoefficients_bad_primes} \\
     &+ p^{k-2}\sum_{h \in (\Z/p \Z)^\times} \sum_{\lambda \in \Df} f_\lambda \vert_k \beta_{h} \; \mathfrak{e}_\lambda \vert_L \beta_{h} \label{eq:2:prop:VVMF_Heckeops_Fouriercoefficients_bad_primes}\\
     &+ p^{k-2}\sum_{b \in \Z/p^{2} \Z} \sum_{\lambda \in \Df} f_\lambda \vert_k \gamma_b \; \mathfrak{e}_\lambda \vert_L \gamma_b. \label{eq:3:prop:VVMF_Heckeops_Fouriercoefficients_bad_primes}
  \end{align} 
  For the first part \eqref{eq:1:prop:VVMF_Heckeops_Fouriercoefficients_bad_primes} we find
  \begin{align*}
    &\; p^{k-2} \sum_{\lambda \in \Df} f_\lambda \vert_k g(p^2) \; \mathfrak{e}_\lambda \vert_{\Df} g(p^2) \\
    =	&\; p^{2(k-1)} \sum_{\lambda \in \Df} f_\lambda\left( p^{2} \tau \right)  \; \mathfrak{e}_{p\lambda} \\ 
    =	&\; p^{2(k-1)} \sum_{\lambda \in \Df} \sum_{n \in \Z + \qd(\lambda)} a(\lambda,n) \cdot e\left( p^{2} n\tau \right)  \; \mathfrak{e}_{p\lambda} \\
    =	&\; p^{2(k-1)} \sum_{\lambda \in \Dfmult{p}} \sum_{\lambda' \in \Dftors{p}} \sum_{n \in p^{2}\left( \Z + \qd(\lambda/p + \lambda') \right)} a(\lambda/p + \lambda',n/p^{2}) \cdot e\left(n\tau \right)  \; \mathfrak{e}_{\lambda}.
  \end{align*}
  Here, $\lambda/p$ is any fixed $p$-th root of $\lambda$ in $\Df$. Also compare~\eqref{eq:Df_torsion_mults}. The
  inner sum in the expression above may be rewritten as
  \[
	\sum_{n - p^{2} \qd(\lambda/p + \lambda') \in p^{2} \Z } a\left( \lambda/p + \lambda',\frac{n - p^{2} \qd(\lambda/p + \lambda')}{p^{2}} + \qd(\lambda/p + \lambda') \right) \cdot e\left(n\tau \right) \; \mathfrak{e}_{\lambda}
  \]
  in order to separate the integer part of the index from the rest.
  \\
  Next, we evaluate the sum \eqref{eq:2:prop:VVMF_Heckeops_Fouriercoefficients_bad_primes} involving the action of the elements $\beta_{h}$:
  \begin{align*}
    & \;	p^{k-2} \sum_{h \in (\Z/p \Z)^\times} \sum_{\lambda \in \Df} f_\lambda \vert_k \beta_{h} \; \mathfrak{e}_\lambda \vert_L \beta_{h}.
  \end{align*}
  First, we utilise Lemma~\ref{lem:betah_action_Df} to obtain
  \begin{align*}
    &	\;	p^{k-2} \sum_{h \in (\Z/p \Z)^\times} \sum_{\lambda \in \Df} f_\lambda \vert_k \beta_{h} \; \mathfrak{e}_\lambda \vert_L \beta_{h} \\
    =	&	\;	p^{2(k-1)} \sum_{h \in (\Z/p \Z)^\times} \sum_{\lambda \in \Dfmult{p}} p^{-k} f_\lambda \left( \frac{p \tau + h}{p} \right) \cdot p^{-\tfrac{\dimV}{2}} e\left( - hp \qq(\posvec_{\lambda/p})\right)  G_{L,p}(1,-h)\efr_{\lambda}. \tag{$\ast$}
  \end{align*}
  Here, $\posvec_{\mu}$ for some $\mu \in \Df$ is understood to be an arbitrary but
  fixed lift of $\mu$ to $L'$ and $\lambda/p$ is any fixed $p$-th root of $\lambda$ in $\Df$.
  In $(\ast)$, the dependency of the Gauss sum $G_{L,p}(1,-h)$ on $h$ may be
  extracted. In fact, an application of Remark~\ref{rem:GLpnh_as_GLpn1} yields
  \begin{equation}\label{eq:proof2:prop:VVMF_Heckeops_Fouriercoefficients_bad_primes}
    G_{L,p}(1,-h) = \left( \frac{-h}{p} \right)^{R} \cdot G_{L,p}(1,1)
  \end{equation}
  with $R = R_{p}^{1}$ as in Definition~\ref{de:nk_and_R}. With these computations, and by
  writing~${_{-}\chi_{p}}(h)$ for the Legendre symbol appearing in~\eqref{eq:proof2:prop:VVMF_Heckeops_Fouriercoefficients_bad_primes}, we find the
  following expression for ($\ast$):
  \begin{align*}
    &\; p^{k-2- \dimV/2} G_{L,p}(1,1) \sum_{h \in (\Z/p \Z)^\times} \sum_{\lambda \in \Dfmult{p}} f_\lambda \left( \frac{p \tau + h}{p} \right) \cdot e\left( - hp \qq(\posvec_{\lambda/p})\right) {_{-}\chi_{p}}(h)^{R} \efr_{\lambda} \\
    =&\; p^{k-2- \dimV/2} G_{L,p}(1,1) \sum_{h \in (\Z/p \Z)^\times} \sum_{\lambda \in \Dfmult{p}} \sum_{n \in \qd(\lambda) + \Z} a(\lambda,n) \, e\!\left(n\tau\right) e\left(\tfrac{h}{p} [n - \qq(p \cdot \posvec_{\lambda/p})]\right) {_{-}\chi_{p}}(h)^{R} \efr_{\lambda}. 
  \end{align*}
  Set $K_{L,p} \coloneq p^{-\dimV/2} G_{L,p}(1,1)$ and compare Definition~\ref{de:Gauss_sums} to
  recognise the appearing term above as the following Gauss sum:
  \[
	g_{p}\left[1,{_{-}\chi_p^R}, n - \qq(p \cdot \posvec_{\lambda/p})\right] = \sum_{h \in (\Z/p \Z)^\times} e\left(\tfrac{h}{p} [n - \qq(p \cdot \posvec_{\lambda/p})]\right) \chi_{p}(-h)^{R}
  \]
  which implies the term in ($\ast$) to equal
  \[
	p^{k-2} K_{L,p} \sum_{\lambda \in \Dfmult{p}} \sum_{n \in \qd(\lambda) + \Z} a(\lambda,n) g_{p}\left[1,{_{-}\chi_p^R}, n - \qq(p \cdot \posvec_{\lambda/p})\right] e\left(n\tau\right) \efr_{\lambda}.
  \]
  Last, the sum in \eqref{eq:3:prop:VVMF_Heckeops_Fouriercoefficients_bad_primes} involving the action of $\gamma_b$ is evaluated, using
  Proposition~\ref{prop:Stein_action_gammab}.
  \begin{align*}
    & \;	p^{k-2}\sum_{b \in \Z/p^{2} \Z} \sum_{\lambda \in \mathcal{L}} f_\lambda \vert_k \gamma_b \; \mathfrak{e}_\lambda \vert_L \gamma_b \\
    =	& \;	p^{-2}\sum_{b \in \Z/p^{2} \Z} \sum_{\lambda \in \mathcal{L}} f_\lambda\left( \frac{\tau + b}{p^{2}} \right) \cdot  \sum_{\substack{\nu \in \mathcal{L} \\ p \nu = \lambda}} e\left(- b \qd(\nu) \right) \efr_\nu \\ 
    =	& \;	p^{-2} \sum_{\lambda \in \mathcal{L}} \sum_{\substack{\nu \in \mathcal{L} \\ p \nu = \lambda}} \sum_{n \in \qd(\lambda) + \Z } a(\lambda,n) \sum_{b \in \Z/p^{2} \Z} e \left( b\frac{n - \qq(p\posvec_{\nu})}{p^{2}}\right) e\left( \frac{n\tau}{p^{2}} \right) \efr_\nu.  \tag{$\ast'$}
  \end{align*}
  Note that
  \begin{align*}
    \sum_{b \in \Z/p^{2} \Z} e \left(\frac{n - \qq(p\posvec_{\nu})}{p^{2}}\right)^{b} = p^{2} \delta_{n \equiv \qq(p\posvec_{\nu}) \mod p^{2} \Z}.
  \end{align*}
  Further, $p \nu = \lambda$ may only be fulfilled if $\lambda \in \Dfmult{p}$. The solutions to
  this equation are exactly the elements $\lambda/p + \delta$ where $\delta \in \Dftors{p}$ and
  $\lambda/p$ is a fixed inverse image. However, by~\eqref{eq:Df_torsion_mults}, these are all elements in
  $\Df$. In total, we obtain the following expression for $(\ast')$:
  \begin{align*}
    & \;	\sum_{\lambda \in \Dfmult{p}} \sum_{\substack{\nu \in \mathcal{L} \\ p \nu = \lambda}} \sum_{\substack{n \in \Z + \qd(\lambda) \\ n - \qq(p\posvec_{\nu}) \in p^{2} \Z}} a(\lambda,n)  e\left( \frac{n\tau}{p^{2}} \right)  \mathfrak{e}_\nu \\ 
    =	&	\; \sum_{\mu \in \mathcal{L}} \; \sum_{n - p^2\qq(\posvec_{\mu}) \in p^{2} \Z} a(p \mu,n)  e\left( \frac{n\tau}{p^{2}} \right)  \mathfrak{e}_{\mu} \\
    =	&	\; \sum_{\mu \in \mathcal{L}} \; \sum_{m \in \Z + \qd(\mu)} a(p \mu,p^{2}m)  e\left( m \tau \right)  \mathfrak{e}_{\mu}. \qedhere
  \end{align*}
\end{proof}

As a consequence of the above, we conclude the case of good primes (different
from $2$), which essentially recovers Proposition~\ref{prop:VVMF_Heckeops_Fouroercoefficients_even}.

\begin{cor}[Bruinier, Stein]\label{cor:Recover_Bruinier_Stein_even}
  Let $L$ have even signature, $p$ be an odd prime not dividing $\level(L)$ and
  $f \in \MF_{L,k}$ with Fourier coefficients $a(\lambda,n)$. Denote the Fourier
  coefficients of $f \vert_{L,k} \Hophk(p^2)$ by $b(\lambda,n)$. Then we find
  \begin{align}\label{eq:Recover_Bruinier_Stein_even}
    b(\lambda,n) = p^{2k-2} a(\lambda/p,n/p^2) + \frac{G_\Df(1)}{G_\Df(p)} p^{k-2} (\delta_{p \mid n} p -1) a(\lambda,n) + a(p\lambda,p^2n).
  \end{align}
\end{cor}

\begin{proof}
  In the present case, the formulae of Proposition~\ref{prop:VVMF_Heckeops_Fouriercoefficients_bad_primes} read
  \begin{align*}
    b(\lambda,n) =\;\;	& p^{2(k-1)} a\left( \lambda/p, n/p^2 \right) \\
                    &+ p^{k-2-m/2} G_{L,p}(1,1)   g_p\left[1,{_-\chi_p}^R, n- q(p \cdot \lambda/p)\right] \; a\left(\lambda, n \right)  \\
                    &+ a(p \lambda,p^{2}n).
  \end{align*}
  As a consequence, only the middle term has to be evaluated, meaning the sums
  $g_p$ and $G_{L,p}$ are to be understood. Recall that Equation \eqref{eq:GDdGLp} in Remark~\ref{rem:GDdGLp}
  yields the desired quotient of Gauss sums. Hence, only proving that the second
  Gauss sum $g_p$ reduces to $(\delta_{p \mid n} p - 1)$ is required. Since $R=m$ is
  even in this context, it follows that $\chi_p^{R} = \chi_1$ reduces to the trivial
  character. Thus, Remark~\ref{rem:gpnchih_computed} implies
  \[
	g_p[1,\chi_1,n-\qq(p \cdot \posvec_{\lambda/p})] = \delta_{p \mid n - \qq(p \cdot \posvec_{\lambda/p})} p - 1.
  \]
  In fact, the denominator of the individual fractions does not contain $p$,
  since $p \nmid \level(L)$. Hence, investigating the numerator for divisibility by
  $p$ suffices. On the other hand, $\qq(p \cdot \posvec_{\lambda/p})$ is clearly divisible
  by $p$, so that the question reduces to whether $p \mid n$ or not.
\end{proof}

\begin{rem}
  Proposition~\ref{prop:VVMF_Heckeops_Fouriercoefficients_bad_primes} suggests a potential approach for estbalishing bounds on the
  growth of Fourier coefficients of vector-valued modular forms by using the
  recursion relation for Fourier coefficients of eigenforms. For progressions in
  good primes, the procedure works straightforwardly, for bad primes, however,
  the situation is more delicate.
\end{rem}

	\section{\(L\)-series}\label{sec:Lseries}

\subsection{Definition and convergence}\label{ssec:Lseries:DefandConv}
	
We recall that we fixed an even non-degenerate $\Z$-lattice \((L,\qq)\), and that for a half-integral number $k \in \tfrac{1}{2} \Z$ 
a modular form $f \in \MF_{L,k}$ has a Fourier expansion as in \eqref{eq:VVMF_FourierExp} with Fourier coefficients $a(\lambda,n)$. 
	
\begin{de}\label{de:symmsqLfunforsublattice}
	For $f \in \MF_{L,k}$, a splitting sublattice $M = L_1 \oplus L_2 \leq L$ with definite $L_1 \otimes_{\Z} \R$, 
	an isotropic element $\eta \in (L'\cap L_2')/(L \cap L_2')$, and $t \in \Q^{\times}$ define the formal series 
	\begin{equation}\label{eq:symmsqLfunforsublattice}
		L_{L_1,\eta,t}(f,s) \coloneq \sum_{0 \neq l \in L_1' \cap L'} \frac{a(\overline{l} + \eta, t \qq(l))}{\qq(l)^{s}}.
	\end{equation} 
\end{de}

\begin{lemma}\label{lem:LL1etarfsconverges}
	Assume $L = L_1 \oplus L_2$ splits with definite $L_1 \otimes_{\Z} \R$ of rank $\dimV_1$ and $2 \leq k \in \tfrac{1}{2}\Z$.
	In case $f \in \MF_{L,k}$, the series $L_{L_1,\eta,r}(f,s)$ converges normally for $\Re(s) > k - 1 + \tfrac{m_1}{2}$, 
	rendering it holomorphic.  
	If, in addition, $f \in \CF_{L,k}$, we find that $L_{L_1,\eta,r}(f,s)$ converges normally 
	for $\Re(s) > \frac{k + m_1}{2} - \boundoptimiser$ with $\sigma = 1/2$ in the even case and $\sigma = 1/4$ in the odd case.
\end{lemma}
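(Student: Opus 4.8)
The plan is to reduce both assertions to the convergence of an Epstein-type zeta series by substituting a pointwise bound on the Fourier coefficients, whose exponent then dictates the abscissa. I may assume $t>0$: since $L_1\otimes_\Z\R$ is definite we have $\qq(l)>0$ for $0\neq l\in L_1'\cap L'$, so for $t<0$ the index $t\qq(l)$ is negative and, $f$ being holomorphic, all but finitely many $a(\overline{l}+\eta,t\qq(l))$ vanish, leaving an entire finite sum. I also note that the underlying discriminant form is finite, so any bound $|a(\lambda,n)|\le C\,n^{\alpha}$ may be taken uniform in the component $\lambda$, with $C$ the maximum over the finitely many classes.

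First I would record the lattice input. The sublattice $\Lambda\coloneq L_1'\cap L'$ has rank $m_1$ and carries the positive definite form $\qq|_{\Lambda}$, so $\#\{0\neq l\in\Lambda:\qq(l)\le X\}=O(X^{m_1/2})$, equivalently the Epstein series $\sum_{0\neq l\in\Lambda}\qq(l)^{-w}$ converges for $\Re(w)>m_1/2$. Writing $\rho(N)=\#\{l\in\Lambda:\qq(l)=N\}$ and grouping by $N=\qq(l)$, a bound $|a(\overline{l}+\eta,t\qq(l))|\le C\,\qq(l)^{\alpha}$ gives
\[
\sum_{0\neq l\in\Lambda}\frac{|a(\overline{l}+\eta,t\qq(l))|}{\qq(l)^{\Re(s)}}\le C\,t^{\alpha}\sum_{N}\rho(N)\,N^{\alpha-\Re(s)},
\]
whose right-hand side is an Epstein series in $w=\Re(s)-\alpha$ and hence converges for $\Re(s)>\alpha+\tfrac{m_1}{2}$. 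On each half-plane $\Re(s)\ge\alpha+\tfrac{m_1}{2}+\delta$ this furnishes a convergent majorant independent of $s$, so the original series converges normally on $\Re(s)>\alpha+\tfrac{m_1}{2}$ and is holomorphic there.

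It remains to supply $\alpha$. For $f\in\MF_{L,k}$ I would split off the Eisenstein part, whose coefficients grow like $n^{k-1}$ (up to $n^{\epsilon}$, absorbed by strictness), the cuspidal part being of lower order for $k\ge 2$; this is the Hecke bound $\alpha=k-1$ and the abscissa $k-1+\tfrac{m_1}{2}$. For $f\in\CF_{L,k}$ the gain comes from a sharper estimate. In the even case (integral weight $k$) the Ramanujan--Petersson bound of Deligne, applied after decomposing $f$ into Hecke eigenforms, yields $\alpha=\tfrac{k-1}{2}+\epsilon$ and the abscissa $\tfrac{k+m_1}{2}-\tfrac12$. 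In the odd case (half-integral weight $k$) no such bound is available, and I would instead invoke $|a(\lambda,n)|\ll_{\epsilon}n^{k/2-1/4+\epsilon}$, which one extracts from Waldspurger's formula together with the convexity bound for the twisted $L$-function of the Shimura lift; this gives $\alpha=\tfrac{k}{2}-\tfrac14+\epsilon$ and the abscissa $\tfrac{k+m_1}{2}-\tfrac14$.

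The main obstacle is precisely this last coefficient bound. In the integral-weight case Deligne's theorem is an off-the-shelf input, but in the half-integral case the analogue of Ramanujan fails, so the saving of only $\tfrac14$ is genuine and must be obtained through the Shimura/Waldspurger route (or cited); I would expect this step, together with verifying that the cited bounds transfer to the vector-valued Weil-representation setting via a reduction to scalar forms of suitable level, to require the most care. The lattice count and the reduction to an Epstein series are routine.
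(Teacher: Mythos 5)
Your argument is essentially the paper's: both reduce the claim to a pointwise Fourier-coefficient bound $|a(\lambda,n)|\ll_\epsilon n^{k/2-\boundoptimiser+\epsilon}$ (respectively $n^{k-1+\epsilon}$ in the non-cuspidal case) followed by a lattice-point count comparing $\sum_{0\neq l}\qq(l)^{\alpha-\Re(s)}$ to a convergent zeta series in rank $m_1$; the paper simply packages the coefficient bound as its Corollary~\ref{cor:VVMF_bound_FC_for_Lseries} rather than re-deriving it from Deligne/Waldspurger as you do. Your identification of the half-integral-weight coefficient bound as the genuinely delicate input is accurate, and the rest matches the paper's proof step for step.
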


\begin{proof}
	By assumption $\sqrt{\lvert \qq \rvert} : L_1 \otimes_\Z \R \to \R_{\geq 0}$ defines a norm, 
	rendering it equivalent to~$\| \argdot \|_\infty$. 
	This, together with the bound on Fourier coefficients from Corollary \ref{cor:VVMF_bound_FC_for_Lseries} 
	reduces to bounding
	\[
		\sum_{0 \neq l \in L_1'} \|l\|_{\infty}^{k - 2(\boundoptimiser + \Re(s))} 
	\]
	which is, by a counting argument and comparison to the Riemann $\zeta$-function, identified to converge for 
	$\Re(s) > \frac{k + m_1}{2} - \boundoptimiser$. 
	Normal convergence follows immediately.
\end{proof}
	
	For the purpose of deriving product expansions, we are interested in the case of one dimensional sublattices 
	yielding the following definition taylored to our setting. 
	
\begin{de}\label{de:symmsqLfunVVMF}
						Let $f \in \MF_{L,k}$ be a holomorphic modular form. 
	For $\lambda \in \Df$ and $t \in \Q^{\times}$ we define the associated \index{Lfunction@$L$-function!vector-valued!symmetric square} \emph{symmetric square} $L$-function and its \emph{specialisation} at some $N \in \N$ as 
	\begin{align}
		L_{(\lambda,t)}(f,s) \coloneq \sum_{n \in \N} \frac{a(n\lambda,n^2t)}{n^s}, \qquad L_{(\lambda,t)}^{N}(f,s) \coloneq \sum_{\substack{n \in \N \\ \gcd(n,N) = 1}} \frac{a(n\lambda,n^2t)}{n^s}.
	\end{align}
	Further, for $\posvec \in L'$ with $\qq(\posvec) > 0$, we write \(L_{\posvec}^{N} \coloneq L_{(\lambda,t)}\) for $(\lambda,t) = (\overline{\posvec}, \qq(\posvec))$. 
											\end{de}
	
A recombination of the proof of Lemma~\ref{lem:LL1etarfsconverges} and bounds established in Corollary~\ref{cor:VVMF_bound_FC_for_Lseries} yields the following.
	
	\begin{cor}\label{cor:LellNfsabsoluteconvergence}
		Let $f \in \CF_{L,k}$ with Fourier expansion 
		\[
			f(\tau) = \sum_{\lambda \in \Df} \sum_{n \in \qd(\lambda) + \Z} a(\lambda,n) \cdot e(n \tau) \efr_{\lambda}.
		\]
		Then for $N \in \N$ the series 
		\[
			L_{(\lambda,t)}^{N}(f,s) = \sum_{\substack{n \in \N \\ \gcd(n,N) = 1}} \frac{a ( \lambda,n^2 t)}{n^s}
		\]
		converges absolutely for $\Re(s) > k + 1 - 2\boundoptimiser$ for some $1 / 4 \leq \boundoptimiser \leq 1/2$ as follows.  
		\begin{center}
			\begin{tabular}{c|cc}
				$\boundoptimiser$ 	& $N \in \N$ & $ \level(L) \mid N^{\infty}$ \\ \midrule
				$2 \mid \rk(L)$		& $ 1/2 $	& $1/2$ 					\\
				$2 \nmid \rk(L)$	& $ 1/4 $	& $5/16$
			\end{tabular}
			\vspace{11pt}
		\end{center}
	\end{cor}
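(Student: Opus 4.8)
The plan is to mirror the proof of Lemma~\ref{lem:LL1etarfsconverges} in the rank-one situation, replacing the lattice-point count by the elementary comparison $\sum_n n^{-p}$ with the Riemann $\zeta$-function. Concretely, I would fix the component $\lambda \in \Df$ and the scalar $t \in \Q^\times$ and estimate the individual summand $a(\lambda, n^2 t)/n^s$ directly, so that absolute convergence reduces to a single Dirichlet-series comparison. Since $\lambda$ and $t$ are fixed, the implied constants may freely depend on them, and no uniformity in $\lambda$ is required.

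First I would invoke the Fourier-coefficient bound of Corollary~\ref{cor:VVMF_bound_FC_for_Lseries}, which for $f \in \CF_{L,k}$ gives an estimate of the shape $\lvert a(\lambda, m) \rvert \ll \lvert m \rvert^{k/2 - \boundoptimiser}$ (up to an $\lvert m \rvert^{\varepsilon}$ that, being arbitrary, will not affect the open abscissa). The content of that corollary is that the exponent $\boundoptimiser$ depends only on the parity of $\rk(L)$ --- integral weight in the even case, half-integral weight in the odd case --- and on whether the coprimality constraint $\gcd(n,N)=1$ forces the index to be coprime to $\level(L)$; this is precisely what produces the four entries of the table.

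Next I would substitute the specialised index $m = n^2 t$. This yields
\[
	\left\lvert \frac{a(\lambda, n^2 t)}{n^s} \right\rvert \ll_t n^{2(k/2 - \boundoptimiser)} \, n^{-\Re(s)} = n^{k - 2\boundoptimiser - \Re(s)},
\]
so the sum over $n$ coprime to $N$ is dominated by $\sum_{n \geq 1} n^{k - 2\boundoptimiser - \Re(s)}$, which converges exactly when $k - 2\boundoptimiser - \Re(s) < -1$, that is, for $\Re(s) > k + 1 - 2\boundoptimiser$. This is the asserted half-plane, and it is the rank-one shadow of the lemma's abscissa $\tfrac{k + m_1}{2} - \boundoptimiser$ under the reparametrisation $s \mapsto s/2$ caused by indexing the denominator by $n$ rather than by $\qq(l) \sim n^2$.

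The routine heart of the argument is therefore the $\zeta$-comparison; the only genuine work is bookkeeping the value of $\boundoptimiser$ across the four cases. The main subtlety --- the one place where more than substitution is needed --- is the level-divisibility column: one must check that the constraint $\level(L) \mid N^{\infty}$ together with $\gcd(n,N)=1$ genuinely restricts the indices $n^2 t$ to the range where the sharper estimate of Corollary~\ref{cor:VVMF_bound_FC_for_Lseries} applies, thereby improving $\boundoptimiser$ from $1/4$ to $5/16$ in the odd-rank case (the even-rank value $1/2$ being already of Ramanujan strength and hence unchanged).
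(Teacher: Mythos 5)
Your proposal is correct and is essentially the paper's own argument: the paper gives no written proof for this corollary beyond the remark that it follows by ``a recombination of the proof of Lemma~\ref{lem:LL1etarfsconverges} and bounds established in Corollary~\ref{cor:VVMF_bound_FC_for_Lseries}'', which is precisely your term-by-term substitution $m = n^2t$ into the coefficient bound followed by the $\zeta$-comparison, with the four table entries inherited directly from that corollary. Your observation that the abscissa $k+1-2\boundoptimiser$ is the $m_1=1$ case of the lemma under $s \mapsto 2s$ is consistent with the paper's setup.
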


	\subsection{Rankin Selberg realisation}\label{ssec:Lseries_RankinSelberg}
	
	In 1939 Rankin \cite{Rankin1939II} considered integrals of automorphic functions multiplied
    by a non-holomorphic Eisenstein series of weight zero over the fundamental
    domain of $\SL_{2}(\Z)$ given by
	\[
      \Fd_{\Gamma(1)} = \{ \tau \in \Ha \mid \lvert \Re(\tau) \rvert \leq 1/2, \lvert \tau \rvert \geq 1\}.
	\]
	These would then, by invariance of the integrand, unfold to an integral over
    the vertical strip
	\[
      \Fd_{\Gamma_\infty} = \{ \tau \in \Ha \mid \lvert \Re(\tau) \rvert \leq 1/2\}
	\]
	which represents a fundamental domain of $\Gamma_{\infty} \leq \SL_{2}(\Z)$ being the
    stabiliser of~$\infty$. The resulting integral would represent an $L$-series with
    analytic continuation and functional equation \cite[Thm.~3 p.~360]{Rankin1939II}. We remark
    that Rankin already considered the case of Hecke congruence subgroups and
    that the method was independently developed by Selberg.
	
	In the following, we will mimic that procedure in order to construct
    $L$-series as in~\eqref{eq:symmsqLfunforsublattice} for cusp forms $f \in \CF_{L,k}$. To this end, we
    consider a pairing of $f$ with a non-holomorphic Eisenstein series which is
    twisted with a theta series.

	\begin{lemma}\label{lem:UnfoldingagainstEisenstein}
      Let $L$ be an even lattice that splits as $L = L_1 \oplus L_2$,
      \begin{itemize}
        \item $f \in \CF_{L,k}$ with Fourier coefficients
              $a(\lambda,n) \in \LandO_{\varepsilon}(n^{\nu_{f}})$ for some $\nu_{f} \in \R$,
			
        \item $g \in \MF_{L_1,k_1}$ with Fourier coefficients
              $b(\lambda,n) \in \LandO_{\varepsilon}(n^{\nu_{g}})$ for some $\nu_{g} \in \R$,
			
        \item and $E_{L,\eta,k_2}(\tau,s)$ with isotropic $\eta \in L_2'/L_2$ be the Eisenstein
              series from Definition~\ref{de:VVEisensteinseriesnonholomorphic} such that $k = k_1 + k_2$.
      \end{itemize}
      Then
      \begin{equation}\label{eq:UnfoldingagainstEisenstein}
        \int_{\Fd_{\Gamma(1)}} \left\langle f , g \otimes E_{L,\eta,k_2}(\argdot,s) \right\rangle \cdot \Im^k \di \mu	
        =	\int_{\Fd_{\Gamma_\infty}} \big\langle f , g \otimes \efr_{\eta} \big\rangle \cdot \Im^{\overline{s}+k}   \di \mu 
      \end{equation}
      provided $\Re(s) > \max\{3 - k + \nu_{f} + \nu_{g}, 1 - k/2\}$. In this range,
      both expressions are holomorphic in $\overline{s}$.
	\end{lemma}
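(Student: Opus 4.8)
The plan is to run Rankin's unfolding method in the vector-valued, Weil-representation setting. Throughout set $y = \Im(\tau)$, so $\di\mu = y^{-2}\,\di x\,\di y$. By Definition~\ref{de:VVEisensteinseriesnonholomorphic} the Eisenstein series is an average over the cusp stabiliser,
\[
  E_{L,\eta,k_2}(\tau,s) = \sum_{\gamma \in \Gamma_\infty \backslash \Gamma(1)} \big[\, y^{s}\, \efr_{\eta} \,\big] \big|_{k_2}\, \gamma ,
\]
whose seed term, indexed by the class of $\Gamma_\infty$, is $y^{s}\efr_{\eta}$; here $\big|_{k_2}$ is the weight-$k_2$ Petersson slash twisted by $\rho_{L_2}$. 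The isotropy of $\eta$ is exactly what makes $y^{s}\efr_{\eta}$ invariant under $\Gamma_\infty$ (so this sum is well posed), since then $\rho_{L_2}(T)\efr_\eta = \efr_\eta$. The first step is to record that the full integrand $\Phi_s \coloneq \langle f, g \otimes E_{L,\eta,k_2}(\argdot,s)\rangle\,y^{k}$ is $\Gamma(1)$-invariant: writing $\rho_L = \rho_{L_1}\otimes\rho_{L_2}$ and using that the Hermitian pairing on $\mathbb{C}[L'/L]$ is $\rho_L$-unitary, the Weil-multipliers cancel, while the holomorphic automorphy factors $(c\tau+d)^{k}$ of $f$ cancel against the conjugated factors of $g\otimes E$ together with $\Im(\gamma\tau)^{k} = y^{k}|c\tau+d|^{-2k}$ (the $\pm I$ and parity contributions being absorbed into the normalisation of $E$). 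Hence the left-hand side is the integral of a $\Gamma(1)$-invariant function over a fundamental domain.

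Next I would substitute the coset expansion into $\Phi_s$ and carry out the unfolding. Putting
\[
  F_s \coloneq \big\langle f, g \otimes \efr_{\eta} \big\rangle \, \Im^{\overline{s}+k},
\]
a direct computation using modularity of $f,g$ and unitarity of the pairing (cancelling $(c\tau+d)^{k}$ against $\overline{(c\tau+d)^{k_1}}\,|c\tau+d|^{-2k}$, with the seed contributing $\overline{y^{s}}=y^{\overline{s}}$ because the pairing is conjugate-linear in its second slot and $y$ is real — this is the origin of the $\overline{s}$ in the statement) yields the unfolding identity $\Phi_s(\tau) = \sum_{\gamma \in \Gamma_\infty\backslash\Gamma(1)} F_s(\gamma\tau)$. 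Since $\di\mu$ is $\Gamma(1)$-invariant and the translates $\gamma\,\Fd_{\Gamma(1)}$ tile $\Fd_{\Gamma_\infty}$ up to a null set, termwise integration collapses the sum,
\[
  \int_{\Fd_{\Gamma(1)}} \Phi_s \, \di\mu = \sum_{\gamma} \int_{\gamma\,\Fd_{\Gamma(1)}} F_s \, \di\mu = \int_{\Fd_{\Gamma_\infty}} F_s \, \di\mu ,
\]
which is precisely the right-hand side of~\eqref{eq:UnfoldingagainstEisenstein}. This step is purely formal once the interchange of summation and integration is licensed.

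The main obstacle is exactly that justification, i.e.\ proving absolute convergence of the double object (sum over $\gamma$, integral over $\Fd_{\Gamma(1)}$) in the asserted range, and this is where the Fourier bounds $\nu_f,\nu_g$ and the two thresholds enter. The condition $\Re(s) > 1 - k/2$ is the abscissa of normal convergence of $E_{L,\eta,k_2}(\argdot,s)$ recorded with its definition; it legitimises the coset expansion and, since on $\Fd_{\Gamma(1)}$ the height $y$ is bounded below while the cuspidal $f$ decays exponentially at the single cusp, it already makes $\Phi_s$ integrable there. For the unfolded side I would invoke Tonelli: by the tiling identity, $\sum_\gamma \int_{\Fd_{\Gamma(1)}} |F_s(\gamma\tau)|\,\di\mu = \int_{\Fd_{\Gamma_\infty}} |F_s|\,\di\mu$, so it suffices to bound the latter. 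Feeding $a,b \in \LandO_\varepsilon(n^{\nu_f}),\,\LandO_\varepsilon(n^{\nu_g})$ into the Mellin/Abel estimate $\sum_{n} n^{\nu}e^{-2\pi n y} \asymp y^{-(\nu+1)}$ as $y\to 0$ gives the pointwise bounds $\|f(\tau)\| \lesssim y^{-(\nu_f+1)}$ and $\|g(\tau)\| \lesssim y^{-(\nu_g+1)}$, whence by Cauchy--Schwarz $|F_s| \lesssim y^{\,\Re(s)+k-(\nu_f+\nu_g+2)}$ near the bottom of the strip, while cuspidality of $f$ forces exponential decay as $y\to\infty$. The remaining height integral $\int_0^{\infty} y^{\,\Re(s)+k-4-\nu_f-\nu_g}\,\di y$ (the extra $y^{-2}$ from $\di\mu$) then converges at $y=0$ precisely for $\Re(s) > 3 - k + \nu_f + \nu_g$. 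For $\Re(s)$ beyond the maximum of the two thresholds, Tonelli delivers absolute convergence, Fubini licenses the interchange, and the displayed equality holds. Finally, holomorphy of both sides in $\overline{s}$ on this half-plane follows from the local uniform (normal) convergence just established together with the termwise holomorphy of the integrand in $\overline{s}$, via Morera's theorem.
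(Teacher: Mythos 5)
Your proposal is correct and follows essentially the same route as the paper: reduce the identity to the standard unfolding, which is justified once (locally uniform) absolute convergence of the unfolded integral is established, with the threshold $\Re(s) > 3 - k + \nu_f + \nu_g$ coming from the Fourier-coefficient bounds and $\Re(s) > 1 - k/2$ from the Eisenstein series itself. The only cosmetic difference is in the convergence estimate — you bound $f$ and $g$ pointwise by powers of $\Im(\tau)$ and integrate in the height, whereas the paper integrates the double Fourier sum termwise against $v^{\Re(s)+k-2}$ and compares the resulting lattice sum with the Riemann $\zeta$-function — but both yield the same abscissa.
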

	
	\begin{proof}
      In case the right hand side of~\eqref{eq:UnfoldingagainstEisenstein} converges absolutely, the identity
      follows by definition. Hence, only (locally uniform) absolute convergence
      has to be verified. We write~$a(\lambda,n)$ and~$b(\lambda,n)$ for the Fourier
      coefficients of~$f$ and~$g$ as in~\eqref{eq:VVMF_FourierExp} and insert their Fourier series.
      Then
      \begin{align}
        \big\langle f , g \otimes \efr_{\eta} \big\rangle = 
        &\sum_{\lambda_1 \in L_1'/L_1} \sum_{n \in \Z + \qd(\lambda_1)} 
          \sum_{n_1 \in \Z + \qd(\lambda_1)} a(\lambda_1  + \isoel,n) \overline{b(\lambda_1,n_1)} 
          \cdot e(n\tau - n_1 \overline{\tau}).
      \end{align}
      Subsequently, it suffices to consider the summand of an arbitrary, but
      fixed, $\lambda_{1} \in L_1'/L_1$ which we will call $S(\lambda_{1})$. We note that
      \(\lvert e(n\tau - n_1 \overline{\tau}) \rvert = \exp[- 2 \pi v(n + n_1) ]\) and
      conclude \begin{align*} \lvert S(\lambda_{1}) \rvert
        \leq\; & \sum_{\substack{0 < n \in \Z + \qd(\lambda_1) + \qd(\eta) \\0 \leq n_1 \in \Z + \qd(\lambda_1)}} n^{\nu_{f}} n_1^{\nu_{g}} \cdot \exp[- 2 \pi v(n + n_1) ].  \\
        \intertext{We write $N \coloneq \level(L)$ and note that a summation
        over more possible choices of $n,n_1$ potentially increases the
        sum:} \lvert S(\lambda_{1}) \rvert \leq\; & \sum_{\substack{0 \neq n \in \Z \\ n_1 \in \Z}} \frac{\lvert n\rvert^{\nu_{f}} \lvert n_1\rvert^{\nu_{g}}}{N^{(\nu_{f}+\nu_{g})}} \cdot \exp[- 2 \pi v(\lvert n\rvert + \lvert n_1\rvert)/N].
      \end{align*}
      Further, bounding
      \( \lvert n\rvert^{\nu_{f}} \lvert n_1\rvert^{\nu_{g}} \leq \max\{\lvert n\rvert, \lvert n_1\rvert\}^{(\nu_{f}+\nu_{g})} \)
      amounts to
      \begin{align*}
        \int_{\Fd_{\Gamma_\infty}} \lvert S(\lambda_1) \Im^{\overline{s}+k} \rvert \di \mu
        &\leq  \int_{0}^{\infty} \sum_{\substack{0 \neq n \in \Z \\ n_1 \in \Z}} \frac{\max\{\lvert n\rvert, \lvert n_1\rvert\}^{(\nu_{f}+\nu_{g})}}{N^{\nu_{f}+\nu_{g}}} \cdot e^{- \frac{2 \pi}{N} (\lvert n\rvert + \lvert n_1\rvert) \cdot v} v^{\Re(s) + k - 2}  \di v. 
      \end{align*}
      The substitution $v \mapsto v \cdot N/[2 \pi (\lvert n\rvert + \lvert n_1\rvert)]$
      yields that the last expression equals
      \[
        \frac{1}{N^{\nu_{f}+\nu_{g}}} \frac{\Gamma(\Re(s) + k -1)}{(2 \pi/N)^{\Re(s) + k -1}} \cdot \sum_{\substack{0 \neq n \in \Z \\ n_1 \in \Z}} \frac{\max\{\lvert n\rvert, \lvert n_1\rvert\}^{\nu_{f}+\nu_{g}}}{(\lvert n\rvert + \lvert n_1\rvert)^{\Re(s) + k -1}}.
      \]
      By the equivalence of norms, convergence follows by comparison with the
      Riemann $\zeta$-function, yielding locally uniform absolute convergence for
      $\Re(s) > 3 - k + \nu_{f} + \nu_{g}$.
	\end{proof}
	
	Note that the lemma above is also true, if the Eisenstein series is to be
    replaced by a parabolic Poincaré series where the part of the integral for
    $v \to \infty$ may be bounded independently of the polynomial growth of its kernel
    function by a standard argument.
	
	In order to construct the $L$-series presented in \eqref{eq:symmsqLfunforsublattice}, we will have to
    choose $g$ to equal a theta series. Assume that
    $(L,\qq) = (L_1, \qq_1) \oplus (L_2,\qq_2)$ splits with $(L_1,\qq_1)$ positive
    definite of rank~$\dimV_1$ and associated theta function
	\[
      \Theta_{L_1}(\tau) = \sum_{\lambda_1 \in L_1'/L_1} \sum_{l \in L_1 + \lambda_1} e(\qq(l)\tau)\efr_{\lambda_1}.
	\]
	By Lemma~\ref{lem:UnfoldingagainstEisenstein} this yields for some cusp form $f \in \CF_{L,k}$ that
	\begin{align*}
      \int_{\Fd_{\Gamma(1)}} \left\langle f , \Theta_{L_1} \otimes E_{L_2,\isoel,k_2}(\argdot,s) \right\rangle \cdot \Im^k \di \mu  
      = \int_{\Fd_{\Gamma_\infty}} \big\langle f ,  \Theta_{L_1} \otimes \mathfrak{e}_{\isoel} \big\rangle \cdot \Im^{\overline{s}+k}   \di \mu.
	\end{align*}
	Denoting by $a(\lambda,n)$ the Fourier coefficients of $f$ as in~\eqref{eq:VVMF_FourierExp}, we obtain
    through an argument analogous to the proof of Lemma~\ref{lem:UnfoldingagainstEisenstein} that
	\begin{align*}
      &\big\langle f ,  \Theta_{L_1} \otimes \mathfrak{e}_{\isoel} \big\rangle \\
      = 	&\sum_{\lambda_1 \in L_1'/L_1} \sum_{0 < n \in \Z + \qd(\lambda_1)} \sum_{l \in L_1 + \lambda_1} a(\lambda_1  + \isoel,n) \cdot e(n\tau) e\left[ - \overline{\tau} \qq(l) \right]  \\
      = 	&\sum_{\lambda_1 \in L_1'/L_1} \sum_{0 < n \in \Z + \qd(\lambda_1)} \sum_{l \in L_1 + \lambda_1} a(\lambda_1 + \isoel ,n) \cdot e[iv (n + \qq(l))] e[ u (n - \qq(l)) ].
	\end{align*}
	Clearly, $n - \qq(l) \in \Z$, so that the mapping
    $\R \ni u \mapsto e\left[u (n - \qq(l)) \right]$ defines a character which descends
    to a character on the unit circle $\mathbb{T}$. This implies
	\[
      \int_{0}^{1} e\left[u (n - \qq(l)) \right] \di u = \delta_{n,\qq(l)},
	\]
	where the right hand side is to be understood as a Kronecker symbol.

	From these considerations, we deduce that
	\begin{align*}
      &\int_{\Fd_{\Gamma_\infty}} \big\langle f ,  \Theta_{L_1} \otimes \mathfrak{e}_{\isoel} \big\rangle \cdot \Im^{\overline{s}+k}   \di \mu \\
      =\; &\int_{0}^{\infty} \sum_{\lambda_1 \in L_1'/L_1} \sum_{0 \neq l \in L_1 + \lambda_1} a(\lambda_1 + \isoel,\qq(l)) \cdot e[iv 2\qq(l)] \cdot v^{\overline{s}+k-2}   \di v \\
      =\;	& \frac{\Gamma(\overline{s}+k-1)}{(4 \pi)^{\overline{s}+k-1}} \sum_{0 \neq l \in L_1'} \frac{a(\overline{l} + \isoel,\qq(l))}{\qq(l)^{\overline{s}+k-1}}. 
    \end{align*}
    Comparing the last result with Definition~\ref{de:symmsqLfunforsublattice}, we notice that the series
    appearing is exactly $L_{L_1,\eta,1}(f,\overline{s} + k -1)$. Comparison with
    Lemma~\ref{lem:LL1etarfsconverges} yields that the last expression converges absolutely for
    $\Re(s) > \frac{m_1 - k}{2} + 1 - \boundoptimiser$ with $\boundoptimiser$ as
    in the lemma and defines a holomorphic function in $\overline{s}$ in that
    right half plane. As a consequence, we have proven the following
    Proposition.

	\begin{prop}\label{prop:RankinSelbergVVMFThetaposdefsplitcrossEisenstein}
      Let $f \in \CF_{L,k}$, $k \geq 2$, $L = L_1 \oplus L_2$ with positive definite $L_1$
      of rank $\dimV_1$, $E_{L_2,\isoel,k_2}(\argdot,s)$ be the Eisenstein
      series of Definition~\ref{de:VVEisensteinseriesnonholomorphic}, and assume $k = \dimV_1/2 + k_2$. Then we have
      the identity
      \begin{equation}\label{eq:RankinSelbergVVMFThetaposdefsplitcrossEisenstein}
        \int_{\Fd_{\Gamma(1)}} \left\langle f , \Theta_{L_1} \otimes E_{L_2,\isoel,k_2}(\argdot,s) \right\rangle \cdot \Im^k \di \mu	
        =	\frac{\Gamma(\overline{s}+k-1)}{(4 \pi)^{\overline{s}+k-1}} \cdot L_{L_1,\isoel,1}(f,\overline{s} + k - 1)
      \end{equation}
      of holomorphic functions in $\overline{s}$ for
      $\Re(s) > \frac{ m_1 - k }{2} + 1 - \boundoptimiser$, where
      $\boundoptimiser = 1/2$ or $1/4$ depending on whether $\rk(L)$ is even or
      not.
	\end{prop}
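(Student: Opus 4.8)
The plan is to run the classical Rankin--Selberg unfolding with the theta series $\Theta_{L_1}$ playing the role of the auxiliary modular form, exactly along the lines of the computation preceding the statement, and then to extract the convergence region from Lemma~\ref{lem:LL1etarfsconverges} rather than from the coarser bound built into the unfolding lemma. First I would record the two Fourier-coefficient bounds needed to apply Lemma~\ref{lem:UnfoldingagainstEisenstein}: for the cusp form $f$, Corollary~\ref{cor:VVMF_bound_FC_for_Lseries} supplies $a(\lambda,n) \in \LandO_{\varepsilon}(n^{\nu_{f}})$ with $\nu_{f} = k/2 - \boundoptimiser$, while the coefficients of $\Theta_{L_1}$ count vectors of a given norm in the positive definite rank-$\dimV_1$ lattice $L_1$ and hence satisfy $b(\lambda_1,n) \in \LandO_{\varepsilon}(n^{\nu_{g}})$ with $\nu_{g} = m_1/2 - 1$. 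Since $\Theta_{L_1}$ has weight $k_1 = m_1/2$, the hypothesis $k = m_1/2 + k_2$ is precisely $k = k_1 + k_2$, so the lemma applies and yields the unfolded identity on $\Fd_{\Gamma_\infty}$ for $\Re(s) > 3 - k + \nu_{f} + \nu_{g} = \frac{m_1 - k}{2} + 2 - \boundoptimiser$.

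Next I would evaluate the unfolded integral. Inserting the Fourier expansions of $f$ and $\Theta_{L_1}$ into $\langle f , \Theta_{L_1} \otimes \efr_{\isoel}\rangle$ produces a triple sum over $\lambda_1 \in L_1'/L_1$, over $n$, and over $l \in L_1 + \lambda_1$, whose $\tau$-dependence factors as $e(u(n-\qq(l))) \cdot \exp(-2\pi v (n + \qq(l)))$. Because $n - \qq(l) \in \Z$, the $u$-integration over $[0,1]$ collapses the sum onto the diagonal $n = \qq(l)$ by orthogonality of characters, and the surviving $v$-integration is a Gamma integral, $\int_0^\infty \exp(-4\pi\qq(l)v)\, v^{\overline{s}+k-2}\,\di v = \Gamma(\overline{s}+k-1)/(4\pi\qq(l))^{\overline{s}+k-1}$. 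Summing over $0 \neq l \in L_1'$ and comparing with Definition~\ref{de:symmsqLfunforsublattice} at $t = 1$ identifies the result as $\frac{\Gamma(\overline{s}+k-1)}{(4\pi)^{\overline{s}+k-1}} L_{L_1,\isoel,1}(f,\overline{s}+k-1)$, which is the claimed right-hand side.

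The step I expect to be the main obstacle is reconciling the two convergence regions. The unfolding in the first step is only justified where the integrand over $\Fd_{\Gamma_\infty}$ converges absolutely, and since that estimate still sees the full double sum over $n$ and $l$, it delivers only $\Re(s) > \frac{m_1-k}{2} + 2 - \boundoptimiser$. After the $u$-integration has restricted matters to the diagonal, however, the remaining single sum over $l$ converges in the strictly larger half-plane $\Re(s) > \frac{m_1-k}{2} + 1 - \boundoptimiser$, by Lemma~\ref{lem:LL1etarfsconverges} evaluated at the argument $\overline{s}+k-1$. To upgrade the identity from the smaller to the larger region I would invoke analytic continuation: the right-hand side is holomorphic in $\overline{s}$ throughout the larger half-plane by Lemma~\ref{lem:LL1etarfsconverges}, and the left-hand side is holomorphic there as well, since $f \in \CF_{L,k}$ decays rapidly at the cusp and hence integrates convergently against the polynomially bounded Eisenstein kernel over $\Fd_{\Gamma(1)}$ in the pertinent range. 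Agreement on the nonempty overlap then forces agreement throughout the larger region by the identity theorem, which is the delicate point to make watertight.
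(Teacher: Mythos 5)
Your proposal follows the paper's argument essentially verbatim: unfold via Lemma~\ref{lem:UnfoldingagainstEisenstein} with $g = \Theta_{L_1}$ (using $\nu_f = k/2 - \sigma$ and $\nu_g = m_1/2 - 1$), collapse the double sum to the diagonal $n = \qq(l)$ by character orthogonality in $u$, evaluate the surviving Gamma integral, and read off the convergence region from Lemma~\ref{lem:LL1etarfsconverges} applied at the argument $\overline{s}+k-1$. The only place you go beyond the paper is the explicit analytic-continuation step bridging the half-plane $\Re(s) > \frac{m_1-k}{2}+2-\sigma$, where the unfolding lemma applies, and the larger half-plane $\Re(s) > \frac{m_1-k}{2}+1-\sigma$ of the statement; the paper passes over this point silently, so your treatment is a welcome tightening rather than a different route.
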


	We note that the left hand side of~\eqref{eq:RankinSelbergVVMFThetaposdefsplitcrossEisenstein} gives rise to a meromorphic
    continuation of the $L$-series~$L_{L_{1},\eta,1}$ which is inherited from the
    Eisenstein series. Also compare Remark~\ref{rem:Eis_locunifbound}. Using the notation of \eqref{eq:de:clambdamu0s} as
    well as Proposition~\ref{prop:VVEis_funequ} we deduce the following symmetry.

	\begin{thm}\label{prop:mercont_funequ}
      With the notation as above $L_{L_{1},\eta,1}(f,s)$ has meromorphic
      continuation to the entire complex plane and we find that
      \begin{equation}\label{eq:}
        L_{L_{1},\eta,1}(f,s) = \frac{1}{2} \sum_{\mu \in \Iso(L_{2}'/L_{2})} c_{\eta}(\mu,0,s) L_{L_{1},\mu,1}(f,1-k-s).
      \end{equation}
	\end{thm}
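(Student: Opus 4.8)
The plan is to transport the analytic behaviour of the non-holomorphic Eisenstein series to the $L$-series $L_{L_1,\eta,1}(f,\argdot)$ through the Rankin--Selberg identity~\eqref{eq:RankinSelbergVVMFThetaposdefsplitcrossEisenstein}. Writing $I_\eta(s)$ for the left hand side of that identity, Proposition~\ref{prop:RankinSelbergVVMFThetaposdefsplitcrossEisenstein} reads
\begin{equation*}
  I_\eta(s) = \frac{\Gamma(\overline{s}+k-1)}{(4\pi)^{\overline{s}+k-1}} \cdot L_{L_1,\eta,1}(f,\overline{s}+k-1)
\end{equation*}
on the half plane $\Re(s) > \tfrac{m_1-k}{2}+1-\boundoptimiser$. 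Since $L_{L_1,\eta,1}(f,\argdot)$ is, by Lemma~\ref{lem:LL1etarfsconverges}, a genuine holomorphic function of the single variable $w \coloneq \overline{s}+k-1$, it suffices to work in the right half plane in $w$ and to rename $w$ to $s$ at the very end.

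For the meromorphic continuation I would argue that $I_\eta(s)$ extends meromorphically to all of $\mathbb{C}$: the Eisenstein series $E_{L_2,\eta,k_2}(\argdot,s)$ possesses such a continuation, and the locally uniform bounds recorded in Remark~\ref{rem:Eis_locunifbound} allow the continuation to pass through the integral over $\Fd_{\Gamma(1)}$. Dividing by the everywhere meromorphic and generically nonzero factor $\Gamma(\overline{s}+k-1)/(4\pi)^{\overline{s}+k-1}$ then yields a meromorphic continuation of $w \mapsto L_{L_1,\eta,1}(f,w)$ to the whole plane, inherited from the poles of the Eisenstein series.

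The functional equation is obtained by feeding the functional equation of the Eisenstein series from Proposition~\ref{prop:VVEis_funequ} into $I_\eta(s)$. That proposition expresses $E_{L_2,\eta,k_2}(\argdot,s)$ as a scattering sum over $\mu \in \Iso(L_2'/L_2)$ with coefficients assembled from $c_\eta(\mu,0,s)$ as in~\eqref{eq:de:clambdamu0s}, evaluated at the reflected spectral parameter $\widehat{s}$. Because the pairing $\langle f,\Theta_{L_1}\otimes\argdot\rangle$ and the integral over $\Fd_{\Gamma(1)}$ are linear, I would substitute this expansion and apply~\eqref{eq:RankinSelbergVVMFThetaposdefsplitcrossEisenstein} once more to each summand; this produces a second expression for $I_\eta(s)$ as a linear combination of the functions $L_{L_1,\mu,1}(f,\overline{\widehat{s}}+k-1)$. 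Equating it with the direct Rankin--Selberg expression, cancelling the $\Gamma$-factors and powers of $4\pi$ against those absorbed into the coefficients, and performing the change of variables $w=\overline{s}+k-1$ --- calibrated so that the reflection $\widehat{s}$ corresponds exactly to $w \mapsto 1-k-w$ --- yields the claimed identity after renaming $w$ to $s$. The factor $\tfrac12$ and the precise coefficients $c_\eta(\mu,0,s)$ are inherited from Proposition~\ref{prop:VVEis_funequ}.

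The main obstacle I anticipate is bookkeeping rather than any deep analytic input. One has to track the weight shift by $k-1$, the complex conjugation $\overline{s}$, and the two $\Gamma$-factors simultaneously, verifying that they combine with the scattering coefficients to leave precisely $c_\eta(\mu,0,s)$ with no residual gamma or power-of-$4\pi$ terms; it is this cancellation that pins down the correct reflection point and fixes the argument $1-k-s$. A second, more routine, point is that the functional-equation identity is initially valid only on the overlap of the half planes of convergence, so one must invoke the identity theorem together with the meromorphic continuation established above to propagate it to all of $\mathbb{C}$.
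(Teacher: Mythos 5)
Your proposal is correct and follows essentially the same route as the paper, which itself only sketches the argument in the paragraph preceding the theorem: the meromorphic continuation is inherited from the Eisenstein series through the Rankin--Selberg identity of Proposition~\ref{prop:RankinSelbergVVMFThetaposdefsplitcrossEisenstein} (with Remark~\ref{rem:Eis_locunifbound} justifying the passage through the integral), and the functional equation follows by inserting the scattering expansion of Proposition~\ref{prop:VVEis_funequ} and matching the change of variables $s \mapsto \overline{s}+k-1$. The bookkeeping you flag is exactly the content the paper leaves implicit, so no gap.
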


    \subsection{Product expansions}
	The formulae for Fourier coefficients of recursive nature which had been
    induced by a Hecke action in Section~\ref{sec:Hecketheory} allow for deriving product
    expansions of the $L$-series defined above. Recall the definition of~$\epsilon_{d}$
    from~\eqref{eq:de_epsilond} and for~$p$ an odd prime and~$\Df$ a discriminant form, set
	\begin{equation}
      \chi_{\Df}(p) \coloneq \epsilon_p^{\sig (\Df) + \left( \tfrac{-1}{|\Df|}\right)} \left( \frac{p}{|\Df| 2^{\sig(\Df)}} \right). 
	\end{equation}

	\begin{prop}\label{prop:VVMF_Lseries_eigenform_extract_factor} 
      Let $N \in \N$ and $p$ be a prime with $p \nmid \level(L) \cdot N$. Assume
      $f \in \CF_{\Df,k}$ is an eigenform of $\Hoph(p^2)$ with eigenvalue
      $\Hev_{p}$. Then for $\lambda \in \Df$ and $0 < t \in \Z + \qd(\lambda)$ such that
      $p^2 \nmid \level(L)t$ we find that
      \begin{align*}
        L_{(\lambda,t)}^{N}(f,s) 
        =\;	& 	L_{(\lambda,t)}^{Np}(f,s) \\
			& \cdot 
              \begin{cases} 
				\frac{1 + \delta_{p \nmid t} \frac{G_{\Df}(1)}{G_{\Df}(p)} \cdot p^{k-1-s} }{ 1 - \left( \frac{\Hev_{p}}{p^{k-1}} - (1-p^{-1}) \frac{G_{\Df}(1)}{G_{\Df}(p)} \right)  \cdot  p^{k-1-s} + p^{2(k-1-s)} } 	&,	2 \mid \rk(L), \\
				\frac{1 - \chi_{\Df}(p) \left( \frac{-t}{p} \right) p^{-1/2} \cdot p^{k-1-s} }{ 1 - \frac{\Hev_{p}}{p^{k-1}} \cdot p^{k-1-s} + p^{2(k-1-s)}} 	&,	2 \nmid \rk(L).
              \end{cases}
      \end{align*} 
      Given $L_{(\lambda,t)}^{N}(f,\argdot)$ is nonzero, the rational function in
      $p^{-s}$ is well defined and does not vanish in the right half plane of
      absolute convergence of~$L_{(\lambda,t)}^{N}(f,\argdot)$ determined by
      Corollary~\ref{cor:LellNfsabsoluteconvergence}.
	\end{prop}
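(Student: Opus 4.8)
The plan is to localise the Dirichlet series at $p$ and reduce the claim to a single generating-series identity. Writing every summation index as $n = p^{j} m$ with $p \nmid m$ and $\gcd(m,N) = 1$, so that $\gcd(m,Np) = 1$, Definition~\ref{de:symmsqLfunVVMF} gives
\begin{equation*}
  L_{(\lambda,t)}^{N}(f,s) = \sum_{\gcd(m,Np) = 1} m^{-s} \sum_{j \geq 0} \frac{a(p^{j} m \lambda,\, p^{2j} m^{2} t)}{p^{js}}.
\end{equation*}
Hence it suffices to show that for each fixed $m$ coprime to $Np$ the inner sum equals $a(m\lambda, m^{2}t)$ times a rational function $E_{p}(s)$ in $p^{-s}$ that does not depend on $m$; summing over $m$ then reproduces exactly $L_{(\lambda,t)}^{Np}(f,s) \cdot E_{p}(s)$. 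Setting $X = p^{-s}$, I would package the inner sum as the local series $D(X) = \sum_{j \geq 0} a(p^{j} m\lambda,\, p^{2j} m^{2} t)\, X^{j}$ and aim to identify it as a quotient of polynomials in $X$.

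First I would extract from the Hecke theory of Section~\ref{sec:Hecketheory} the recursion satisfied by the coefficients along the $p$-tower. Because $f$ is a $\Hoph(p^{2})$-eigenform with eigenvalue $\Hev_{p}$ and $p \nmid \level(L) \cdot N$, the action of $\Hoph(p^{2})$ translates into a three-term relation expressing $a(p^{j+1}\mu, \dots)$ through $a(p^{j}\mu, \dots)$ and $a(p^{j-1}\mu, \dots)$, with the constants governed by $\Hev_{p}$ and by the Gauss sums $G_{\Df}$ in the even-rank case, respectively the quadratic character $\chi_{\Df}$ together with $\left(\tfrac{-t}{p}\right)$ in the odd-rank case. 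I would record this relation explicitly in each parity case. The hypothesis $p^{2} \nmid \level(L)\,t$ enters precisely here: it forces the descending coefficient indexed by $(\mu/p, \cdot/p^{2})$ to be an invalid index and hence to vanish at the bottom of the tower, so that the recursion is homogeneous with $j$-independent coefficients for $j \geq 1$ but carries a modified initial step from $j = 0$ to $j = 1$. This is what will ultimately produce a genuine degree-one numerator rather than a pure geometric factor.

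Then I would solve the recursion by the standard generating-function manipulation: multiplying the three-term relation by $X^{j+1}$ and summing over $j \geq 1$ collapses $D(X)\,(1 - \alpha X + \beta X^{2})$ to its initial terms, where $\alpha, \beta$ are read off from the recursion. Rewriting everything in the variable $p^{k-1-s} = p^{k-1} X$, the denominator becomes $1 - \bigl(\tfrac{\Hev_{p}}{p^{k-1}} - (1-p^{-1})\tfrac{G_{\Df}(1)}{G_{\Df}(p)}\bigr) p^{k-1-s} + p^{2(k-1-s)}$ in the even case and $1 - \tfrac{\Hev_{p}}{p^{k-1}}\, p^{k-1-s} + p^{2(k-1-s)}$ in the odd case, matching the claim; the modified initial step supplies the numerator, whose linear term is $a(m\lambda, m^{2}t)$ times the universal factor $\delta_{p \nmid t}\,\tfrac{G_{\Df}(1)}{G_{\Df}(p)}$, respectively $-\chi_{\Df}(p)\left(\tfrac{-t}{p}\right)p^{-1/2}$. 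Crucially, the whole numerator is $a(m\lambda, m^{2}t)$ times a factor independent of $m$, so $D(X) = a(m\lambda, m^{2}t)\, E_{p}(s)$, and the factorisation $L^{N} = L^{Np} \cdot E_{p}$ follows on summing over $m$.

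For the final assertion I would argue within the half-plane $\Re(s) > k + 1 - 2\boundoptimiser$ of absolute convergence from Corollary~\ref{cor:LellNfsabsoluteconvergence}. Factoring the denominator as $(1 - \alpha_{1} p^{k-1-s})(1 - \alpha_{2} p^{k-1-s})$ with $\alpha_{1}\alpha_{2} = p^{2(k-1)}$, the eigenvalue bound implicit in that Corollary (coming from the Fourier-coefficient estimates, which control $\Hev_{p}$ through the base step) forces $\lvert\alpha_{i}\, p^{-s}\rvert \neq 1$ throughout the region, so $E_{p}(s)$ is holomorphic and well defined there. For non-vanishing of the numerator, the correction term has modulus $p^{k-1-\Re(s)}$ times a factor bounded by the character values or the Gauss-sum ratio, which is strictly less than $1$ in this range, so the numerator cannot vanish either; the non-degeneracy hypothesis $L_{(\lambda,t)}^{N}(f,\argdot) \neq 0$ then guarantees the factorisation is meaningful, i.e. $E_{p}$ is not the zero function. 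I expect the main obstacle to be the bookkeeping of the Hecke recursion in the two parity cases: pinning down the exact constants $\alpha, \beta$ and verifying that the truncation forced by $p^{2} \nmid \level(L)\,t$ yields precisely the stated numerators, including the Gauss-sum ratio and the quadratic twist, is where the real work lies.
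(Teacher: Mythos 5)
Your construction of the local factor is essentially the paper's own argument: the paper likewise forms the Shimura-style generating series $H_n(x)=\sum_{m\geq 0}a(p^m n\lambda,(p^m n)^2t)\,x^m$ for $p\nmid n$, writes down the two Hecke relations (the base step, in which $p^2\nmid \level(L)t$ kills the descending coefficient, and the homogeneous three-term step for $m\geq 1$), multiplies by $x^{m+1}$, sums, and solves for $H_n(x)=a(n\lambda,n^2t)\,Q(x)$ before resumming over $n$ coprime to $Np$. Your identification of the numerators and denominators in the two parity cases matches this, and your remark that the truncated base step is what produces the degree-one numerator is exactly the mechanism at work.

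The gap is in the final assertion. You propose to show that the denominator $P(p^{-s})$ has no zeros in the half-plane of Corollary~\ref{cor:LellNfsabsoluteconvergence} by invoking an ``eigenvalue bound implicit in that Corollary''. No such bound is there: the corollary only controls Fourier coefficients, and the only eigenvalue estimate in the paper is $\lvert \Hev_p\rvert < p^{k-1}(p+1)$ (Lemma~\ref{lem:VVMF_Kohnenbound}). Writing the denominator as $(1-\alpha_1 p^{-s})(1-\alpha_2 p^{-s})$ with $\alpha_1\alpha_2=p^{2(k-1)}$, that estimate only gives $\lvert\alpha_i\rvert\lesssim p^{k}(1+O(p^{-1}))$, so $\lvert\alpha_i p^{-s}\rvert<1$ is not guaranteed near the lower edge $\Re(s)>k$ of the even-rank convergence region; your root-location argument does not close. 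The paper sidesteps this entirely: fixing $n$ with $a(n\lambda,n^2t)\neq 0$, the formal identity gives $P(p^{-s})H_n(p^{-s})=a(n\lambda,n^2t)\,R(p^{-s},t)$ wherever $H_n(p^{-s})$ converges; since $R(p^{-s},t)$ is $1$ plus a term of modulus at most $p^{k-1-\Re(s)}$ (resp.\ $p^{k-3/2-\Re(s)}$ in the odd case), it is nonzero for $\Re(s)>k-1$ (resp.\ $\Re(s)>k-3/2$), which is weaker than the convergence bound, and then no factor on the left may vanish, so $P(p^{-s})\neq 0$ and $Q(p^{-s})=H_n(p^{-s})/a(n\lambda,n^2t)\neq 0$. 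You should replace your denominator argument by this observation (or by a genuinely sharper bound on $\Hev_p$, which is not available here); your treatment of the numerator is fine and agrees with the paper.
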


	\begin{proof}
      Recall that the Fourier coefficients of $f \in \CF_{\Df,k}$ are denoted
      $a(\lambda,n)$ as in \eqref{eq:VVMF_FourierExp}. Following \cite[p.~452]{ShimuraMF_HalfIntegralWeight}, define for $n \in \N$ the formal
      power series
      \[
		H_n(x) \coloneq \sum_{m=0}^\infty a(p^m n \lambda,(p^mn)^2 t) x^m.
      \]
      We seek to prove that there is a rational function $Q(x)$ such that
      \begin{align*}
        H_{n}(x) = a(n \lambda, n^2 t) \cdot  Q(x) 	\tag{$\ast$}. 
      \end{align*}
      Then,
      \begin{align*}
        L_{(\lambda,t)}^{N}(f,s) 
        =\;&	\sum_{\substack{n=1 \\ \gcd(n,N)}}^\infty a(n \lambda , n^2 t) n^{-s} \\
        =\;&	\sum_{\substack{n=1 \\ \gcd(n,Np)}}^\infty H_{n}(p^{-s}) n^{-s} 
        =\;		L_{(\lambda,t)}^{Np}(f,s) \cdot Q(p^{-s}).
      \end{align*}
      We will compute $Q(x)$ in the case of even and odd rank separately.
		
      \textbf{The case of even rank}: In this case, Proposition~\ref{prop:VVMF_Heckeops_Fouroercoefficients_even} yields
      \[
		\Hev_{p} \cdot a(\lambda,n) = a(p\lambda ,p^2 n) + \frac{G_{\Df}(1)}{G_{\Df}(p)} p^{k-2} \left(p \delta_{p \mid n} -1 \right) a(\lambda,tn) + p^{2(k-1)} a(\lambda/p,n/p^2)
      \]
      resulting for $n , m \in \N$ with $p \nmid n$ and $p^2 \nmid \level(L)t$ in
      \begin{align*}
        \Hev_{p} \cdot a(n\lambda , n^2 t) 				=\; &a(pn\lambda ,p^2 n^2t) + \frac{G_{\Df}(1)}{G_{\Df}(p)} p^{k-2} \left(p \delta_{p \mid t} -1 \right) a(n \lambda, n^2t), \tag{I}\\
        \Hev_{p} \cdot a(p^mn \lambda, p^{2m}n^2 t) 	=\; &a(p^{m+1}n\lambda, p^{2(m+1)}n^2 t)  \\
                                                    &+ (p-1) \frac{G_{\Df}(1)}{G_{\Df}(p)} p^{k-2}  a(p^mn\lambda,p^{2m}n^2 t) \\
                                                    & +	 p^{2(k-1)} a(p^{m-1}n\lambda,p^{2(m-1)}n^2t). \tag{II}
      \end{align*}
      Multiplying these equations by the formal variable $x^{m+1}$ and summing
      them results in
      \begin{align*}
        \Hev_{p} x \cdot H_n(x) 
        =\;	&	H_n(x) - a(n \lambda, n^2 t) 
              + (p-1) \frac{G_{\Df}(1)}{G_{\Df}(p)} p^{k-2}  x H_n(x) \\
            &-	   \delta_{p \nmid t} \frac{G_{\Df}(1)}{G_{\Df}(p)} p^{k-1} a(n \lambda, n^2 t) x
              +	p^{2(k-1)} x^2 H_n(x). 
      \end{align*}
      A straightforward rearrangement yields
      \begin{align*}
        H_n(x) = 	&\; a(n \lambda, tn^2) \cdot \left( 1 + \delta_{p \nmid t} \frac{G_{\Df}(1)}{G_{\Df}(p)} p^{k-1}  x\right) \\
        \cdot 			&\left[ 1 - \Hev_{p} x + (p-1) \frac{G_{\Df}(1)}{G_{\Df}(p)} p^{k-2}  x + p^{2(k-1)} x^2 \right]^{-1}.
      \end{align*}
      \textbf{The case of odd rank}: In this case, Theorem~\ref{thm:VVMF_Heckeops_Fouroercoefficients_odd} yields
      \[
		\Hev_{p} \cdot a(\lambda,n) = a(p\lambda ,p^2 n) + \frac{\chi_{\Df}(p)}{\sqrt{p}} \left( \frac{-n}{p} \right) p^{k-1} a(\lambda,n) + p^{2(k-1)} a(\lambda/p,n/p^2)
      \]
      where we have abbreviated
      $\chi_{\Df}(p) \coloneq \varepsilon_p^{\sig (A) + \left( \tfrac{-1}{|A|}\right)} \left( \frac{p}{|A| 2^{\sig(A)}} \right) $.
      This results for $p \nmid n$, $m \in \N$ and $p^2 \nmid \level(L) t$ in
      \begin{align*}
        \Hev_{p} \cdot a(n\lambda ,  n^2 t) 				&= a(pn\lambda , p^2 n^2 t) + \frac{\chi_{\Df}(p)}{\sqrt{p}} \left( \frac{-t}{p} \right) p^{k-1} a(n \lambda, n^2 t), \tag{I}\\
        \Hev_{p} \cdot a(p^mn \lambda, p^{2m}n^2 t) 	&= a(p^{m+1}n\lambda,p^{2(m+1)}n^2 t) +	 p^{2(k-1)} a(p^{m-1}n\lambda,p^{2(m-1)}n^2t). \tag{II}
      \end{align*}
      Analogously to the situation of even rank we find
      \begin{align*}
        \Hev_{p} x \cdot H_n(x) 
        =\;	&	H_n(x) - a(n \lambda, n^2 t) 
              + \frac{\chi_{\Df}(p)}{\sqrt{p}} \left( \frac{-t}{p} \right) p^{k-1}  a(n \lambda, n^2 t) x \\
			&+	 p^{2(k-1)} x^2 H_n(x)
      \end{align*}
      resulting in
      \[
		H_n(x) = a(n \lambda, n^2 t) \left( 1 - \frac{\chi_{\Df}(p)}{\sqrt{p}} \left( \frac{-t}{p} \right) p^{k-1} x\right) \cdot \left[ 1 - \Hev_{p} x + p^{2(k-1)} x^2 \right]^{-1}.
      \]
      This settles the shape of the rational factor.
      \\
      Next, we will investigate the behaviour of the numerator and denominator
      of the rational factor. Write
      \[
		Q(x) = \frac{R(x,t)}{P(x)}
      \]
      for the rational function in $x$. Now assume that
      $L_{(\lambda,t)}^{N}(f,\argdot) \neq 0$, for otherwise there is no behaviour to
      consider. We may fix an $n \in \N$ with $\gcd(n,N) = 1$ such that
      $a(n \lambda, n^2 t) \neq 0$ and assume without loss of generality that $p \nmid n$.
      Then we find for $s$ in the right halfplane of convergence of
      $L_{(\lambda,t)}^{N}(f,\argdot)$ that
      \begin{align*}
        P(p^{-s}) \cdot H_n(p^{-s}) = a(n \lambda, n^{2} t) \cdot R(p^{-s},t). 		\end{align*}
      Observe that unless $R(p^{-s},t) = 0$, no factor in the above equation may vanish. 
      In fact, in case of $p \mid t$, we find $R(p^{-s},t) = 1$ regardless of $s$. 
      On the other hand, if $p \nmid t$, all factors in the second term of $R(p^{-s},t)$ that are not a power of $p$ have absolute value $1$. 
      Hence, we find that 
      \[
		R(p^{-s},t) \neq 0 \textnormal{ for }
		\begin{cases}
          \Re(s) > k - 1,  & 2 \mid \rk(L), \\
          \Re(s) > k - 3/2 & 2 \nmid \rk(L).
		\end{cases}
      \]
      These bounds are strictly sharper than the bounds of absolute convergence
      for $L_{(\lambda,t)}^{N}(f,s)$ imposed on $\Re(s)$ in Corollary~\ref{cor:LellNfsabsoluteconvergence}.
    \end{proof}

	\begin{de}\label{de:infiniteproduct_absoluteconvergence}
      Let $(a_{n})_{n \in \N} \in \C$ be a sequence of numbers. Then, the infinite
      product
      \[
		\prod_{n \in \N} (a_{n})
      \]
      is said to be \emph{absolutely convergent},\index{infinite product!absolute convergence}\index{convergence!absolute!infinite product} if the following series
      converges:
      \[
		\sum_{n \in \N} \, \lvert a_{n} - 1 \rvert.
      \]
	\end{de}

	A key property of absolutely convergent products is that such a product
    attains the value zero, if, and only if, at least one of its factors
    vanishes. We refer the reader to \cite[IV p.~200]{Freitag1} for more on that matter and
    proceed by applying Proposition~\ref{prop:VVMF_Lseries_eigenform_extract_factor} in order to derive infinite product
    expansions.
	
	\begin{cor}\label{cor:Lseries_full_product_good}
      Let $2 \leq k \in \tfrac{1}{2}\Z$ and assume $f \in \CF_{\Df,k}$ is a
      simultaneous eigenform of $\Hophk(p^2)$ with eigenvalue $\Hev_{p}$ for all
      primes $p \nmid \level(L)$. Denote the Fourier expansion of $f$ by
      \begin{align*}
        f(\tau) = \sum_{\mu \in \Df} \sum_{n \in \qd(\lambda) + \Z} a(\mu, n) \cdot e(n\tau) \efr_{\mu} 
      \end{align*}
      and select $\lambda \in \Df$ as well as $t \in \qd(\lambda) + \Z$ with $p^2 \nmid \level(L)t$
      for all $p \nmid \level(L)$. Then
      \begin{align*}
        L_{(\lambda,t)}(f,s) 
        =\;	& 	\left( \sum_{\substack{n \in \N \\ n \mid \level(L)^{\infty}}} \frac{a(n \lambda, n t)}{n^{s}} \right)  
        \\
			& \; \cdot \prod_{p \nmid \level(L)}
              \begin{cases} 
				\frac{1 + \delta_{p \nmid t} \frac{G_{\Df}(1)}{G_{\Df}(p)} \cdot p^{k-1-s} }{ 1 - \left( \frac{\Hev_{p}}{p^{k-1}} - (1-p^{-1}) \frac{G_{\Df}(1)}{G_{\Df}(p)} \right)  \cdot  p^{k-1-s} + p^{2(k-1-s)} } 	&,	2 \mid \rk(L), \\
				\frac{1 - \chi_{\Df}(p) \left( \frac{-t}{p} \right) p^{-1/2} \cdot p^{k-1-s} }{ 1 - \frac{\Hev_{p}}{p^{k-1}} \cdot p^{k-1-s} + p^{2(k-1-s)}} 	&,	2 \nmid \rk(L) 
              \end{cases}
      \end{align*} 
      is absolutely convergent as an infinite product for $\Re(s) > k + 1$ and
      nonzero.
	\end{cor}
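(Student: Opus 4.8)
The plan is to build the factorisation by iterating Proposition~\ref{prop:VVMF_Lseries_eigenform_extract_factor} over all primes $p \nmid \level(L)$ and then passing to the limit, the whole difficulty being concentrated in a size estimate for the local factors that controls the resulting infinite product.

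First I would enumerate the primes $p \nmid \level(L)$ and check that, for each of them, the hypotheses of Proposition~\ref{prop:VVMF_Lseries_eigenform_extract_factor} hold: $f$ is a simultaneous $\Hophk(p^2)$-eigenform, $p \nmid \level(L)$, and $p^2 \nmid \level(L)\,t$ by assumption on $t$. Writing $Q_p(p^{-s})$ for the rational local factor displayed in that proposition and $N_S = \prod_{p \in S} p$ for a finite set $S$ of such primes, repeated application gives the finite identity $L_{(\lambda,t)}(f,s) = L_{(\lambda,t)}^{N_S}(f,s) \cdot \prod_{p \in S} Q_p(p^{-s})$, valid on the half plane of absolute convergence, which by Corollary~\ref{cor:LellNfsabsoluteconvergence} contains $\Re(s) > k+1$, the abscissa $k+1-2\boundoptimiser$ there being at most $k+1/2$.

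The core of the proof is to verify that $\prod_{p \nmid \level(L)} Q_p(p^{-s})$ is absolutely convergent in the sense of Definition~\ref{de:infiniteproduct_absoluteconvergence}, i.e.\ that $\sum_{p} \lvert Q_p(p^{-s}) - 1 \rvert$ converges locally uniformly for $\Re(s) > k+1$. Writing each factor as a quotient and forming $Q_p(p^{-s}) - 1 = (\mathrm{num} - \mathrm{denom})/\mathrm{denom}$, the denominators tend to $1$ and are bounded away from $0$ as $p \to \infty$, so everything reduces to the numerator difference. In both parities this difference is, with $\lvert p^{-s}\rvert = p^{-\Re(s)}$, a sum of three contributions of respective sizes $\lvert \Hev_p \rvert\, p^{-\Re(s)}$, $p^{\,k-1-\Re(s)}\lvert G_{\Df}(1)/G_{\Df}(p)\rvert$ (resp.\ $p^{\,k-3/2-\Re(s)}$ in the odd case), and $p^{\,2(k-1)-2\Re(s)}$. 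The last two are $\LandO(p^{-1-\delta})$ as soon as $\Re(s) > k$, using that $G_{\Df}(1)/G_{\Df}(p)$ is bounded; the decisive one is the first, which is $\LandO(p^{-1-\delta})$ precisely when $\lvert \Hev_p\rvert \ll p^{\Re(s)-1-\delta}$. Feeding in the trivial eigenvalue bound $\lvert \Hev_p\rvert \ll p^{k}$---obtained from the trivial growth $n^{k/2}$ of the cusp-form Fourier coefficients propagated through the Hecke recursions of Proposition~\ref{prop:VVMF_Heckeops_Fouroercoefficients_even} and Theorem~\ref{thm:VVMF_Heckeops_Fouroercoefficients_odd}---yields summability exactly for $\Re(s) > k+1$. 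I expect this estimate, and in particular nailing down the bound $\lvert \Hev_p\rvert \ll p^{k}$ together with the size of the Gauss-sum ratio $G_{\Df}(1)/G_{\Df}(p)$, to be the main obstacle; it is what pins the convergence abscissa to $k+1$.

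With absolute convergence secured, I would let $S$ exhaust $\{p : p \nmid \level(L)\}$. The partial products converge to $\prod_{p \nmid \level(L)} Q_p(p^{-s})$, while $L_{(\lambda,t)}^{N_S}(f,s)$, being a subseries of the absolutely convergent $L_{(\lambda,t)}(f,s)$, converges termwise to the sum over those $n$ all of whose prime factors divide $\level(L)$, that is to the first factor displayed in the statement. Taking the limit in the finite identity produces the asserted factorisation. Nonvanishing of the product then follows from the property of absolutely convergent products recalled before the statement: such a product vanishes only if one of its factors does. By Proposition~\ref{prop:VVMF_Lseries_eigenform_extract_factor} each $Q_p(p^{-s})$ is a well-defined, nonvanishing rational function throughout the half plane of absolute convergence, and since $\Re(s) > k+1$ lies strictly inside the ranges $\Re(s) > k-1$ (even rank) and $\Re(s) > k - 3/2$ (odd rank) guaranteeing non-vanishing of the numerators there, no factor can vanish and the product is nonzero.
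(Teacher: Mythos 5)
Your proposal follows essentially the same route as the paper: extract the local factors via Proposition~\ref{prop:VVMF_Lseries_eigenform_extract_factor}, control $\lvert Q_p(p^{-s})-1\rvert$ by comparing numerator and denominator, observe that the surviving term of size $\lvert\Hev_p\rvert\,p^{-\Re(s)}$ is what pins the abscissa to $k+1$, and conclude nonvanishing from the standard property of absolutely convergent products together with the nonvanishing of each factor. The one soft spot is your source for the eigenvalue bound: you propose to obtain $\lvert\Hev_p\rvert\ll p^{k}$ by ``propagating the trivial coefficient bound $n^{k/2}$ through the Hecke recursions,'' but this is not a complete argument as stated --- turning coefficient growth into an eigenvalue bound requires a radius-of-convergence argument for the rational function $H_n$, and one must rule out that the numerator of the local factor cancels the relevant pole and that all coefficients $a(n\lambda,n^2t)$ along the chosen index vanish. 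The paper avoids this entirely by invoking Lemma~\ref{lem:VVMF_Kohnenbound}, which gives the uniform bound $\lvert\Hev_p\rvert< p^{k-1}(p+1)$ directly; you should cite that lemma (or supply the operator-norm argument behind it) rather than rely on the sketched propagation. Everything else, including the passage to the limit over finite sets of primes and the identification of the remaining sum over $n\mid\level(L)^{\infty}$, is correct and matches the paper.
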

	
	\begin{proof}
      The existence of the product is a consequence of Proposition~\ref{prop:VVMF_Lseries_eigenform_extract_factor} and a
      standard argument.
      \\
      In order to conclude absolute convergence of the product, the individual
      rational factors we denote by $Q_p(p^{-s},t)$ have to converge fast enough
      to $1$ for $p \to \infty$ and all choices of $s \in \C$ with $\Re(s) > k + 1$ (cf.\
      Definition~\ref{de:infiniteproduct_absoluteconvergence}). Introduce the following notation for the numerator and
      denominator
      \[
		Q_{p}(p^{-s},t) = \frac{R_p(p^{-s},t)}{P_p(p^{-s})}, \quad \textnormal{so that} \quad Q_{p}(p^{-s},t) - 1 = \frac{R_p(p^{-s},t) - P_p(p^{-s})}{P_p(p^{-s})}.
      \]
      If we prove that $P_{p}(p^{-s})$ converges to $1$, then it suffices to
      verify that $R_p(p^{-s},t) - P_p(p^{-s})$ converges to $0$ faster than
      $1/p$. As before, we distinguish the odd and even case, however, before
      specialising to one of these, recall that by Lemma~\ref{lem:VVMF_Kohnenbound} the following bound
      is true:
      \begin{align}\label{eq:Kohnen_bound:cor:EF_inf_product}
        \lvert \Hev_{p} \rvert < p^{k-1} (p+1).
      \end{align}
      \textbf{The Case of even signature:} We begin by bounding
      \begin{align*}
        \left\lvert P_{p}(p^{-s}) - 1 \right\rvert \leq\; 
        &\left\lvert \frac{\Hev_{p}}{p^{k-1}} - (1-p^{-1}) \frac{G_{\Df}(1)}{G_{\Df}(p)} \right\rvert p^{k-1-\Re(s)} + p^{2(k-1-\Re(s))}  \\ 
        =\;& \left( \frac{p+1}{p} + \frac{1 - p^{-1}}{p} \right) \cdot p^{k-\Re(s)} + p^{2(k-1-\Re(s))}.
      \end{align*}
      We see that for $\Re(s) > k$, the expression converges to $0$ for $p \to \infty$ as
      desired. Next, the numerator is investigated:
      \begin{align*}
        R_p(p^{-s},t) - P_{p}(p^{-s}) 
        =\;	& \left[ \frac{\Hev_{p}}{p^{k-1}} + \left( \delta_{p \nmid t} - (1-p^{-1}) \right)  \frac{G_{\Df}(1)}{G_{\Df}(p)} \right] \cdot p^{k-1-s} - p^{2(k-1-s)}.
      \end{align*}
      We recall that the quotient of Gauss sums defines a character \eqref{eq:QuotofGauss_char} in order
      to verify that the term in brackets may be absolutely bounded by
      \[
		\left\lvert \frac{\Hev_{p}}{p^{k-1}} + \left( \delta_{p \nmid t} - (1-p^{-1}) \right) \frac{G_{\Df}(1)}{G_{\Df}(p)} \right\rvert \leq (p+1) + (1-p^{-1/2}) = \left( 1 + 2 p^{-1} - p^{-2} \right) \cdot p.
      \]
      As a consequence, the difference $R_p(p^{-s},t) - P_{p}(p^{-s})$ goes to
      zero for $p \to \infty$ in case of~$\Re(s) > k$. It goes to $0$ faster than $1/p$
      for $\Re(s) > k + 1$. This settles absolute convergence of the product in
      the even case.
		
      \textbf{The case of odd signature:} We begin by bounding
      \begin{align*}
        \lvert P_{p}(p^{-s}) -1 \rvert \leq\; 
        &\left\lvert \frac{\Hev_{p}}{p^{k-1}} \right\rvert \cdot p^{k-1-\Re(s)} + p^{2(k-1-\Re(s))}  \\
        =\;& \frac{p+1}{p} \cdot p^{k-\Re(s)} + p^{2(k-1-\Re(s))}.
      \end{align*}
      We see that for $\Re(s) > k $, the expression converges to $0$ for $p \to \infty$.
      Next, the numerator is investigated:
      \begin{align*}
        R_p(p^{-s},t) - P_{p}(p^{-s}) 
        =\;	&\left[ \frac{\Hev_{p}}{p^{k-1}} - \chi_{\Df}(p) \left( \frac{-t}{p} \right) p^{-1/2} \right] \cdot p^{k-1-s} - p^{2(k-1-s)}. 
      \end{align*}
      The term in brackets may be bounded by
      \[
		\left\lvert \frac{\Hev_{p}}{p^{k-1}} - \chi_{\Df}(p) \left( \frac{-t}{p} \right) p^{-1/2} \right\rvert \leq (p+1) + \sqrt{p}^{-1} = [1 + p^{-1} + p^{-3/2}] \cdot p.
      \]
      As a consequence, the difference $Q_p(p^{-s},t) - P_{p}(p^{-s})$ goes to
      zero for $p \to \infty$ in case $\Re(s) > k$. It goes to $0$ faster than $1/p$ for
      $\Re(s) > k + 1$. This settles absolute convergence of the product in the
      odd case.
      \\
      Recall that the ranges of absolute convergence for $L_{(\lambda,t)}^{N}(f,s)$
      established in Corollary~\ref{cor:LellNfsabsoluteconvergence} are less restrictive than $\Re(s) > k + 1$.
	\end{proof}

	We proceed to investigate the case of primes $p$ dividing $\level(L)$.
    Beforehand, recall that the associated discriminant form $\Df$ decomposes as
    an orthogonal direct sum of $p$-subgroups $\Df = \oplus_{p} \Df$ and for
    $\lambda \in \Df$ and a prime $p$ we write $\lambda_{p}$ for its $p$-component in
    $\Df_{p}$.

	\begin{prop}\label{prop:VVMF_Lseries_eigenform_extract_factor_alloddprimes}
      Let $f \in \CF_{L,k}$ with Fourier expansion
      \[
		f(\tau) = \sum_{\mu \in \Df} \sum_{n \in \qd(\mu) + \Z} a(\mu,n) \cdot e(n\tau) \efr_{\mu}.
      \]
      Assume $p \neq 2$ is a prime such that $f$ is an eigenform of $\Hophk(p^2)$
      with eigenvalue $\Hev_{p}$ and that there is no nonzero
      $\lambda' \in \mathcal{L}_p$ with $\qd(\lambda') = 0$. Select $\lambda \in \Df$ and
      $t \in \qd(\lambda) + \Z$ such that if $p^2 \mid \level(L)t$ we have that $\lambda_p$ is
      nonzero and~$f$ is invariant under $\OG(\Df_{p})$. Then we find for a
      natural number $N \in \N$ with $p \nmid N$ that
      \begin{align*}
        L_{(\lambda,t)}^{N}(f,s) 
        =\; & L_{(\lambda,t)}^{Np}(f,s) \\
			& \cdot 
              \frac{1 + K_{L,p} 
              \begin{cases}
                \delta_{\lambda_{p} \neq 0} (1 - p^{-1}) + \delta_{\lambda_{p} = 0} \delta_{p \nmid t}  , & 2 \mid R_{p} \\ 
                - \delta_{\lambda_{p} = 0} p^{-1/2} \left(\frac{-t}{p}\right) \epsilon_{p}, & 2 \nmid R_{p}
              \end{cases} 
              \cdot p^{k-1-s} + C(\lambda_{p}) p^{2(k-1-s)}}
              {1 - \left( \frac{\Hev_{p}}{p^{k-1}} - \delta_{2 \mid R_{p}} (1-p^{-1}) K_{L,p} \right) p^{k-1-s} + p^{2(k-1-s)}}.
      \end{align*} 
      If $L_{(\lambda,t)^{N}}(f,\argdot) \neq 0$, the rational factor in $p^{k-1-s}$
      extracted is well defined and nonzero for $\Re(s) > \rk(\Df_{p})/2 + k - 1$.
      Here,
      $K_{L,p} = p^{- \rk(L)/2} \sum_{\nu \in L/pL} e\left( \tfrac{1}{p} \qq(\nu) \right)$
      and $R_{p}$ has been given in Definition~\ref{de:nk_and_R}. Further, the constant
      $C(\lambda_{p}) + 1$ equals the size of the orbit of $\lambda_{p} \in \Df_{p}$ under
      $\OG(\Df_{p})$. In particular, $C(\lambda_{p}) = 0$, if $\lambda_{p} = 0$.
	\end{prop}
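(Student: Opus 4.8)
The plan is to mirror the proof of Proposition~\ref{prop:VVMF_Lseries_eigenform_extract_factor} and, for the fixed odd prime $p \mid \level(L)$, to reduce the claim to computing a single rational generating function. As before I would set
\[
H_n(x) \coloneq \sum_{m = 0}^{\infty} a\bigl(p^{m} n \lambda,\, (p^{m} n)^{2} t\bigr)\, x^{m},
\]
and aim to exhibit a rational $Q(x) = R_p(x,t)/P_p(x)$ with $H_n(x) = a(n\lambda, n^{2} t)\, Q(x)$ for $p \nmid n$. Splitting the sum defining $L_{(\lambda,t)}^{N}(f,s)$ according to the exact power of $p$ dividing $n$ (legitimate since $p \nmid N$) then factors $L_{(\lambda,t)}^{N}(f,s) = L_{(\lambda,t)}^{Np}(f,s) \cdot Q(p^{-s})$ exactly as in the good-prime case, so the whole content lies in identifying $Q$ and controlling its zeros.

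The decisive input is the bad-prime analogue of the Fourier-coefficient recursions of Proposition~\ref{prop:VVMF_Heckeops_Fouroercoefficients_even} and Theorem~\ref{thm:VVMF_Heckeops_Fouroercoefficients_odd}, applied at a prime dividing the level. Here the normalisation $G_{\Df}(1)/G_{\Df}(p)$ of the good case is replaced by the lattice Gauss sum $K_{L,p} = p^{-\rk(L)/2} \sum_{\nu \in L/pL} e\bigl(\tfrac{1}{p}\qq(\nu)\bigr)$, and the descent term $a(\lambda/p, n/p^{2})$ no longer refers to a single class: at a bad prime the classes mapping to $\lambda_p$ under multiplication by $p$ form an $\OG(\Df_p)$-orbit. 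This is precisely where the hypotheses enter. Invariance of $f$ under $\OG(\Df_p)$ lets me replace each such preimage coefficient by the common orbit value, so that the contribution of the descent term is governed by the orbit size, which by definition equals $C(\lambda_p) + 1$; the absence of a nonzero isotropic $\lambda' \in \mathcal{L}_p$ guarantees that the degenerate preimage does not occur and keeps the recursion closed; and requiring $\lambda_p \neq 0$ whenever $p^2 \mid \level(L) t$ excludes the one configuration in which this orbit average would be unavailable. Carrying the bookkeeping through, I expect a base relation for $m = 0$ and a three-term relation for $m \geq 1$ whose coefficients are exactly the $K_{L,p}$-, $\delta$-, and $C(\lambda_p)$-expressions in the statement, with the case split dictated by the parity of $R_p$ and by whether $\lambda_p$ vanishes.

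With these relations established the generating-function step is mechanical: multiplying the $m$-th relation by $x^{m+1}$, summing over $m \geq 0$, and collapsing the three-term recursion yields a linear identity for $H_n(x)$, and solving for $H_n(x)$ produces $Q(x) = R_p(x,t)/P_p(x)$ with the displayed numerator and denominator. For the zero-free statement I would argue as in Proposition~\ref{prop:VVMF_Lseries_eigenform_extract_factor}: clearing denominators gives $P_p(p^{-s}) H_n(p^{-s}) = a(n\lambda, n^{2}t)\, R_p(p^{-s}, t)$ for a suitable $n$ with $a(n\lambda, n^2 t) \neq 0$, so no factor vanishes unless $R_p(p^{-s}, t) = 0$. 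The Kronecker symbol and $\epsilon_p$ factors have modulus one, $C(\lambda_p)$ is bounded for fixed $p$, and $\lvert K_{L,p}\rvert = p^{\rk(\Df_p)/2}$; estimating $\lvert R_p(p^{-s},t)\rvert$ and $\lvert P_p(p^{-s})\rvert$ against geometric series in $p^{k-1-\Re(s)}$ then forces both nonzero once $\Re(s) > \rk(\Df_p)/2 + k - 1$, the binding term being the $K_{L,p}\,p^{k-1-s}$ contribution. The case distinctions $\delta_{\lambda_p = 0}$, $\delta_{p \nmid t}$ collapse individual summands but never worsen this estimate.

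The main obstacle I anticipate is the derivation and correct organisation of the bad-prime recursion itself — in particular verifying that $\OG(\Df_p)$-invariance of $f$ converts the ambiguous descent term into the clean orbit factor $C(\lambda_p) + 1$, and pinning down the precise shape of $K_{L,p}$ together with the parity-dependent coefficients. Once that recursion is in hand, both the generating-function computation and the zero-free estimate are routine adaptations of the good-prime argument.
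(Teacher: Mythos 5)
Your proposal follows essentially the same route as the paper: the same generating function $H_n(x)$, the bad-prime Hecke recursion with $K_{L,p}$ in place of the Gauss-sum quotient, the use of $\OG(\Df_{p})$-invariance (via Witt extension) to convert the ambiguous descent term into the orbit factor $C(\lambda_{p})+1$, the collapse of the descent sum via the no-nonzero-isotropic hypothesis, and the same clearing-of-denominators estimate for the zero-free region. The details you leave as ``bookkeeping'' --- the explicit evaluation of the local Gauss sum $g_p$ in the middle Hecke term, which is where the parity-of-$R_{p}$ case split and the $\delta_{p\nmid t}$, Legendre-symbol and $\epsilon_{p}$ factors actually originate, and the bound $\lvert K_{L,p}\rvert \leq p^{\rk(\Df_{p})/2}$ (an inequality, not the equality you state) --- are exactly what the paper's proof carries out.
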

	
	In fact, sharper bounds for the nonvanishing of the rational factor may be
    derived.
	
	\begin{proof}
      The proof follows the structure of the proof of Proposition~\ref{prop:VVMF_Lseries_eigenform_extract_factor}, but
      requires additional technical considerations. Along these lines define for
      $n \in \N$ the series
      \(H_n(x) \coloneq \sum_{m=0}^\infty a(p^m n \lambda,(p^mn)^2 t) x^m\). We will prove
      that there is a rational function $Q(x)$ such that
      \begin{align*}
        H_{n}(x) = a(n \lambda, n^2 t) \cdot  Q(x). \tag{$\ast$}
      \end{align*}
      Recall that by Proposition~\ref{prop:VVMF_Heckeops_Fouriercoefficients_bad_primes}, the operation of $\Hophk(p^2)$ on the
      Fourier coefficients of~$f$ reads
      \begin{align*}
        \Hev_{p} a(\lambda,n) =\;	& p^{2(k-1)} \delta_{\lambda \in \Dfmult{p}} \hspace{-.5em} \sum_{\substack{\lambda' \in \Dftors{p} \\ n - p^{2} \qd(\lambda/p + \lambda') \in p^{2} \Z }} \hspace{-.5em} a\left( \lambda/p + \lambda',\frac{n - p^{2} \qd(\lambda/p + \lambda')}{p^{2}} + \qd(\lambda/p + \lambda') \right) \\
        +\;	&  p^{k-2} \delta_{\lambda \in \Dfmult{p}} K_{L,p} g_p\left[1,{_-\chi_p}^{R_{p}}, n - \qq(p \cdot \posvec_{\lambda/p})\right] \; a\left(\lambda, n \right)  \\
        +\;	&	a(p \lambda,p^{2}n).
      \end{align*}
      This results for some $n,m \in \N$ with $p \nmid n$ and $p^2 \nmid \level(L)t$ or
      $\lambda_{p} \neq 0$ in\footnote{ Note that the group $\Df_{p}$ may only contain constituents of the form $\Z/p\Z$. }
      \begin{align*}
        \Hev_{p} a(n\lambda,n^2t) 
        =\;	&  p^{k-2}\delta_{\lambda \in \Dfmult{p}} K_{L,p}   g_p\left[1,{_-\chi_p}^{R_{p}}, n^2 ( t - \qq(p \cdot \posvec_{\lambda/p}))\right] \; a\left(n\lambda, n^2t \right)  \\
        +\;	&	a(pn \lambda,(pn)^{2}t) \tag{I}
      \end{align*}
      and
      \begin{align*}
        \Hev_{p} a(p^mn\lambda,(p^mn)^2t) 
        =\;	& p^{2(k-1)} \sum_{\substack{\lambda' \in \Dftors{p} \\ (p^{m}n)^2t - p^{2} \qd(p^mn\lambda/p + \lambda') \in p^{2} \Z }} a\left( (p^{m}n\lambda)/p + \lambda' , (p^mn)^2 t/p^2 \right) \\
        +\;	&  p^{k-2} K_{L,p}   g_p\left[1,{_-\chi_p}^{R_{p}},  (p^mn)^2 t - \qq(p \cdot \posvec_{(p^mn\lambda)/p}))\right] a\!\left(p^mn\lambda, (p^mn)^2 t\right)  \\
        +\;	&	a(p^{m+1}n \lambda,(p^{m+1}n)^{2}t). \tag{II}
      \end{align*}
      First, the Gauss sum $g_p$ given in Definition~\ref{de:Gauss_sums} and appearing above
      will be expressed more explicitly. We begin by understanding the
      expression in (II) meaning the case of $m \in \N$ being different from zero.
      Note that for a fixed choice $\posvec_{\lambda} \in L'$ projecting to $\lambda \in \Df$ we
      have $t = \qq(\posvec_{\lambda}) + r$ for some integer $r \in \Z$. The expression
      $g_{p}$ in (II) above, however, is independent of that residue $r$ as it
      is multiplied by $p^m$. Further, recall that by Proposition~\ref{prop:VVMF_Heckeops_Fouriercoefficients_bad_primes}, the
      choice of representative $\posvec_{(p^mn\lambda)/p}$ does not matter for the
      value of the Gauss sum \(g_p\), meaning we may choose
      $p^{m-1}n \posvec_{\lambda}$. As a consequence, the Gauss sum in (II) collapses
      to
      \[
		g_p\left[1,{_-\chi_p}^{R_{p}}, (p^mn)^2 t - \qq(p \cdot \posvec_{(p^mn\lambda)/p}))\right] = g_p\left[1,{_-\chi_p}^{R_{p}}, 0 \right].
      \]
      It should be noted that this Gauss sum is computed in Remark~\ref{rem:gpnchih_computed}. In fact,
      we find that
      \[
		g_p\left[1,{_-\chi_p}^{R_{p}}, 0 \right] =
		\begin{cases}
          0 , & \textnormal{ if } 2 \nmid {R_{p}},\\
          p-1, & \textnormal{ if } 2 \mid {R_{p}}.
		\end{cases}
      \]
      Next, the case $m = 0$, corresponding to (I), is considered. Here,
      Remark~\ref{rem:gpnchih_computed} implies
      \[
		g_p\left[1,{_-\chi_p}^{R_{p}}, t - \qq(p \cdot \posvec_{\lambda/p})\right] =
		\begin{cases}
          \delta_{p \nmid t - \qq(p \posvec_{\lambda/p}) } p^{1/2} \left( \frac{ \qq(p \cdot \posvec_{\lambda/p}) - t}{p} \right) \epsilon_{p}, & \textnormal{ if } 2 \nmid {R_{p}},\\
          \delta_{p \mid t - \qq(p \posvec_{\lambda/p})} p-1, & \textnormal{ if } 2 \mid {R_{p}}.
		\end{cases}
      \]
      We deduce that the Gauss sum is indifferent to multiplication of the last
      argument by a square that is coprime to $p$. Multiplication by
      $(\level(L)/p^{\nu_p(\level(L))})^2$ yields that the number
      $(\level(L)/p^{\nu_p(\level(L))})^2p^2 \qq(\posvec_{\lambda/p})$ is an integer
      divisible by $p$.\footnote{Recall that we have assumed $\Df_{p}$ to be non-isotropic, implying $p^2 \nmid \level(L)$.} Hence, $\delta_{p \nmid t - \qq(p \posvec_{\lambda/p})} = \delta_{p \nmid t}$
      and this expression is already implicitly included in the Legendre symbol,
      yielding
      \[
		g_p\left[1,{_-\chi_p}^{R_{p}}, t - \qq(p \cdot \posvec_{\lambda/p})\right] =
		\begin{cases}
          p^{1/2} \left( \frac{- t}{p} \right) \epsilon_{p}, & \textnormal{ if } 2 \nmid {R_{p}},\\
          \delta_{p \mid t} p-1, & \textnormal{ if } 2 \mid {R_{p}}.
		\end{cases}
      \]
      In the next step, we investigate the index of the sum in (II). Further
      analysis of the discriminant group yields that
      $\Df_{p} = \Dftors{p}_{p} \simeq \Dftors{p}$ equals the $p$-torsion of the
      discriminant group $\Df$. As a consequence, any $p$-multiple
      $\mu \in \Dfmult{p}$ has trivial $p$-component $\mu_p$, meaning multiplication
      by $p$ acts as an automorphism on $\Dfmult{p}$, yielding
      \[
        \{p^{m}n \lambda/p + \lambda' \mid \lambda' \in \Dftors{p}\} = \{p^{m-1}n \lambda + \lambda' \mid \lambda' \in \Dftors{p}\}.
      \]
      In addition, recall that $\Dfmult{p}$ is the orthogonal complement of
      $\Dftors{p}$, so that for $\lambda' \in \Dftors{p}$
      \begin{equation}\label{eq:proof_bad_product_qdseparate}
        \qd(p^{m-1}n \lambda + \lambda') = \qd(p^{m-1}n\lambda) + \qd(\lambda')
      \end{equation}
      in case of $m > 1$ or $\lambda_p = 0$. Recall that $t \in \qd(\lambda) + \Z$ so that
      $(p^mn)^2 t - p^{2}\qq(p^{m-1}n \posvec_{\lambda}) \in p^{2} \Z$. In fact, we have
      assumed that there is no isotropic element in $\Df_{p}$, meaning we find
      that the index of the sum in (II) collapses to the case $\lambda' = 0 \in \Df$.
      \\
      Should we have $\lambda_{p} \neq 0$, only the case $m = 1$ remains. Then the
      separation in \eqref{eq:proof_bad_product_qdseparate} works when replacing $\lambda$ by $\lambda - \lambda_{p}$, shifting the
      non-trivial $p$-component to $\lambda'$ this time. Note that there will be at
      least two choices of $\lambda' \in \Dftors{p}$ then. Namely, $\lambda_{p} \neq 0$ and
      $- \lambda_{p}$. In fact, there might be even more and the number of these
      choices $\lambda' \in \Df_{p}$ will be denoted by $C(\lambda_{p}) + 1$. To advance the
      computation, the different Fourier coefficients appearing in the sum in
      (II) have to be related.\footnote{ Note that the sum in equation (II) introduces indices corresponding to various choices of $\lambda'$ that were not present in the original series $L_{(\lambda,t)}^{N}(f,s)$. } This is where the assumption of invariance
      with respect to the action of the orthogonal group comes into play. Recall
      that for $\lambda_{p}\neq 0$, we find $\qd(\lambda_{p}) \neq 0$ by assumption on the
      lattice. This implies the maximality of the local lattice $L_{p}$ which
      renders $\Df_{p}$ an $\F_{p}$ vector space. Then, mapping $\lambda_{p}$ to a
      different choice of root with the same norm defines an isometry of
      subspaces, which, by Witts extension theorem, extends to an isometry of
      $\Df_{p}$, i.e.\ an element in $\OG(\Df_{p})$. By assumption, the Fourier
      coefficients of $f$ are invariant with respect to this action so that we
      obtain $C(\lambda_{p}) + 1$ contributions from this particular coefficient and
      identify $C(\lambda_{p}) + 1$ as the size of the orbit of $\lambda_{p}$ under
      $\OG(\Df_{p})$.
      \\
      With these manipulations, (I) and (II) become
      \begin{align*}
        \Hev_{p} a(n\lambda,n^2t) 
        =\;	&  p^{k-2} \delta_{\lambda \in \Dfmult{p}} K_{L,p}   \left[\delta_{2 \mid {R_{p}}} (\delta_{p \mid t} p - 1 ) + \delta_{2 \nmid {R_{p}}} p^{1/2} \left(\frac{-t}{p}\right) \epsilon_{p} \right] \; a\left(n\lambda, n^2t \right)  \\
        +\;	&	a(pn \lambda,(pn)^{2}t) \tag{I'}
      \end{align*}
      and
      \begin{align*}
        \Hev_{p} a(p^mn\lambda,(p^mn)^2t) 
        =\;	& p^{2(k-1)} [\delta_{m > 1} + \delta_{m = 1} (C(\lambda_{p}) + 1)] a\left( p^{m-1}n \lambda , \frac{(p^mn)^2 t}{p^2} \right) \\
        +\;	& \delta_{2 \mid {R_{p}}} p^{k-2} K_{L,p} \cdot (p-1) \; a\left(p^mn\lambda, (p^mn)^2 t\right)  \\
        +\;	&	a(p^{m+1}n \lambda,(p^{m+1}n)^{2}t). \tag{II'}
      \end{align*}
      We multiply the equations above by $x^{m+1}$ and sum them to obtain
      \begin{align*}
        &\Hev_{p} \cdot H_{n}(x) x \\
        =\; & p^{2(k-1)} H_{n}(x) x^{2} + C(\lambda_{p}) p^{2(k-1)} a(n \lambda , n^2 t) x^2 \\
        &+ \delta_{2 \mid {R_{p}}} p^{k-2} K_{L,p} \left[ (p-1) H_{n}(x)x - a(n\lambda,n^2 t)x \left\{ (p-1) - \delta_{\lambda_{p} = 0} (\delta_{p \mid t}p - 1) \right\}\right] \\
        &+ \delta_{2 \nmid {R_{p}}} p^{k-2} K_{L,p}  \delta_{\lambda_{p} = 0} p^{1/2} \left(\frac{-t}{p}\right) \epsilon_{p} a(n\lambda,n^2t)x \\
        &+ H_n(x) - a(n \lambda, n^2 t). 
      \end{align*} 
      This will have to be rearranged. Beforehand, observe that the term in
      curly braces admits the following reformulation
      \begin{align*}
        (p-1) - \delta_{\lambda_{p} = 0} (\delta_{p \mid t}p - 1) 
        =\;	& (p-1) - \delta_{\lambda_{p} = 0} [(p - 1 + (\delta_{p \mid t}p - p)] \\
        =\;	& \delta_{\lambda_{p} \neq 0} (p - 1) - \delta_{\lambda_{p} = 0} p (\delta_{p \mid t} - 1) \\
        =\;	& \delta_{\lambda_{p} \neq 0} (p - 1) + \delta_{\lambda_{p} = 0} \delta_{p \nmid t} p. 
      \end{align*}
      With that transformation, we may rearrange the above equation for
      $H_{n}(x)$ to read
      \begin{align*}
        & H_{n}(x) - H_{n}(x) x\left( \Hev_{p} - \delta_{2 \mid {R_{p}}} (p-1) p^{k-2} K_{L,p}  \right) + H_{n}(x) x^{2} p^{2(k-1)} \\
        =\;	& a(n\lambda, n^2 t) \times  \left[1 + p^{k-2} K_{L,p} 
              \begin{cases}
				\delta_{\lambda_{p} \neq 0} (p - 1) + \delta_{\lambda_{p} = 0} \delta_{p \nmid t} p , & 2 \mid {R_{p}} \\ 
				- \delta_{\lambda_{p} = 0} p^{1/2} \left(\frac{-t}{p}\right) \epsilon_{p}, & 2 \nmid {R_{p}}
              \end{cases} 
              \cdot x - C(\lambda_{p}) p^{2(k-1)} x^2 \right] 
      \end{align*}
      which yields the desired rational expression, once $x$ is substituted with
      $p^{-s}$:
      \begin{align*}
        &H_{n}(p^{-s})  \\
        =\; 
        &a(n\lambda, n^2 t) \cdot \frac{1 + K_{L,p}
          \begin{cases}
            \delta_{\lambda_{p} \neq 0} (1 - p^{-1}) + \delta_{\lambda_{p} = 0} \delta_{p \nmid t}  , & 2 \mid {R_{p}} \\ 
            - \delta_{\lambda_{p} = 0} p^{-1/2} \left(\frac{-t}{p}\right) \epsilon_{p}, & 2 \nmid {R_{p}}
          \end{cases} 
          \cdot p^{k-1-s} - C(\lambda_{p}) p^{2(k-1-s)}}
          {1 - \left( \Hev_{p} - \delta_{2 \mid {R_{p}}} (1-p^{-1}) p^{k-1} K_{L,p} \right) p^{-s} + p^{2(k-1-s)}}.
      \end{align*}
      It remains to establish the well-definedness of the potential denominator.
      We assumed $L_{(\lambda,t)}^{N}(f,\argdot) \neq 0$ for there is nothing to show
      otherwise. Then we may fix $n$ such that $a(n\lambda,n^2t) \neq 0$ and write
      \[
		Q(p^{-s}) = \frac{R(p^{-s},t)}{P(p^{-s})}
      \]
      for the numerator and denominator of the rational expression above. In the
      right half plane of absolute convergence of $L_{(\lambda,t)}^{N}(f,s)$, also
      $H_{n}(p^{-s})$ converges absolutely, yielding
      \[
		P(p^{-s}) H_{n}(p^{-s}) = a(n\lambda,n^2 t) \cdot R(p^{-s},t)
      \]
      in this right half plane. Note that in the above equation no factor
      vanishes, unless $R(p^{-s},t)$ does. To this end, we determine a right
      half plane in which $\vert 1 - R(p^{-s},t) \rvert < 1$. The first factor
      that needs to be tended to is $K_{L,p}$. By~\cite[Prop.~3.8]{ScheithauerWeilRep} we find
      \[
        \lvert K_{L,p} \rvert = p^{-\rk(L)/2} \lvert G_{L,p}(1,1) \rvert \leq \sqrt{\lvert \Dftors{p} \rvert}.
      \]
      Recall that in our case the local lattice $L_{p}$ was assumed to be
      maximal, implying $\Dftors{p} \simeq \Df_{p}$ which results in its size being
      equal to $p^{\rk(\Df_{p})}$. Also note that $C(\lambda_{p}) = 0$ in case
      $\lambda_{p} = 0$. If $\lambda_{p} \neq 0$, we necessarily find $p \mid \level(L)$ and
      conclude that
      $C(\lambda_{p}) \leq \lvert \Df_{p} \rvert - 2 = p^{\rk(\Df_{p})} - 2$. We first
      examine the event of $2 \mid {R_{p}}$ which yields
      \begin{align*}
        \lvert 1 - R(p^{-s},t) \rvert \leq 
        \begin{cases}
          \left( p^{\rk(\Df_{p})/2} (1 - p^{-1}) + [p^{\rk(\Df_{p})} - 2] p^{k-1-\Re(s)} \right) p^{k-1-\Re(s)}, 	& \lambda_p \neq 0,\\
          p^{\rk(\Df_{p})/2+k-1-\Re(s)}, 	& \lambda_p = 0.
        \end{cases}
      \end{align*}
      Next, the case $2 \nmid {R_{p}}$ is regarded resulting in
      \begin{align*}
        \lvert 1 - R(p^{-s},t) \rvert \leq 
        \begin{cases}
          [p^{\rk(\Df_{p})} - 2] p^{2(k-1-\Re(s))}, 	& \lambda_p \neq 0, \\
          p^{\rk(\Df_{p})/2+ k-3/2-\Re(s)}, 	& \lambda_p = 0.
        \end{cases}
      \end{align*}
      As a consequence, nonvanishing of \(R(p^{-s},t)\) is guarenteed for
      $\Re(s) > \rk_{p}(\Df)/2 + k - 1$.
    \end{proof}

	Synthesising the preceding results yields a complete product expansion.

	\begin{thm}\label{cor:Lell_complete_product}
      Assume $L$ to be maximal and select a simultaneous eigenform
      $f \in \CF_{L,k}$ of all $\Hophk(p^2)$ with eigenvalue $\Hev_{p}$. Write
      \[
		f(\tau) = \sum_{\mu \in \Df} \sum_{n \in \qd(\mu) + \Z} a(\mu,n) \cdot e(n\tau)\efr_{\mu}
      \]
      for its Fourier expansion and select an index $(\lambda,t)$ such that for all
      primes $p$ we have that if $p^2 \mid \level(L)t$, then $\lambda_{p} \neq 0$ and~$f$ is
      invariant with respect to the action of $\OG(\Df_{p})$. We have
      \begin{align*}
        &L_{(\lambda,t)}(f,s)  \\
        \overset{\textnormal{def}}{=}\; 	
        & \sum_{n \in \N} \frac{a(n\lambda,n^2t)}{n^s} \\
        =\;	& \begin{cases}
          \sum_{n \in \N} \frac{a(2^n\lambda,2^nt)}{2^{ns}}, 	& 2 \mid \level(L), \\
          a(\lambda,t),										& 2 \nmid \level(L)
        \end{cases}  \\
        &\cdot \prod_{p \nmid \level(L)}
          \begin{cases} 
            \frac{1 + \delta_{p \nmid t} \frac{G_{\Df}(1)}{G_{\Df}(p)} \cdot p^{k-1-s} }{ 1 - \left( \frac{\Hev_{p}}{p^{k-1}} - (1-p^{-1}) \frac{G_{\Df}(1)}{G_{\Df}(p)} \right)  \cdot  p^{k-1-s} + p^{2(k-1-s)} } 	&,	2 \mid \rk(L), \\
            \frac{1 - \chi_{\Df}(p) \left( \frac{-t}{p} \right) p^{-1/2} \cdot p^{k-1-s} }{ 1 - \frac{\Hev_{p}}{p^{k-1}} \cdot p^{k-1-s} + p^{2(k-1-s)}} 	&,	2 \nmid \rk(L) 
          \end{cases}\\
        &\cdot \prod_{2 \neq p \mid \level(L)}
          \frac{1 + K_{L,p} 
          \begin{cases}
            \delta_{\lambda_{p} \neq 0} (1 - p^{-1}) + \delta_{\lambda_{p} = 0} \delta_{p \nmid t}  , & 2 \mid {R_{p}} \\ 
            - \delta_{\lambda_{p} = 0} p^{-1/2} \left(\frac{-t}{p}\right) \epsilon_{p}, & 2 \nmid {R_{p}}
          \end{cases} 
          \cdot p^{k-1-s} + C(\lambda_{p}) \cdot p^{2(k-1-s)}}
          {1 - \left( \frac{\Hev_{p}}{p^{k-1}} - \delta_{2 \mid {R_{p}}} (1-p^{-1}) K_{L,p} \right) p^{k-1-s} + p^{2(k-1-s)}}.
      \end{align*}
      The product converges absolutely for $\Re(s) > k + 1$ as long as the factors
      in the finite product are well defined. The latter is true for
      $\Re(s) > \max\{ \rk(\Df_{p}) \}/2 + k - 1$ and in this range all rational
      factors in $p^{-s}$ are different from zero.
    \end{thm}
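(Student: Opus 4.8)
The plan is to factor the Dirichlet series $\sum_{n} a(n\lambda,n^2 t)\,n^{-s}$ into local pieces at each rational prime, treating good primes $p \nmid \level(L)$, odd bad primes $2 \neq p \mid \level(L)$, and the prime $2$ by the three already-available mechanisms. Before invoking them I would check that their hypotheses hold for the chosen index $(\lambda,t)$. At a good prime $p$ the $p$-part $\Df_p$ is trivial, so $\lambda_p = 0$ is forced; the standing assumption that $p^2 \mid \level(L)t$ implies $\lambda_p \neq 0$ then excludes $p^2 \mid \level(L)t$, i.e.\ $p^2 \nmid t$, which is precisely the hypothesis under which Proposition~\ref{prop:VVMF_Lseries_eigenform_extract_factor} and Corollary~\ref{cor:Lseries_full_product_good} apply. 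At an odd bad prime the latter proposition needs $\Df_p$ to carry no nonzero isotropic vector; here I would invoke maximality of $L$, since an isotropic vector in $\Df_p$ would permit a $p$-integral enlargement of $L$, contradicting maximality. The remaining hypotheses, namely $\lambda_p \neq 0$ together with $\OG(\Df_p)$-invariance whenever $p^2 \mid \level(L)t$, are assumed in the statement.

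With the hypotheses secured, the first step is to dispatch the good primes: Corollary~\ref{cor:Lseries_full_product_good} already carries out the limiting extraction over all $p \nmid \level(L)$, producing the stated absolutely convergent Euler product (for $\Re(s) > k+1$, and nonvanishing) and leaving behind the $\level(L)$-supported series $D(s) = \sum_{n \mid \level(L)^\infty} a(n\lambda, n^2 t)\,n^{-s}$ as the leading factor.

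Next I would peel the odd bad primes off $D(s)$ one at a time. Regarding $D(s)$ as the series with all good-prime contributions already removed, a single application of Proposition~\ref{prop:VVMF_Lseries_eigenform_extract_factor_alloddprimes} at an odd $p \mid \level(L)$ extracts the corresponding rational factor and passes to the series supported on $n$ coprime to $p$ as well. Iterating over the finitely many odd primes dividing $\level(L)$ strips all of them, and the residual series is supported on powers of $2$. This produces the dichotomy of the statement: if $2 \nmid \level(L)$ only the term $n=1$ survives, giving $a(\lambda,t)$, whereas if $2 \mid \level(L)$ the residue is the Dirichlet series over powers of $2$.

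Finally, for convergence and nonvanishing I would assemble the three ranges. The good-prime product converges absolutely and is nonzero for $\Re(s) > k+1$ by Corollary~\ref{cor:Lseries_full_product_good}; the finitely many odd-bad factors are well defined and nonzero for $\Re(s) > \rk(\Df_p)/2 + k - 1$ by Proposition~\ref{prop:VVMF_Lseries_eigenform_extract_factor_alloddprimes}, and taking the maximum over the finite set of bad primes yields the stated threshold $\max\{\rk(\Df_p)\}/2 + k - 1$; the $2$-part is either a single term or a Dirichlet series supported on powers of $2$, whose region of absolute convergence from Corollary~\ref{cor:LellNfsabsoluteconvergence} is weaker than $\Re(s) > k+1$ and so imposes no further restriction. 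I expect the main obstacle to be the bookkeeping that justifies interleaving the limiting (infinite) good-prime extraction with the finite odd-bad extraction --- concretely, that the order of extraction is irrelevant and that the surviving coefficients are exactly the multiplicative building blocks attached to the remaining primes --- together with the verification that maximality supplies the anisotropy hypothesis at every odd bad prime.
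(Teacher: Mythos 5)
Your proposal is correct and follows exactly the route the paper intends: the paper offers no separate argument beyond ``synthesising the preceding results,'' i.e.\ applying Corollary~\ref{cor:Lseries_full_product_good} at the good primes and Proposition~\ref{prop:VVMF_Lseries_eigenform_extract_factor_alloddprimes} at the odd bad primes, leaving the $2$-supported (or trivial) leading series, with the convergence and nonvanishing ranges inherited from those two results. Your verification of the hypotheses (triviality of $\Df_p$ at good primes forcing $p^2\nmid\level(L)t$ there, and maximality of $L$ yielding anisotropy of $\Df_p$ at odd bad primes) is precisely the glue the paper leaves implicit.
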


	Do note that in the range of absolute convergence specified above, the
    $L$-series vanishes, if, and only if, $\sum_{n \in \N} a(2^n\lambda,2^nt)/2^{ns} = 0$
    in case $2 \mid \level(L)$ or $a(\lambda,t) = 0$ in case $2 \nmid \level(L)$.


	\printbibliography
\end{document}